\documentclass[12pt]{amsart}
\usepackage{a4wide,enumerate,color}
\usepackage{scrextend} 
\allowdisplaybreaks

\let\pa\partial  
\let\na\nabla  
\let\eps\varepsilon   
  
\newcommand{\R}{{\mathbb R}} 
\newcommand{\diver}{\operatorname{div}}

\newcommand{\dom}{{\mathcal G}}
\newcommand{\T}{{\mathcal T}}

\newtheorem{theorem}{Theorem}   
\newtheorem{lemma}[theorem]{Lemma}   
\newtheorem{proposition}[theorem]{Proposition}   
\newtheorem{remark}[theorem]{Remark}   
  
\newtheorem{definition}{Definition}

\def\Xint#1{\mathchoice
{\XXint\displaystyle\textstyle{#1}}%
{\XXint\textstyle\scriptstyle{#1}}%
{\XXint\scriptstyle\scriptscriptstyle{#1}}%
{\XXint\scriptscriptstyle\scriptscriptstyle{#1}}%
\!\int}
\def\XXint#1#2#3{{\setbox0=\hbox{$#1{#2#3}{\int}$ }
\vcenter{\hbox{$#2#3$ }}\kern-.58\wd0}}

\def\dashint{\Xint-}

 
\begin{document}  

\title[Homogenization of cross-diffusion systems]{Homogenization of degenerate
cross-diffusion systems}

\author{Ansgar J\"ungel}
\address{Institute for Analysis and Scientific Computing, Vienna University of  
	Technology, Wiedner Hauptstra\ss e 8--10, 1040 Wien, Austria}
\email{juengel@tuwien.ac.at} 

\author{Mariya Ptashnyk}
\address{Department of Mathematics, School of Mathematical and Computer Sciences, Heriot-Watt University, EH14 4AS Edinburgh, 
Scotland, United Kingdom.}
\email{m.ptashnyk@hw.ac.uk} 

\date{\today}

\thanks{AJ acknowledges partial support from   
the Austrian Science Fund (FWF), grants P27352, P30000, F65, and W1245, and    
the Austria-Croatia Program of the Austrian Exchange Service (\"OAD)} 

\begin{abstract}
Two-scale homogenization limits of parabolic cross-diffusion systems in a
heterogeneous medium with no-flux boundary conditions are proved. The heterogeneity
of the medium is reflected in the diffusion coefficients or by the perforated
domain. The diffusion matrix is of degenerate type and 
may be neither symmetric nor positive semi-definite, 
but the diffusion system is assumed to satisfy an entropy structure. Uniform estimates
are derived from the entropy production inequality. New estimates on the
equicontinuity with respect to the time variable 
ensure the strong convergence of a sequence of solutions 
to the microscopic problems defined in perforated domains. 
\end{abstract}

\keywords{Periodic homogenization, strongly coupled parabolic systems,
two-scale convergence, perforated domain, entropy method}
 
\subjclass[2000]{35B27, 35K51, 35K59, 35K65}  

\maketitle


\section{Introduction}

Multicomponent systems are ubiquitous in nature; examples are as various as 
gas mixtures, bacterial colonies, lithium-ion battery cells, and animal crowds.
On the diffusive level, these systems can be described by cross-diffusion 
equations taking into account multicomponent diffusion and reaction \cite{Jue16}. 
When the mass transport occurs in a domain with periodic microstructure or
in a porous medium, macroscopic models can be derived from the microscopic  
description of the processes by homogenization techniques.
In this paper, we consider cross-diffusion systems
defined in a heterogeneous medium, where the heterogeneity is reflected in 
spatially periodic diffusion coefficients or by the perforated domain. 
The corresponding macroscopic equations
are derived by combining, for the first time, two-scale convergence techniques 
and entropy methods.

The problem of reducing a heterogenous material to a homogenous one has been
investigated in the literature since many decades. The research started in the
19th century by Maxwell and Rayleigh and was developed later by engineers leading
to asymptotic expansion techniques. Homogenization became a topic in mathematics
in the 1960s and 1970s. For instance, the $\Gamma$-convergence was introduced 
by De Giorgi \cite{DeG75} with the aim to describe the
asymptotic behavior of functionals and their minimizers.
The $G$-convergence of Spagnola \cite{Spa68} and its generalization to nonsymmetric 
problems, the $H$-convergence of Tartar and Murat \cite{MuTa77}, are related to the 
convergence of the Green kernel of the corresponding elliptic operator.
The two-scale convergence \cite{All92, Ngu89} combines formal asymptotic expansion 
and test function methods. Nguetseng introduced an extension of two-scale 
convergence to almost periodic homogenization, called $\Sigma$-convergence 
\cite{Ngu03,Ngu04}. Another extension concerns the two-scale convergence
in spaces of differentiable functions \cite{Vis04}, which is important
in nonlinear problems \cite{MiTi07}.
A classical reference for the homogenization theory of 
periodic structures is \cite{BLP78}. 

In spite of the huge amount of literature on homogenization problems, there
are not many studies on the homogenization of nonlinear parabolic systems.
Most of the results concern weakly coupled equations like periodic homogenization
of reaction-diffusion systems or of thermal-diffusion equations
in periodically perforated domains \cite{AlHu15,CHNN15,PtSe16}.
Particular cross-diffusion systems -- of triangular type -- were
investigated in \cite{KAM14}. However, up to our knowledge, there are no results
on more general cross-diffusion systems. 

In this paper, we investigate strongly coupled parabolic cross-diffusion systems
with a formal gradient-flow or entropy structure by combining two-scale convergence 
and the bounded\-ness-by-entropy method \cite{Jue15}. 
The difficulty is the handling of the degenerate structure of the equations.
We investigate two classes of degeneracies: a local one of porous-medium type
and a nonlocal one; see Section \ref{sec.main} for details.

The paper is organized as follows. In Section \ref{sec.main}, the microscopic
models are formulated and the main results are stated. 
The main theorems are proved in Sections \ref{sec.thm1} and \ref{sec_proof2}.
For the convenience of the reader, the definition and some properties
of two-scale convergence are recalled in Appendix \ref{app}. 
 The technical Lemma \ref{Lemma_entropy} is proved in Appendix \ref{sec.lemma}.
Finally, two cross-diffusion systems from applications which satisfy our assumptions
are presented in Appendix \ref{app.ex}.


\section{Formulation of the microscopic models and main results}\label{sec.main}

We investigate two types of homogenization problems. The first homogenization
limit is performed in cross-diffusion systems with spatially periodic coefficients, 
\begin{equation}\label{1.eq}
  \pa_t u_i^\eps - \diver\bigg(\sum_{j=1}^n
	P\bigg(\frac{x}{\eps}\bigg)a_{ij}(u^\eps)\na u_j^\eps\bigg)
	= f_i(u^\eps) \quad  \text{in } \Omega, \ t>0, \ i=1,\ldots,n,
\end{equation}
in a bounded domain $x\in\Omega\subset\R^d$ ($d\ge 1$), together with
no-flux boundary and initial conditions
\begin{equation}\label{1.bic}
  \sum_{j=1}^n P\bigg(\frac{x}{\eps}\bigg)a_{ij}(u^\eps)
	\na u_j^\eps\cdot\nu = 0\mbox{ on }\pa\Omega,\;  t>0, \quad
	u_i^\eps(0)=u_i^0  \; \mbox{ in }\Omega.
\end{equation}
Here, $u^\eps=(u_1^\eps,\ldots,u_n^\eps)$ is the vector of concentrations or
mass fractions of the species depending on the spatial variable $x\in\Omega$
and on time $t>0$, and $\eps>0$ is a characteristic length scale. Furthermore,
$P(y) = {\rm diag}(P_1(y),\ldots,P_d(y))$ is a diagonal matrix, where 
the periodic functions $P_j:Y\to\R$ describe the heterogeneity of the
medium and $Y=(0,b_1)\times\cdots\times(0,b_d)$ with $b_i>0$ is the 
``periodicity cell'',
$a_{ij}:\R^n\to\R$ are the density-dependent diffusion coefficients, 
$f_i:\R^n\to\R$ models the reactions, and $\nu(x)$ is the exterior unit
normal vector to $\pa\Omega$. The theory works also for reaction terms depending 
on $x/\eps$, but we do not consider this dependence to simplify the presentation.
 The divergence operator is understood in the following sense:
$$
  \diver\bigg(\sum_{j=1}^n P\bigg(\frac{x}{\eps}\bigg)a_{ij}(u^\eps)\na u_j^\eps
  \bigg) = \sum_{k=1}^d \frac{\pa}{\pa x_k}\bigg(\sum_{j=1}^n 
	P_k\bigg(\frac{x}{\eps}\bigg)a_{ij}(u^\eps)\frac{\pa u_j^\eps}{\pa x_k}\bigg).
$$
The second homogenization limit is shown in cross-diffusion systems solved 
in a perforated domain. A perforated
domain $\Omega^\eps$ is obtained by removing a subset $\Omega_0^\eps$ 
from $\Omega$, which gives $\Omega^\eps=\Omega\backslash\Omega_0^\eps$. 
The set $\Omega_0^\eps$ may consist of periodically distributed holes 
in the original domain. More precisely,
we introduce the reference set $Y\subset\R^d$ and the set $Y_0\subset Y$ 
(the reference hole)
with Lipschitz boundary $\Gamma=\pa Y_0$, 
satisfying $\overline{Y}_0\subset Y$, and $Y_1 = Y \setminus \overline{Y}_0$.
Then $\Omega_0^\eps$ and the corresponding boundary are defined by
$$
  \Omega_0^\eps = \bigcup_{\xi\in\Xi^\eps}\eps(Y_0+\xi), \quad
	\Gamma^\eps = \bigcup_{\xi\in\Xi^\eps}\eps(\Gamma+\xi),
$$	
where $\Xi^\eps=\{\xi\in\R^d:\eps(\overline{Y}+\xi)\subset\Omega\}$,
and the microscopic model in the perforated domain $\Omega^\eps$ reads as
\begin{equation}\label{2.eq}
  \pa_t u_i^\eps - \diver\bigg(\sum_{j=1}^n a_{ij}(u^\eps)\na u_j^\eps\bigg)
	= f_i(u^\eps) \quad\text{in }\Omega^\eps, \ t>0, \ i=1,\ldots,n,
\end{equation}
together with the boundary and initial conditions
\begin{equation}\label{2.bic}
  \sum_{j=1}^n  a_{ij}(u^\eps) \na u_j^\eps\cdot\nu = 0
	\mbox{ on }\pa\Omega\cup \Gamma^\eps,\ t>0, \quad 
	u_i^\eps(0)=u_i^0\mbox{ in }\Omega^\eps.	
\end{equation}

A key feature of \eqref{1.eq} and \eqref{2.eq} is that the diffusion matrix 
$A(u)=(a_{ij}(u))$ is generally neither symmetric nor positive semi-definite; 
see \cite{Jue15,Jue16} for examples from applications in physics and biology. 
Two examples are presented in Appendix \ref{app.ex}.
To ensure the global existence of weak solutions of problem \eqref{1.eq}-\eqref{1.bic} 
or \eqref{2.eq}-\eqref{2.bic}, we assume that the diffusion system has an
{\em entropy structure}, i.e., there exists a convex function $h\in C^2(\dom;\R)$ with 
$\dom\subset\R^n$ such that the matrix product $h''(u)A(u)$, where $h''(u)$ 
denotes the Hessian of $h$, is positive semi-definite.
Then the so-called entropy $H(u)=\int_\Omega h(u)dx$ is a Lyapunov functional
if $f_i\equiv 0$:
\begin{equation}\label{1.dHdt}
  \frac{dH}{dt} = -\int_\Omega\na u:h''(u)A(u)\na u dx \le 0,
\end{equation}
where ``:'' denotes the Frobenius matrix product.
Gradient estimates, needed for the analysis, are obtained by making a stronger
condition on $h''(u)A(u)$ than just positive semi-definiteness. Since strict
positive definiteness cannot be expected from the applications, we assume that
$h''(u)A(u)$ is ``degenerate'' positive definite. 
We investigate two types of degeneracies, a local and a nonlocal one.

\subsection*{Locally degeneracy structure} 

We assume that $h''(u)A(u)\ge\alpha\, \mbox{diag}((u_i)^{2s_i})_{i=1}^n$ 
in the sense of symmetric matrices
and with $\alpha>0$, $s_i>-1$. Then \eqref{1.dHdt} becomes (still with $f_i\equiv 0$)
$$
  \frac{dH}{dt} + \alpha\sum_{i=1}^n\int_\Omega u_i^{2s_i}|\na u_i|^2 dx \le 0,
$$
leading to $L^2$-estimates for $\na u_i^{s_i+1}$.
Gradient estimates of such a type are well known in the analysis of the porous-medium
equation. The analysis requires a further assumption: The domain $\dom$ is bounded and 
the derivative $h':\dom\to\R^n$ is invertible. Examples are
Boltzmann-type entropies containing expressions like $u_i\log u_i$.
As shown in \cite{Jue15}, this leads to $u_i(x,t)\in\overline{\dom}$ for
$x\in\Omega$, $t>0$, and hence to $L^\infty$-estimates for $u_i$ (without the
use of a maximum principle). 
Using a nonlinear Aubin-Lions lemma, the global existence of bounded weak
solutions was proved in \cite{Jue15} under the condition that the domain $\dom$ is
bounded. Even when $\dom$ is not bounded, the entropy method can be applied,
giving global weak solutions (but possibly not bounded) \cite[Section 4.5]{Jue16}.

\subsection*{Nonlocally degeneracy structure}
 
As an example of a nonlocally degenerate structure, we consider
cross-diffusion systems with coefficients
\begin{equation}\label{1.nonloc}
  a_{ij}(u) = D_i(\delta_{ij}u_{n+1}+u_i), \quad i,j=1,\ldots,n,
\end{equation}
where $\delta_{ij}$ is the Kronecker delta symbol, $u_{n+1}=1-\sum_{i=1}^nu_i$,
and $D_i>0$ for $i=1,\ldots,n$ are diffusion coefficients. 
Such models are used for the transport of ions
through biological channels, where $u_i$ are the ion volume fractions
and $u_{n+1}$ is the solvent concentration. The entropy density is given by
\begin{equation}\label{1.h}
  h(u) = \sum_{i=1}^{n+1}u_i(\log u_i-1)\quad\mbox{for }u=(u_1,\ldots,u_n)\in\dom,
\end{equation}
where $\dom=\{(u_1,\ldots,u_n)\in\R^n:u_1,\ldots,u_n,u_{n+1}>0\}$.
Then $h''(u)A(u)$ is positive semi-definite and if $f_i\equiv 0$,  it holds that
(see \cite[Section 4.6]{Jue16} and \cite[Theorem 1]{ZaJu17})
$$
  \frac{dH}{dt} + \sum_{i=1}^n D_i
	\int_\Omega\big(u_{n+1}|\na u_i^{1/2}|^2 + |\na u_{n+1}|^2\big)dx \le 0.
$$
This gives an $L^2$-estimate for $\na u_{n+1}$, but
generally not for $\na u_i$ because of the factor $u_{n+1}$
which may vanish. We call this
a nonlocal degeneracy since the degeneracy $u_{n+1}$ depends on $u_i$ in
a nonlocal way through the other components $u_j$ for $j\neq i$.

We note that our results can be extended to more general coefficients of the form
$$
  a_{ij}(u) = sD_i u_i^{s-1} q(u_{n+1})\delta_{ij} + D_i u_i^s q'(u_{n+1}),
	\quad i,j=1,\ldots,n,
$$
where $s=1$ or $s=2$ and $q\in C^2([0,1])$ is a positive and nondecreasing function
satisfying $q(0)=0$ and $q'(\xi)\ge\gamma q(\xi)$ for some $\gamma>0$ and
all $\xi\in(0,1)$.

\bigskip
To prove the convergence of solutions of the microscopic problems to a solution of 
the corresponding macroscopic equations, we derive some a priori estimates for 
$(u_i^\eps)$ independent of $\eps$. Compared to \cite{Jue15}, the main novelty
is the derivation of equicontinuous estimates for $(u_i^\eps)$ with respect to 
the time variable. 
This will allow us to obtain compactness properties for a sequence of solutions of 
the microscopic problem defined in a perforated domain. Notice that estimates 
for a discrete time derivative of $(u^\eps)$ in $L^2(0,T; H^1(\Omega^\eps)')$
do not ensure a priori estimates uniform in $\eps$ for the discrete time derivative 
for an extension of $u^\eps$ from $\Omega^\eps$ into $\Omega$. Another important step 
of the analysis presented here is the proof of an existence result for the 
degenerate unit-cell problem, which determines the macroscopic diffusion matrix.   
Here, we apply a regularization technique and use the structure and assumptions 
on the matrix $A(u)$. 

For the first main result on locally degenerate systems, we impose
the following assumptions:
\begin{labeling}{A11}
\item[{\bf A1.}] Entropy: There exists a convex function $h\in C^2(\dom;\R)$
such that $h':\dom\to\R^n$ is invertible, where $\dom\subset(0,1)^n$ is open
and $n\ge 1$.

\item[{\bf A2.}] ``Degenerate'' positive definiteness: There exist numbers $s_i > -1$
($i=1,\ldots,n$) and $\alpha>0$
such that for $z=(z_1,\ldots,z_n)\in\R^n$, $u=(u_1,\ldots,u_n)\in\dom$,
$$
  z^\top  h^{\prime\prime}(u)A(u)z \ge \alpha\sum_{i=1}^n |u_i|^{2s_i} z_i^2. 
$$
\item[{\bf A3.}] Diffusion coefficients: Let  
$A(u)=(a_{ij}(u))\in C^0(\dom;\R^{n\times n})$. 
There exists a constant $C_A>0$ such that for all $u\in\dom$ and for those 
$j=1,\ldots,n$ such that $s_j>0$, it holds that 
$$
  |a_{ij}(u)|\le C_A u_j^{s_j} \quad\mbox{for }i=1,\ldots,n.
$$
Furthermore, $P\in L^\infty(Y;\R^{d\times d})$ with 
$P(y) ={\rm diag}(P_1(y),\ldots,P_d(y))$ satisfies $P_i(y)\ge d_0>0$ in
$Y$ for some $d_0>0$ and for all $i=1, \ldots, d$.

\item[{\bf A4.}] Reaction terms: $f\in C^0(\overline\dom;\R^n)$ 
and there exists $C_f>0$ such that $f(u)\cdot h'(u)\le C_f(1+h(u))$ for $u\in\dom$.

\item[{\bf A5.}] Initial datum:  $u^0:\Omega\to\R^n$ is measurable and 
$u^0(x)\in\dom$ for $x\in\Omega$.

\item[{\bf A6.}] Bound for the matrix $h''(u)A(u)$: There exists a constant
$C>0$ such that for all $u\in\dom$ and $i,j=1,\ldots,n$,
$$
  (h''(u)A(u))_{ij} \le Cu_i^{s_i}u_j^{s_j}.
$$
\end{labeling}

Let us discuss these assumptions.
As mentioned above, Assumption A1 guarantees the $L^\infty$ boundedness of
the solutions. 
Assumption A2 is needed for the compactness argument. For the existence analysis,
it can be weakened to continuous functions instead of power-law functions
\cite{Mou16},  but the convergence $\eps\to 0$ is more delicate.
The growth estimate
for $a_{ij}(u)$ in Assumption~A3 is crucial for  the proof of the 
equicontinuity property with respect to the time variable. 
The growth condition on $f_i$ in Assumption A4
allows us to handle the reaction terms. The latter condition generally rules out
quadratic growth of the concentrations; we refer to
\cite{FLS16} for reaction-diffusion systems with diagonal diffusion matrices but
quadratic reaction terms. Assumption A5 guarantees that the initial datum is
bounded; it can be relaxed to $u^0(x)\in\overline{\dom}$.
Finally, Assumption A6 is a technical condition
to ensure the solvability of the unit-cell problems. 
In Appendix \ref{app.ex}, we give two examples
from applications, for which the assumptions are satisfied.

To simplify the presentation, we introduce some notation:
$$
  P_k^\eps(x) = P_k(x/\eps)\mbox{ for }x\in\Omega,\ k = 1, \ldots, d, \quad
	\Omega_T = \Omega\times(0,T), \quad \Omega^\eps_T = \Omega^\eps\times(0,T).
$$

\begin{definition} 
A weak solution of problem \eqref{1.eq}-\eqref{1.bic} is a function  $u^\eps \in 
L^\infty(0, T; L^\infty(\Omega;\R^n))$ with 
$(u^\eps_i)^{s_i+1} \in L^2(0,T; H^1(\Omega))$ and $\partial_t u^\eps_i 
\in L^2(0,T; H^1(\Omega)')$  for $i=1, \ldots, n$, satisfying 
$$
   \int_0^T\sum_{i=1}^n\langle\pa_t u_i^\eps,\varphi_i\rangle  dt
	+ \int_0^T\int_\Omega \bigg(\sum_{i,j=1}^n P^\eps(x)a_{ij}(u^\eps)
	\na u_j^\eps\cdot\na \varphi_i- \sum_{i=1}^n f_i(u^\eps)\varphi_i \bigg)dxdt = 0,  
$$
for all $\varphi \in L^2(0, T; H^1(\Omega;\R^n))$, and the initial conditions 
are satisfied in the $L^2$ sense. 

A weak solution of problem \eqref{2.eq}-\eqref{2.bic} 
is defined in a similar way by replacing $\Omega$ by $\Omega^\eps$. 
\end{definition} 
Here, $\langle \psi, \varphi \rangle$ denotes the dual product between 
$\psi \in H^1(\Omega)'$ and $\varphi \in H^1(\Omega)$  and the expression
$P^\eps\na u_j^\eps\cdot\na\varphi_i$ is the sum
$\sum_{k=1}^d P_k^\eps\pa_{x_k}u_j^\eps \pa_{x_k}\varphi_i$.

\begin{theorem}[Homogenization limit for problems with local degeneracy]\label{thm1}
Let Assumptions A1-A6 hold.

{\rm (i)} Let $u^\eps$ be a weak solution of the microscopic system
\eqref{1.eq}-\eqref{1.bic}. 
Then there exists a subsequence of $(u^\eps)$, which is not relabeled, 
such that $u^\eps\to u$ strongly in $L^p(\Omega_T;\R^n)$ 
for all $p<\infty$  as $\eps\to 0$, and the limit function 
$u \in L^\infty(0, T; L^\infty(\Omega;\R^n))$, with 
$u_i^{s_i+1}\in L^2(0,T;H^1(\Omega))$ and    
$\pa_t u_i \in  L^2(0,T; H^1(\Omega)')$ for $i=1,\ldots, n$,
solves the macroscopic system
\begin{equation}\label{macro_1}
\begin{aligned}
  & \pa_t u_i - \sum_{k,m=1}^d\sum_{\ell=1}^n \frac{\pa}{\pa x_m}\bigg(
	B^{i\ell}_{mk}(u)\frac{\pa u_\ell}{\pa x_k}\bigg) = f_i(u) \quad\mbox{in }
	\Omega,\ t>0, \ i=1,\ldots,n, \\
  & \sum_{k,m=1}^d\sum_{\ell=1}^n \nu_m
	B^{i\ell}_{mk}(u)\frac{\pa u_\ell}{\pa x_k} = 0\mbox{ on }\pa\Omega,
	\ t>0, \quad u_i(0)=u^0_i\mbox{ in }\Omega,
\end{aligned}
\end{equation}
where $(B^{i\ell}_{mk}(u))$ is the homogenized diffusion matrix 
defined in \eqref{B}.

{\rm (ii)} Let  $u^\eps$ be a weak solution of the microscopic system 
\eqref{2.eq}-\eqref{2.bic}. Then, up to a subsequence and by identifying 
$u^\eps$ with its extension from $\Omega^\eps$ into $\Omega$,  $u^\eps\to u$ 
strongly in $L^p(\Omega_T;\R^n)$ for $p<\infty$, 
where $u$, with $u_i^{s_i+1}\in L^2(0,T;H^1(\Omega))$   
and $\pa_t u_i \in  L^2(0,T; H^1(\Omega)')$ for $i=1,\ldots,n$, 
is a solution of \eqref{macro_1} 
with the macroscopic diffusion matrix $(B^{i\ell}_{mk}(u))$  defined in \eqref{B_2}. 
\end{theorem}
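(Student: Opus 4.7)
The plan is to follow the standard two-scale convergence scheme, adapted to the degenerate entropy structure, in three conceptual steps: (1) derive $\eps$-uniform a priori bounds from the entropy production inequality; (2) establish equicontinuity in time so as to upgrade the weak two-scale convergence to strong $L^p$ convergence; (3) pass to the limit in the weak formulation by resolving a degenerate unit-cell problem that identifies the effective matrices $(B^{i\ell}_{mk}(u))$ in \eqref{B} and \eqref{B_2}.

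For step (1), I would test the weak formulation with $h'(u^\eps)$. Using A2 to absorb the diffusion term and A4 together with Gronwall's lemma to control the reaction contribution, this yields an entropy estimate of the form
$$
H(u^\eps(t)) + \alpha\sum_{i=1}^n\int_0^t\!\!\int_\Omega P^\eps(x)|u_i^\eps|^{2s_i}|\na u_i^\eps|^2\,dxds
\le C(H(u^0),T).
$$
Since $\dom\subset(0,1)^n$, Assumption A1 gives $\|u_i^\eps\|_{L^\infty}\le 1$, and the dissipation bound upgrades to $\|(u_i^\eps)^{s_i+1}\|_{L^2(0,T;H^1)}\le C$ uniformly in $\eps$. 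In case (ii) these bounds are first established on $\Omega^\eps$, and then transferred to $\Omega$ by a standard extension operator whose norm is uniform in $\eps$.

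For step (2), the novel point is the equicontinuity in time. Rather than bounding $\pa_t u^\eps$ in $L^2(0,T;H^1(\Omega^\eps)')$ (an estimate that does not survive extension from $\Omega^\eps$ to $\Omega$), I would control $\int_0^{T-\tau}\|u^\eps(t+\tau,\cdot)-u^\eps(t,\cdot)\|_{L^2}^2\,dt$ directly, using the PDE integrated in time together with A3: the growth $|a_{ij}(u)|\le C_A u_j^{s_j}$ combined with the $L^2$-bound on $(u_j^\eps)^{s_j}\na u_j^\eps$ controls the fluxes in a convenient space. Together with the spatial compactness of $((u_i^\eps)^{s_i+1})$ furnished by the gradient bound, a Kolmogorov--Riesz--Fréchet argument gives strong convergence of a subsequence of $(u_i^\eps)^{s_i+1}$ in $L^2(\Omega_T)$; composing with the continuous inverse of $t\mapsto t^{s_i+1}$ and using the $L^\infty$ bound, this upgrades to $u^\eps\to u$ in $L^p(\Omega_T;\R^n)$ for every $p<\infty$. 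In case (ii), the same argument is applied to the extended sequence.

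For step (3), along a further subsequence the two-scale limits yield $u^\eps\to u$ and $\na u_i^\eps\rightharpoonup\na u_i+\na_y U_i^1$ in the two-scale sense, with $U_i^1\in L^2(\Omega_T;H^1_{\mathrm{per}}(Y)/\R)$ in case (i), and $U_i^1\in L^2(\Omega_T;H^1_{\mathrm{per}}(Y_1)/\R)$ in case (ii) because the extension has vanishing two-scale gradient inside $Y_0$. Testing with $\varphi_i(t,x)+\eps\psi_i(t,x,x/\eps)$ and passing to the two-scale limit decouples into the macroscopic equation and a linear cell problem for $U^1$ driven by $\na u$; after inserting the cell solutions, linearity in $\na u$ produces the claimed matrices in \eqref{B} and \eqref{B_2}. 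The strong $L^p$ convergence from step (2) justifies passing to the limit in $a_{ij}(u^\eps)$ and in $f_i(u^\eps)$ (here A3 and A4 ensure equi-integrability), and the boundary and initial conditions are recovered from the corresponding two-scale limits.

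The main obstacle is the well-posedness of the degenerate unit-cell problem: the bare matrix $A(u)$ need be neither symmetric nor coercive, only $h''(u)A(u)$ enjoys the degenerate coercivity of A2. I would handle this by a regularization $A\rightsquigarrow A+\delta I$ and apply Lax--Milgram after symmetrization by $h''(u)$ to obtain a solution $U^1_\delta$ of the auxiliary cell problem; A2 provides $\delta$-independent coercivity in the weighted gradient and A6 provides the matching upper bound on the bilinear form, so a limit $\delta\to 0$ produces the desired $U^1$ and defines $B^{i\ell}_{mk}(u)$ unambiguously. In the perforated case one additionally verifies that the Neumann condition on $\Gamma$ is compatible with the extension used in step (1)–(2), which is built into the structure of $H^1_{\mathrm{per}}(Y_1)$.
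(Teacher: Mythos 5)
Your overall architecture matches the paper's: entropy-based a priori bounds, time-translate estimates to get strong $L^p$ compactness (the paper uses the test function $\phi_i=\int_{t-\tau}^t(\vartheta_\tau\cdot-\cdot)\kappa\,d\sigma$ and Simon's Aubin--Lions lemma, which is equivalent in spirit to your Kolmogorov--Riesz--Fr\'echet route), then two-scale passage to the limit with a regularized cell problem. But there are two genuine gaps.

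\textbf{Gap 1: the wrong quantity in the two-scale expansion.} In Step (3) you posit $\na u_i^\eps\rightharpoonup\na u_i+\na_y U_i^1$ two-scale. For $s_i>0$ this is not available: the entropy production only gives $u_i^{2s_i}|\na u_i^\eps|^2\in L^1$, i.e.\ $\na(u_i^\eps)^{s_i+1}\in L^2$, and $\na u_i^\eps$ itself need not be bounded in $L^2(\Omega_T)$ uniformly in $\eps$. The paper's Lemma~\ref{conver_11} takes the two-scale limit of $\na(u_i^\eps)^{s_i+1}$, and the cell ansatz is $V_j=\sum_{k,\ell}\pa_{x_k}u_\ell^{s_\ell+1}\,W_j^{k\ell}$, not $\sum\pa_{x_k}u_\ell\,W_j^{k\ell}$. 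This matters: the chain rule produces the factor $(s_\ell+1)u_\ell^{s_\ell}/((s_j+1)u_j^{s_j})$ that appears in \eqref{B} and \eqref{B_2}. With your formulation the effective tensor would come out wrong, and for $s_i>0$ the two-scale limit you invoke doesn't exist.

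\textbf{Gap 2: the regularization of the cell problem.} You regularize by $A\rightsquigarrow A+\delta I$ and then ``symmetrize by $h''(u)$.'' This runs into two obstructions that the paper's regularization avoids. First, the cell problem actually involves $\widehat A(u)=A(u)\,\mathrm{diag}((s_j+1)u_j^{s_j})^{-1}$, and the extra term $\delta\,\mathrm{diag}(u_j^{-s_j})$ blows up where $u_j\to0$ with $s_j>0$. Second, $h''(u)$ itself is generically unbounded on $\pa\dom$ (e.g.\ $1/u_i$ for Boltzmann-type entropies), so symmetrizing by $h''(u)$ is ill-defined precisely where the degeneracy sits. The paper instead regularizes the \emph{argument}: $u\mapsto u_\delta=(u+\delta/2)/(1+\delta)$, which places $u_\delta$ in a compact subset of $\dom$, keeps $h''(u_\delta)A(u_\delta)$ bounded and uniformly elliptic for each fixed $\delta$, and — after the rescaling $\widetilde W^{k\ell}_{\delta,j}=W^{k\ell}_{\delta,j}/u_{\delta,j}^{s_j}$ — yields a $\delta$-uniform $H^1$-bound on $W_\delta^{k\ell}$ via A2 (coercivity in the weighted gradient $u_\delta^{s}\na\widetilde W_\delta$) and A6 (upper bound on the forcing). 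That rescaling is the step your proposal is missing and is essential to close the estimate uniformly in $\delta$.

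\textbf{A smaller point on case (ii).} In the perforated case, the extensions of $u_i^\eps$ and of $(u_i^\eps)^{s_i+1}$ are each defined by linear operators and do not agree (as a power) on $\Omega\setminus\Omega^\eps$; you need to check that their respective limits are compatible. The paper does this explicitly (using Lemma~\ref{lem.ineq} both ways for $s_i\le0$ and $s_i>0$, plus a density argument with $\chi_{\Omega^\eps}\rightharpoonup|Y_1|/|Y|$). Your proposal simply asserts ``the same argument is applied to the extended sequence,'' which glosses over this identification.
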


For nonlocally degenerate systems \eqref{1.eq} or \eqref{2.eq}
with diffusion coefficients \eqref{1.nonloc},
the weak solution is defined in a slightly different way than usually, since
the regularity $u_i^\eps\in L^2(0,T;H^1(\Omega))$ may not hold.
We recall the definition from \cite{Jue15}.

\begin{definition}\label{def.weak}
A weak solution of \eqref{1.eq}-\eqref{1.bic} with diffusion coefficients
\eqref{1.nonloc} are functions $u_1^\eps,\ldots,u_n^\eps$ and
$u_{n+1}^\eps=1-\sum_{i=1}^n u_i^\eps$ satisfying $u_i^\eps\ge 0$, $u_{n+1}^\eps\ge 0$
in $\Omega_T$, $u_i^\eps\in L^\infty(0,T;L^\infty(\Omega))$, 
$(u_{n+1}^\eps)^{1/2}$, $(u_{n+1}^\eps )^{1/2}u_i^\eps\in
L^2(0,T;H^1(\Omega))$,
$\pa_t u_i^\eps \in  L^2(0,T; H^1(\Omega)')$ for $i=1,\ldots,n$, and
\begin{equation}\label{weak_2}
\begin{aligned}
  \int_0^T\sum_{i=1}^n \langle \pa_t u_i,\varphi_i\rangle dt 
  + &\int_0^T\int_\Omega\sum_{i=1}^n P^\eps(x)D_i(u_{n+1}^\eps)^{1/2} \\
	&{}\times
	\Big(\na\big((u_{n+1}^\eps)^{1/2}u_i^\eps\big) - 3u_i^\eps\na(u_{n+1}^\eps)^{1/2}
	\Big)\cdot\na\varphi_i dxdt = 0
\end{aligned}
\end{equation}
for all $\varphi \in L^2(0, T; H^1(\Omega;\R^n))$, 
and the initial conditions are satisfied in the $H^1(\Omega)'$ sense. 

A weak solution of problem \eqref{2.eq}-\eqref{2.bic} 
with diffusion coefficients \eqref{1.nonloc} is
defined analogously by replacing $\Omega$ by $\Omega^\eps$. 
\end{definition}

\begin{theorem}[Homogenization limit for problems with nonlocal degeneracy]
\label{main_filling}
Let Assumptions A1 and A5 hold. 

{\rm (i)} A subsequence $(u^\eps)$ of solutions of the microscopic problem 
\eqref{1.eq}-\eqref{1.bic}, with the matrix $A$ defined in \eqref{1.nonloc}, 
converges to a solution $u\in L^\infty(0,T;L^\infty(\Omega;\R^n))$, with 
$u_{n+1}^{1/2}$, $u_{n+1}^{1/2}u_i \in
L^2(0,T;H^1(\Omega))$, $\pa_t u_i \in L^2(0,T; H^1(\Omega)')$ for 
$i=1,\ldots,n$, of the macroscopic equations 
\begin{equation}\label{macro_2}
\begin{aligned}
  & \partial_t u - \diver\big(D_{\rm hom}A(u) \na u\big) = 0 && \text{ in }\Omega, 
	\ t>0, \\  
  & D_{\rm hom}A(u) \na u \cdot \nu = 0 && \text{ on } \pa\Omega, \ t>0, \quad
 u(0) = u_0 \text{ in }\Omega,
\end{aligned}
\end{equation}
where the macroscopic matrix $D_{\rm hom}$ is defined in \eqref{Dhom}.

{\rm (ii)} In the case of the microscopic problem \eqref{2.eq}-\eqref{2.bic}, 
we obtain the same macroscopic equations as in \eqref{macro_2} with a different 
macroscopic diffusion matrix given by \eqref{Dhom_perfor}. 
\end{theorem}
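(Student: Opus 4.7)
\medskip
\noindent\textbf{Proof proposal.} The plan is to combine the entropy production estimate from \cite{Jue15, ZaJu17} with two-scale convergence, exactly as in the local case of Theorem \ref{thm1}, but working with the \emph{dual} quantities $(u_{n+1}^\eps)^{1/2}$ and $(u_{n+1}^\eps)^{1/2}u_i^\eps$ whose gradients are controlled, rather than with $\nabla u_i^\eps$.

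First I would establish uniform estimates. Since $\mathcal G\subset(0,1)^n$, Assumption A1 gives $u_i^\eps\in[0,1]$ and $u_{n+1}^\eps\in[0,1]$ in $\Omega_T$ (resp.\ $\Omega_T^\eps$) for free. Testing the equation by $h'(u^\eps)$ and using $P_k^\eps\ge d_0$ together with the computation recalled after \eqref{1.h} yields, uniformly in $\eps$,
\[
\sum_{i=1}^n D_i\int_0^T\!\!\int_{\Omega\,(\text{or }\Omega^\eps)} \Big(u_{n+1}^\eps\,|\nabla(u_i^\eps)^{1/2}|^2 + |\nabla u_{n+1}^\eps|^2\Big)\,dx\,dt \le C,
\]
which translates into uniform $L^2(0,T;H^1)$-bounds for $(u_{n+1}^\eps)^{1/2}$ and for $(u_{n+1}^\eps)^{1/2}u_i^\eps$. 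Inserting $\varphi_i\in L^2(0,T;H^1)$ into \eqref{weak_2} and using Cauchy--Schwarz gives $\partial_t u_i^\eps$ bounded in $L^2(0,T;H^1(\Omega)')$ in the periodic case.

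Second, I would prove equicontinuity in time for $(u_i^\eps)$ directly, since in the perforated case the $L^2(0,T;H^1(\Omega^\eps)')$-estimate on $\partial_t u_i^\eps$ does \emph{not} survive extension to $\Omega$. Following the strategy announced by the authors, I would use $\varphi=u_i^\eps(t+\tau)-u_i^\eps(t)$ in \eqref{weak_2} and exploit that the flux factorizes through the $H^1$-controlled quantities, so that Fubini plus the $L^\infty$-bound on $u_i^\eps$ give $\|u_i^\eps(\cdot+\tau)-u_i^\eps(\cdot)\|_{L^2(0,T-\tau;L^2)}\le C\sqrt\tau$ uniformly in $\eps$. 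Together with the $L^2(0,T;H^1)$-bound on $(u_{n+1}^\eps)^{1/2}u_i^\eps$ (and an extension operator in the perforated case), Kolmogorov--Riesz then yields a subsequence with $u_i^\eps\to u_i$ and therefore $u_{n+1}^\eps\to u_{n+1}$, $(u_{n+1}^\eps)^{1/2}\to u_{n+1}^{1/2}$, $(u_{n+1}^\eps)^{1/2}u_i^\eps\to u_{n+1}^{1/2}u_i$ strongly in every $L^p(\Omega_T;\R^n)$, $p<\infty$.

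Third, I would pass to the two-scale limit in the weak formulation. The standard two-scale compactness (Appendix \ref{app}) applied to $(u_{n+1}^\eps)^{1/2}u_i^\eps$ and $(u_{n+1}^\eps)^{1/2}$ produces correctors $U_i,V\in L^2(\Omega_T;H^1_{\rm per}(Y)/\R)$ (resp.\ $H^1_{\rm per}(Y_1)/\R$) such that $\nabla((u_{n+1}^\eps)^{1/2}u_i^\eps)\rightharpoonup\nabla_x(u_{n+1}^{1/2}u_i)+\nabla_y U_i$ and $\nabla(u_{n+1}^\eps)^{1/2}\rightharpoonup\nabla_x u_{n+1}^{1/2}+\nabla_y V$ in the two-scale sense. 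Choosing test functions $\varphi_i(x,t)+\eps\psi_i(x,t,x/\eps)$ and letting $\eps\to 0$ in \eqref{weak_2} decouples into a macroscopic equation and a family of unit-cell problems for $U_i,V$, linear in the macroscopic gradients $\nabla_x(u_{n+1}^{1/2}u_i)$ and $\nabla_x u_{n+1}^{1/2}$; solving these cell problems and inserting the correctors back yields the homogenized matrix $D_{\rm hom}$ of \eqref{Dhom} (or \eqref{Dhom_perfor} on $Y_1$), and hence \eqref{macro_2}.

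The main obstacle I anticipate is the passage to the limit in the nonlinear flux $(u_{n+1}^\eps)^{1/2}[\nabla((u_{n+1}^\eps)^{1/2}u_i^\eps)-3u_i^\eps\nabla(u_{n+1}^\eps)^{1/2}]$: the outer factor $(u_{n+1}^\eps)^{1/2}$ only converges strongly thanks to the equicontinuity step above, so that step is doing the real work. For part (ii), the second obstacle is the solvability of the degenerate cell problem on $Y_1$, which I would handle by the regularization $u_{n+1}\mapsto u_{n+1}+\delta$ used for the local case in Section \ref{sec.thm1}, solving the uniformly elliptic cell problem and passing to $\delta\to 0$ via the structural bound of Assumption-type inherited from the explicit form \eqref{1.nonloc} and the $L^\infty$-bounds on $u_i$.
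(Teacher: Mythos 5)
Your overall plan (entropy bounds, time equicontinuity, two-scale compactness, cell problem, homogenized matrix) is the right architecture and matches the paper, but the central compactness step as you have written it does not work, and the conclusion you draw from it is actually false. Your proposed test function $\varphi = u_i^\eps(\cdot+\tau)-u_i^\eps(\cdot)$ is not admissible in \eqref{weak_2}: $u_i^\eps$ itself is only in $L^\infty$, not in $L^2(0,T;H^1)$, since the entropy inequality controls $\nabla\big((u_{n+1}^\eps)^{1/2}u_i^\eps\big)$ and $\nabla(u_{n+1}^\eps)^{1/2}$ but not $\nabla u_i^\eps$ on $\{u_{n+1}^\eps=0\}$. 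For the same reason, Kolmogorov--Riesz/Aubin--Lions cannot yield strong $L^p$ convergence of $u_i^\eps$: there is no spatial compactness for $u_i^\eps$ in the degenerate set, and indeed the paper (Lemma \ref{lem.conv2}) only obtains \emph{weak} convergence of $u_i^\eps$. What the paper does instead is to prove time equicontinuity precisely for the quantities whose gradients are controlled: using the test functions $\phi=\int_{t-\tau}^t\big(\vartheta_\tau(u_{n+1}^\eps)^{3/2}-(u_{n+1}^\eps)^{3/2}\big)\kappa\,d\sigma$ (summed over $i$) one gets $\|\vartheta_\tau u_{n+1}^\eps-u_{n+1}^\eps\|_{L^{5/2}}\le C\tau^{1/5}$, and with $\phi_i=\int_{t-\tau}^t\vartheta_\tau((u_{n+1}^\eps)^{1/2})\big(\vartheta_\tau((u_{n+1}^\eps)^{1/2}u_i^\eps)-(u_{n+1}^\eps)^{1/2}u_i^\eps\big)\kappa\,d\sigma$ one gets $\|\vartheta_\tau((u_{n+1}^\eps)^{1/2}u_i^\eps)-(u_{n+1}^\eps)^{1/2}u_i^\eps\|_{L^2}\le C\tau^{1/10}$. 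These, together with the $H^1$-bounds, give strong convergence of $u_{n+1}^\eps$ and of $(u_{n+1}^\eps)^{1/2}u_i^\eps$, but only weak convergence of $u_i^\eps$. That is enough because the flux in \eqref{weak_2} can be written as $(u_{n+1}^\eps)^{1/2}\nabla\big((u_{n+1}^\eps)^{1/2}u_i^\eps\big)-3\,(u_{n+1}^\eps)^{1/2}u_i^\eps\,\nabla(u_{n+1}^\eps)^{1/2}$: in each product one factor converges strongly and the other two-scale, so Lemma \ref{lem4} applies. Your proof needs this reorganisation; without it the limit in the nonlinear flux does not pass.

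A secondary misdiagnosis: in part (ii) the cell problem on $Y_1$ is \emph{not} degenerate and needs no $\delta$-regularisation. After inserting the corrector ansatz $W_i=V_i-3u_iV_{n+1}$ and using that the macroscopic variables are independent of $y$, the factor $u_{n+1}^{1/2}$ (and $D_i$) pulls out of the $y$-integral; on the set $\{u_{n+1}>0\}$ it cancels and one is left with the plain scalar Neumann cell problem $\diver_y(\nabla_y\widehat w^\ell+e_\ell)=0$ in $Y_1$ (resp. $\diver_y(P(y)(\nabla_y w^\ell+e_\ell))=0$ in $Y$), which is uniformly elliptic and solvable by Lax--Milgram; on $\{u_{n+1}=0\}$ the macroscopic flux vanishes anyway, so non-uniqueness of the corrector there is harmless. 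The regularisation $u_\delta$ is needed in Lemma \ref{exist_unit_cell} for the locally degenerate case of Theorem \ref{thm1}, not here.
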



\section{Proof of Theorem \ref{thm1}}\label{sec.thm1}

For the proof the theorem, 
we show some a priori estimates uniform in $\eps$ for solutions of the
microscopic problems \eqref{1.eq}-\eqref{1.bic} and \eqref{2.eq}-\eqref{2.bic}. 
We suppose throughout the section that Assumptions A1-A6 hold.
First, we recall the following elementary inequalities.

\begin{lemma}[H\"older-type inequalities]\label{lem.ineq}
Let $a$, $b\ge 0$ and $p\ge 1$. Then
$$
  |a-b|^p \le |a^p-b^p| \le p(a^{p-1}+b^{p-1})|a-b|.
$$
\end{lemma}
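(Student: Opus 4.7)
The plan is to reduce to the case $a\ge b\ge 0$ by symmetry (both inequalities are symmetric in $a,b$, and the right-hand sides are invariant under swapping), and then dispatch the two inequalities independently by elementary calculus.

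For the left inequality, I would write $a=(a-b)+b$ and invoke the superadditivity of the map $t\mapsto t^p$ on $[0,\infty)$ when $p\ge 1$, which gives
\[
  a^p = \bigl((a-b)+b\bigr)^p \ge (a-b)^p + b^p,
\]
so $(a-b)^p \le a^p-b^p$. If I want to avoid quoting superadditivity as known, I would instead define $g(t)=t^p-(t-b)^p$ for $t\ge b$ and observe $g'(t)=p\bigl(t^{p-1}-(t-b)^{p-1}\bigr)\ge 0$ since $p-1\ge 0$ and $t\ge t-b\ge 0$; hence $g(a)\ge g(b)=b^p$, which is the same conclusion.

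For the right inequality, I would use the fundamental theorem of calculus:
\[
  a^p-b^p = \int_b^a p\, t^{p-1}\, dt \le p\, a^{p-1}(a-b) \le p\bigl(a^{p-1}+b^{p-1}\bigr)(a-b),
\]
since $t^{p-1}$ is nondecreasing in $t$ on $[0,\infty)$ for $p\ge 1$, and $b^{p-1}\ge 0$. (Equivalently, the mean value theorem yields $a^p-b^p=p\xi^{p-1}(a-b)$ for some $\xi\in(b,a)$, and one bounds $\xi^{p-1}\le a^{p-1}\le a^{p-1}+b^{p-1}$.)

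There is no real obstacle here; the only point requiring a moment of care is making sure the argument covers the boundary cases $b=0$ and $a=b$ (both trivial: the inequalities reduce to equalities or $0\le 0$) and that the symmetry reduction is genuinely applicable, which it is because $|a-b|$, $|a^p-b^p|$, and $a^{p-1}+b^{p-1}$ are all symmetric in $a$ and $b$.
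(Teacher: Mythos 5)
The paper states Lemma~\ref{lem.ineq} without proof, calling it an ``elementary inequality,'' so there is no reference argument to compare against. Your proof is correct: the symmetry reduction to $a\ge b\ge 0$ is valid, the superadditivity of $t\mapsto t^p$ (or the monotonicity of $g(t)=t^p-(t-b)^p$) settles the left inequality, and the integral (or mean-value) bound $a^p-b^p=\int_b^a p\,t^{p-1}\,dt\le p\,a^{p-1}(a-b)\le p(a^{p-1}+b^{p-1})(a-b)$ settles the right one; the boundary cases $a=b$ and $b=0$ are handled as you note.
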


The a priori estimates for problem \eqref{1.eq}-\eqref{1.bic} are as follows.

\begin{lemma}[A priori estimates]\label{lem.est}
For any $\eps>0$, there exists a bounded weak solution $u^\eps$ of problem
\eqref{1.eq}-\eqref{1.bic} such that $u^\eps(x,t)\in \overline{\dom}$ for $x\in\Omega$, 
$t>0$ and
\begin{align}
  \|(u_i^\eps)^{s_i+1}\|_{L^2(0,T;H^1(\Omega))}
	&\le C &&\quad\mbox{for } i=1, \ldots, n, \label{est1} \\
	\|u_i^\eps\|_{L^2(0,T;H^1(\Omega))} &\le C 
	&&\quad\mbox{for } -1<s_i\le 0, \label{est2} \\
	\|\vartheta_\tau u_i^\eps - u_i^\eps\|_{L^2((0,T-\tau)\times\Omega)}
	&\le C\tau^{1/4} &&\quad\mbox{for } -1<s_i\le 0, \label{est3} \\
	\|\vartheta_\tau u_i^\eps - u_i^\eps\|_{L^{2+s_i}((0,T-\tau)\times\Omega)}
	&\le C\tau^{1/(4+2s_i)} &&\quad\mbox{for }s_i>0, \label{est4}
\end{align}
where $\vartheta_\tau u_i^\eps(x,t)=u_i^\eps(x,t+\tau)$ for $x\in\Omega$ and
$t\in(0,T-\tau)$, for $\tau \in (0, T)$, 
and the constant $C>0$ is independent of $\eps$.
\end{lemma}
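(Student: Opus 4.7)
My plan is to obtain existence and the gradient bounds \eqref{est1}--\eqref{est2} via the boundedness-by-entropy machinery of \cite{Jue15}, and then concentrate on the novel time-equicontinuity estimates \eqref{est3}--\eqref{est4}. For fixed $\eps>0$, the multiplier $P^\eps$ is uniformly elliptic by A3, so it does not disturb the entropy argument. I would reproduce the Galerkin scheme of \cite{Jue15} in the entropy variable $w=h'(u)$, use A1 to invert and recover $u^\eps\in\overline\dom$, and pass to the limit in the discretization parameter. Testing the equation formally with $h'(u^\eps)$ and integrating in time yields the standard entropy--dissipation identity; by A2 together with $P_k^\eps\ge d_0$, the dissipation is bounded below by $\alpha d_0\sum_i(s_i+1)^{-2}\int|\na(u_i^\eps)^{s_i+1}|^2\,dx$, while A4 combined with Gronwall's inequality closes the estimate and proves \eqref{est1} uniformly in $\eps$. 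Estimate \eqref{est2} then follows from the identity $\na u_i^\eps = (s_i+1)^{-1}(u_i^\eps)^{-s_i}\na(u_i^\eps)^{s_i+1}$, the exponent $-s_i\ge 0$ being nonnegative for $-1<s_i\le 0$ and $u^\eps\in\overline\dom\subset(0,1)^n$ being $L^\infty$-bounded.

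The heart of the proof is the $\eps$-uniform bound $\|\pa_t u_i^\eps\|_{L^2(0,T;H^1(\Omega)')}\le C$. From the weak formulation this reduces to an $L^2(\Omega_T)$-bound on $P^\eps a_{ij}(u^\eps)\na u_j^\eps$, which is exactly where A3 is decisive: for $s_j>0$,
\[
  |a_{ij}(u^\eps)\na u_j^\eps|\le C_A (u_j^\eps)^{s_j}|\na u_j^\eps| = \tfrac{C_A}{s_j+1}\,|\na (u_j^\eps)^{s_j+1}|,
\]
which lies in $L^2$ by \eqref{est1}; for $-1<s_j\le 0$, $a_{ij}$ is continuous on the bounded set $\overline\dom$ hence bounded, and $\na u_j^\eps\in L^2$ by \eqref{est2}. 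Writing $u_i^\eps(\cdot,t+\tau)-u_i^\eps(\cdot,t)=\int_t^{t+\tau}\pa_s u_i^\eps\,ds$ in $H^1(\Omega)'$ and applying Cauchy--Schwarz in $s$ then produces the pointwise H\"older-in-time estimate
\[
  \|u_i^\eps(\cdot,t+\tau)-u_i^\eps(\cdot,t)\|_{H^1(\Omega)'}\le C\tau^{1/2}
\]
uniformly in $t$ and in $\eps$.

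I conclude by combining this pointwise estimate with the duality inequality $\|w\|_{L^2(\Omega)}^2\le\|w\|_{H^1(\Omega)}\|w\|_{H^1(\Omega)'}$. For $-1<s_i\le 0$, taking $w=u_i^\eps(t+\tau)-u_i^\eps(t)\in H^1(\Omega)$, integrating in $t\in(0,T-\tau)$ and using Cauchy--Schwarz together with \eqref{est2} yields $\|w\|_{L^2(\Omega_T)}^2\le C\tau^{1/2}$, which is exactly \eqref{est3}. For $s_i>0$ the function $u_i^\eps$ is not known to lie in $L^2(0,T;H^1(\Omega))$, so I pair the increment $u_i^\eps(t+\tau)-u_i^\eps(t)$ instead with $(u_i^\eps(t+\tau))^{s_i+1}-(u_i^\eps(t))^{s_i+1}\in H^1(\Omega)$; Lemma~\ref{lem.ineq} applied with $p=s_i+1\ge 1$ gives
\[
  |u_i^\eps(t+\tau)-u_i^\eps(t)|^{s_i+2}\le\big((u_i^\eps(t+\tau))^{s_i+1}-(u_i^\eps(t))^{s_i+1}\big)\big(u_i^\eps(t+\tau)-u_i^\eps(t)\big),
\]
and the $H^1$--$H^1(\Omega)'$ duality applied to the right-hand side, combined with the pointwise estimate of the previous paragraph, yields \eqref{est4}. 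The principal obstacle will be this uniform $H^1(\Omega)'$-bound on $\pa_t u_i^\eps$: the growth condition A3 is tailor-made to convert $a_{ij}\na u_j^\eps$ into a multiple of $\na (u_j^\eps)^{s_j+1}$ exactly where this is needed, and without it the argument would break down for indices with $s_j>0$.
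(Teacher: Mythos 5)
Your proof is correct, but it follows a genuinely different route from the paper. The paper proves \eqref{est3}--\eqref{est4} by a Steklov-averaging device: it inserts the time-integral test functions
$\phi_i(t)=\int_{t-\tau}^t(\vartheta_\tau u_i^\eps-u_i^\eps)\kappa\,d\sigma$ (and the analogous one with the power $s_i+1$ for $s_i>0$), integrates by parts in $t$ to identify $I_1$ with the desired increment integral, and bounds $I_2,I_3$ directly via A3 and \eqref{est1}--\eqref{est2}. You instead make the dual-space estimate explicit: first establish the uniform bound $\|\pa_t u_i^\eps\|_{L^2(0,T;H^1(\Omega)')}\le C$ from the weak form (here A3 plays exactly the role you describe, converting $a_{ij}\na u_j^\eps$ into a multiple of $\na(u_j^\eps)^{s_j+1}$), and then close by $\|w\|_{L^2}^2\le\|w\|_{H^1}\|w\|_{H^1(\Omega)'}$ together with the algebraic inequality $|a-b|^{s_i+2}\le(a^{s_i+1}-b^{s_i+1})(a-b)$ for the $s_i>0$ case. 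Both derivations rest on the same ingredients (A3, \eqref{est1}--\eqref{est2}), so they buy the same exponents, and in fact both deliver a better rate than the stated $\tau^{1/4}$ and $\tau^{1/(4+2s_i)}$ if the constants are chased carefully.

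Two remarks on your write-up. First, the estimate $\|u_i^\eps(\cdot,t+\tau)-u_i^\eps(\cdot,t)\|_{H^1(\Omega)'}\le C\tau^{1/2}$ does \emph{not} hold uniformly in $t$, since $\pa_t u_i^\eps$ is only controlled in $L^2_t$, not $L^\infty_t$; the correct statement is the integrated version $\int_0^{T-\tau}\|\vartheta_\tau u_i^\eps-u_i^\eps\|_{H^1(\Omega)'}^2\,dt\le C\tau^2$, which is what your subsequent Cauchy--Schwarz in $t$ actually uses, so the argument goes through. Second, the paper deliberately avoids the explicit $\pa_t u^\eps\in L^2(0,T;H^1(\Omega)')$ route: as remarked before Assumption A1, a uniform $L^2(0,T;H^1(\Omega^\eps)')$ bound on the time derivative does not transfer to the extension of $u^\eps$ from the perforated domain $\Omega^\eps$ into $\Omega$. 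The Steklov test-function argument delivers the time-equicontinuity of $\vartheta_\tau u_i^\eps-u_i^\eps$ in $L^p(\Omega^\eps_T)$, which \emph{does} transfer to the extension via its $L^p$-boundedness, and this is why the paper's version is reused verbatim in Lemma~\ref{lem.est_2}. Your proof is a clean alternative for Lemma~\ref{lem.est} itself, but it would not carry over to the perforated-domain companion lemma without modification.
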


\begin{proof}
Theorem 2 in \cite{Jue15} shows that there exists a bounded weak solution 
$u^\eps$ to \eqref{1.eq}-\eqref{1.bic} satisfying $u^\eps(x,t)\in\overline{\dom}$
for $x\in\Omega$, $t>0$. Estimates \eqref{est1}-\eqref{est2}
are a consequence of the entropy production inequality, which is obtained by
taking an approximation of $(\pa h/\pa u_i)(u^\eps)$ as a test function in \eqref{1.eq}. 
Notice that the dependence on $x\in \Omega$ is via multiplication by a 
diagonal matrix $P^\eps(x)$, so the entropy $h(u)$ does not depend explicitly on $x$.   
Since the entropy $h$ is generally undefined on
$\pa\dom$, the equations in \cite{Jue15} have been approximated, and the
existence of a family of approximate solutions satisfying 
\eqref{est1} has been proved. Then the convergence of the approximate solutions 
in appropriate spaces for vanishing approximation 
parameters directly leads to \eqref{est1}.  
Thanks to the positive lower bound for $P$ (uniform in $\eps$), we see that estimate
\eqref{est1} is independent of $\eps$.

Estimate \eqref{est2} for $-1<s_i \leq  0$ follows from \eqref{est1} and the
boundedness of $u^\eps$:
$$
  \|\na u_i^\eps\|_{L^2(\Omega_T)} 
	= \frac{1}{s_i+1}\|u_i^\eps\|_{L^\infty(\Omega_T)}^{-s_i}
	\|\na(u_i^\eps)^{s_i+1}\|_{L^2(\Omega_T)} \le C,
$$
for $i=1, \ldots, n$, where $C>0$ is here and in
the following a generic constant independent of $\eps$.
The boundedness of $(u^\eps)$ (uniform in $\eps$) is ensured by the assumptions on 
$h$, see Assumption A1. 

It remains to show \eqref{est3} and \eqref{est4}. For this, we use the 
(admissible) test function $\phi=(\phi_1,\ldots,\phi_n)$ with
\begin{align*}
  \phi_i(x,t) &= \int_{t-\tau}^t(\vartheta_\tau u_i^\eps(x,\sigma)-u_i^\eps(x,\sigma))
	\kappa(\sigma)d\sigma 
	&& \mbox{if }s_i\le 0, \\
  \phi_i(x,t) &= \int_{t-\tau}^t\big((\vartheta_\tau u_i^\eps(x,\sigma))^{s_i+1}
	- (u_i^\eps(x,\sigma))^{s_i+1}\big)\kappa(\sigma)d\sigma &&\mbox{if }s_i>0,
\end{align*}
where $\tau\in(0,T)$, $i=1,\ldots,n$, 
$\kappa(\sigma)=1$ for $\sigma\in(0,T-\tau)$ and $\kappa(\sigma)=0$
for $\sigma\in[-\tau,0]\cup[T-\tau,T]$. This gives
\begin{align*}
  0 &= \int_0^T\sum_{i=1}^n\langle\pa_t u_i^\eps,\phi_i\rangle dt
	+ \int_0^T\int_\Omega \sum_{i,j=1}^n P^\eps(x)a_{ij}(u^\eps)
	\na u_j^\eps\cdot\na \phi_i dxdt \\
	&\phantom{xx}{}- \int_0^T\int_\Omega\sum_{i=1}^n f_i(u^\eps)\phi_i dxdt 
	=: I_1 + I_2 + I_3.
\end{align*}
We integrate by parts in the first integral, taking into account that 
$\phi_i(0)=\phi_i(T)=0$. Then, for all $i=1, \ldots, n$ such that $s_i\le 0$,
\begin{align*}
  \int_0^T\langle\pa_t u_i^\eps,\phi_i\rangle dt
	&= -\int_0^T\int_\Omega u_i^\eps\pa_t\phi_i dxdt \\
	&= -\int_0^{T-\tau}\int_\Omega
	u_i^\eps(\vartheta_\tau u_i^\eps-u_i^\eps)dxdt
  + \int_\tau^T\int_\Omega
	u_i^\eps(u_i^\eps-\vartheta_{-\tau} u_i^\eps)dxdt \\
	&= -\int_0^{T-\tau}\int_\Omega
	u_i^\eps(\vartheta_\tau u_i^\eps-u_i^\eps)dxdt
  + \int_0^{T-\tau}\int_\Omega
	\vartheta_\tau u_i^\eps(\vartheta_\tau u_i^\eps	- u_i^\eps)dxdt \\
	&= \int_0^{T-\tau}\int_\Omega(\vartheta_\tau u_i^\eps-u_i^\eps)^2 dxdt.
\end{align*}
In a similar way, for those $i=1, \ldots, n$ such that  $s_i>0$, 
$$
  \int_0^T\langle\pa_t u_i^\eps,\phi_i\rangle dt
	= \int_0^{T-\tau}\int_\Omega(\vartheta_\tau u_i^\eps-u_i^\eps)
	\big((\vartheta_\tau u_i^\eps)^{s_i+1}- (u_i^\eps)^{s_i+1} \big) dxdt.
$$
Lemma \ref{lem.ineq} with $p=s_i+1$ gives
$$
  |\vartheta_\tau u_i^\eps-u_i^\eps|^{s_i+1}
  \le \big|(\vartheta_\tau u_i^\eps)^{s_i+1} -(u_i^\eps)^{s_i+1}\big|.
$$
Thus, still in the case $s_i>0$,
$$ 
  \int_0^T\langle\pa_t u_i^\eps,\phi_i\rangle dt 
	\ge \int_0^{T-\tau}\int_\Omega
	(\vartheta_\tau u_i^\eps-u_i^\eps)^{s_i+2} dxdt.
$$
We conclude that
$$
  I_1 \ge \int_0^{T-\tau}\int_\Omega\bigg(\sum_{i=1,\,s_i>0}^n
	(\vartheta_\tau u_i^\eps-u_i^\eps)^{s_i+2} 
	+ \sum_{i=1,\,s_i\le 0}^n(\vartheta_\tau u_i^\eps-u_i^\eps)^2\bigg) dxdt.
$$

For the second integral $I_2$, we use the relation
$$
   \int_0^T w(t)\int_{t-\tau}^t v(\sigma)\, d\sigma dt 
	= \int_0^{T-\tau}\int_t^{t+\tau}w(\sigma)d\sigma\,  v(t) dt, 
$$
where $v(t) = 0$ for $t \in [-\tau, 0]\cup[T-\tau, T]$,  to infer that
\begin{align*}
  I_2 &= \int_{\Omega_{T-\tau}} \sum_{i,j=1,\, s_i >0}^n P^\eps(x)\na
	\big((\vartheta_\tau u_i^\eps)^{s_i+1}-(u_i^\eps)^{s_i+1}\big)
	\cdot\int_t^{t+\tau} a_{ij}(u^\eps)\na u_j^\eps d\sigma dxdt \\
	&\phantom{xx}{}+ \int_{\Omega_{T-\tau}} \sum_{i,j=1,\, s_i \leq 0}^n P^\eps(x)\na
	(\vartheta_\tau u_i^\eps- u_i^\eps)	\cdot\int_t^{t+\tau} 
	a_{ij}(u^\eps)\na u_j^\eps d\sigma dxdt.
\end{align*}
Again, we distinguish between the cases $s_i\le 0$ and $s_i>0$. Employing the
Cauchy-Schwarz inequality  we have
\begin{equation*}
\begin{aligned}
  |I_2| &\le \tau^{1/2}C\sum_{\substack{i,j=1,\, s_i, s_j>0}}^n
	\bigg\|\frac{a_{ij}(u^\eps)}{(u_j^\eps)^{s_j}}\bigg\|_{L^\infty(\Omega_T)}
	\|\na (u_i^\eps)^{s_i+1}\|_{L^2(\Omega_T)}
	\|\na (u_j^\eps)^{s_j+1}\|_{L^2(\Omega_T)} \\
	&\phantom{xx}{}+ \tau^{1/2}C\sum_{\substack{i,j=1,\, s_i>0,\,  s_j\le 0}}^n
	\|a_{ij}(u^\eps)\|_{L^\infty(\Omega_T)}
	\|\na(u_i^\eps)^{s_i+1}\|_{L^2(\Omega_T)}
	\|\na u_j^\eps\|_{L^2(\Omega_T)}  \\
	&\phantom{xx}{} +\tau^{1/2}C\sum_{\substack{i,j=1,\, s_i \leq 0,\, s_j>0}}^n
	\bigg\|\frac{a_{ij}(u^\eps)}{(u_j^\eps)^{s_j}}\bigg\|_{L^\infty(\Omega_T)}
	\|\na  u_i^\eps\|_{L^2(\Omega_T)}
	\|\na (u_j^\eps)^{s_j+1}\|_{L^2(\Omega_T)} \\
	&\phantom{xx}{}+ \tau^{1/2}C\sum_{\substack{i,j=1,\, s_i,s_j\le 0}}^n
	\|a_{ij}(u^\eps)\|_{L^\infty(\Omega_T)}
	\|\na u_i^\eps\|_{L^2(\Omega_T)}
	\|\na u_j^\eps\|_{L^2(\Omega_T)} 
	\le  C\tau^{1/2},
	\end{aligned}
\end{equation*}
in view of Assumption~A3 and estimates \eqref{est1}-\eqref{est2}.

It remains to estimate $I_3$. The boundedness of $u^\eps$ yields
\begin{align*}
 | I_3| &\leq  \int_0^{T-\tau}\int_\Omega\sum_{i=1,\,s_i>0}^n\int_t^{t+\tau}
	|f_i(u^\eps)|ds \,  \big|(\vartheta_\tau u_i^\eps)^{s_i+1}
	- (u_i^\eps)^{s_i+1}\big|  dxdt \\
	&\phantom{xx}{} +  \int_0^{T-\tau}\int_\Omega\sum_{i=1,\,s_i\le 0}^n\int_t^{t+\tau}
	| f_i(u^\eps)|ds \, |\vartheta_\tau u_i^\eps - u_i^\eps|\,  dxdt   \le C\tau.
\end{align*}
Putting these estimates together, we infer that \eqref{est3} for $s_i\le 0$
and \eqref{est4} for $s_i>0$ holds, concluding the proof.
\end{proof}

\begin{lemma}[A priori estimates] \label{lem.est_2}
For any $\eps>0$, there exists a bounded weak solution $u^\eps$ to 
\eqref{2.eq}-\eqref{2.bic} such that $u^\eps(x,t)\in \overline\dom$ for $x\in\Omega$, 
$t>0$ and
\begin{align}
  \|(u_i^\eps)^{s_i+1}\|_{L^2(0,T;H^1(\Omega^\eps))}
	&\le C &&\mbox{for } i=1,\ldots,n, \label{est21} \\
	\|u_i^\eps\|_{L^2(0,T;H^1(\Omega^\eps))} 
	&\le C &&\mbox{for } -1<s_i\le 0, \label{est22} \\
	\|\vartheta_\tau u_i^\eps - u_i^\eps\|_{L^2((0,T-\tau)\times \Omega^\eps)}
	&\le C\tau^{1/4} &&\mbox{for } -1<s_i\le 0, \label{est23} \\
	\|\vartheta_\tau u_i^\eps - u_i^\eps\|_{L^{2+s_i}((0,T-\tau)\times\Omega^\eps)}
	&\le C\tau^{1/(4+2s_i)} &&\mbox{for } s_i>0, \label{est24}
\end{align}
where $\vartheta_\tau u_i^\eps(x,t)=u_i^\eps(x,t+\tau)$ for $x\in\Omega^\eps$,
$t\in(0,T-\tau)$, and the constant $C>0$ is independent of $\eps$.
\end{lemma}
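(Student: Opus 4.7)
\smallskip

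\emph{Proof proposal.} The strategy is to run exactly the argument of Lemma \ref{lem.est}, but to replace $\Omega$ by $\Omega^\eps$ throughout and to note that, because there is no $P^\eps$ in \eqref{2.eq} and the boundary condition \eqref{2.bic} is no-flux on the whole boundary $\partial\Omega\cup\Gamma^\eps$, every step goes through verbatim and produces constants that are uniform in~$\eps$.

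First, I would invoke Theorem~2 of \cite{Jue15}, applied on the Lipschitz domain $\Omega^\eps$ with the same entropy $h$ satisfying Assumptions A1--A6, to obtain a bounded weak solution $u^\eps$ with $u^\eps(x,t)\in\overline{\dom}$ and to derive the entropy production identity on $\Omega^\eps_T$. Testing an approximation of $(\partial h/\partial u_i)(u^\eps)$ in \eqref{2.eq}, using Assumption~A2 for the diffusive term, and controlling the reaction by Assumption~A4 and a Gronwall argument, one obtains
$$
  \sup_{t\in(0,T)}\int_{\Omega^\eps}h(u^\eps)dx + \alpha\sum_{i=1}^n\int_0^T\int_{\Omega^\eps}(u_i^\eps)^{2s_i}|\na u_i^\eps|^2 dxdt \le C,
$$
with $C$ independent of $\eps$ (because the initial entropy $\int_{\Omega^\eps}h(u^0)dx\le\int_{\Omega}h(u^0)dx$ and $h$ is bounded on $\overline{\dom}$). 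Rewriting the gradient term as $|\na (u_i^\eps)^{s_i+1}|^2$ (up to a constant $(s_i+1)^{-2}$) together with the $L^\infty$ bound on $u^\eps$ yields \eqref{est21}. Estimate \eqref{est22} for $-1<s_i\le 0$ follows from \eqref{est21} and the $L^\infty$ bound exactly as in Lemma \ref{lem.est}, since
$$
  \|\na u_i^\eps\|_{L^2(\Omega^\eps_T)} = \frac{1}{s_i+1}\|u_i^\eps\|_{L^\infty(\Omega^\eps_T)}^{-s_i}\|\na (u_i^\eps)^{s_i+1}\|_{L^2(\Omega^\eps_T)}.
$$

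For the time-equicontinuity estimates \eqref{est23}-\eqref{est24}, I would use the same piecewise-in-time test functions
\begin{align*}
  \phi_i(x,t) &= \int_{t-\tau}^t(\vartheta_\tau u_i^\eps - u_i^\eps)(x,\sigma)\kappa(\sigma)d\sigma &&\text{if }s_i\le 0,\\
  \phi_i(x,t) &= \int_{t-\tau}^t\big((\vartheta_\tau u_i^\eps)^{s_i+1}-(u_i^\eps)^{s_i+1}\big)(x,\sigma)\kappa(\sigma)d\sigma &&\text{if }s_i>0,
\end{align*}
which now live in $L^2(0,T;H^1(\Omega^\eps;\R^n))$ and are therefore admissible in the weak formulation of \eqref{2.eq}-\eqref{2.bic}. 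The integration-by-parts identity for $I_1=\int_0^T\sum_i\langle\pa_t u_i^\eps,\phi_i\rangle dt$ is purely algebraic in the time variable and produces, exactly as in Lemma \ref{lem.est}, a lower bound by $\sum_i\int_0^{T-\tau}\int_{\Omega^\eps}(\vartheta_\tau u_i^\eps-u_i^\eps)^{\min(2,s_i+2)}dxdt$ (using Lemma \ref{lem.ineq} for the case $s_i>0$). The diffusive term $I_2$ is handled by the Fubini-type identity, Cauchy--Schwarz, Assumption~A3 to trade $a_{ij}(u^\eps)$ for factors $(u_j^\eps)^{s_j}$, and estimates \eqref{est21}-\eqref{est22}, producing $|I_2|\le C\tau^{1/2}$. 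The reaction term $I_3$ is bounded by $C\tau$ thanks to the boundedness of $u^\eps$ and the continuity of $f$ on $\overline{\dom}$. Rearranging and taking the appropriate root yields \eqref{est23}-\eqref{est24}.

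The main (minor) obstacle is to make sure that every constant really is independent of $\eps$. This is not automatic in perforated domains: trace inequalities and extension operators typically introduce $\eps$-dependent constants. Here, however, the entire argument takes place intrinsically on $\Omega^\eps$ with its natural $H^1(\Omega^\eps)$-norm and the no-flux condition on $\Gamma^\eps$ is automatically compatible with any $H^1(\Omega^\eps)$ test function, so no extension or trace constant enters. The only $\eps$-dependent geometric input is $|\Omega^\eps|\le|\Omega|$, which goes in the right direction. Consequently, all constants reduce to the $\eps$-independent constants from Assumptions A1--A6 and from the $\eps$-independent initial-entropy bound, and we obtain \eqref{est21}-\eqref{est24} with $C$ uniform in $\eps$.
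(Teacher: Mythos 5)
Your proposal is correct and follows exactly the route the paper takes: the paper's proof of this lemma is a one-line remark that the argument of Lemma~\ref{lem.est} carries over verbatim to $\Omega^\eps$, and you have simply spelled this out, including the (correct and worth-noting) observation that uniformity in $\eps$ holds because the whole argument is intrinsic to $\Omega^\eps$ and no extension or trace constants are invoked.
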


\begin{proof}
The proof of a priori estimates \eqref{est21}-\eqref{est24} follows the same steps 
as in the proof of Lemma \ref{lem.est}. Thanks to the structure of the proof, 
all estimates in Lemma \ref{lem.est} can be obtained for $\Omega^\eps$ instead of 
$\Omega$, independently of $\eps$. 
\end{proof}

\begin{remark}[Extension]\rm\label{extension} 
Our assumptions on the microscopic structure of $\Omega^\eps$ ensure that there 
exists an extension $\overline{u^\eps_i}$ of $u^\eps_i$ and 
$\overline{(u^\eps_i)^{s_i + 1}}$ of $(u^\eps_i)^{s_i +1}$ from 
$\Omega^\eps$ to $\Omega$ with the properties 
\begin{align*} 
  & \|\overline{u^\eps_i} \|_{L^2(\Omega)} 
	\leq \mu \| u^\eps_i \|_{L^2(\Omega^\eps)}, \quad 
  \|\nabla  \overline{u^\eps_i} \|_{L^2(\Omega)} 
	\leq \mu \|\nabla  u^\eps_i\|_{L^2(\Omega^\eps)}  \text{ for } -1< s_i\leq 0, \\
  & \|\overline{(u^\eps_i)^{s_i + 1}}\|_{L^2(\Omega)} 
	\leq \mu \|(u^\eps_i)^{s_i+1} \|_{L^2(\Omega^\eps)}, 
  \quad \|\nabla \overline{(u^\eps_i)^{s_i + 1}}\|_{L^2(\Omega)} 
	\leq \mu \|\nabla (u^\eps_i)^{s_i+1}\|_{L^2(\Omega^\eps)}
\end{align*} 
for $t>0$, where $\mu>0$ is some constant independent of $\eps$; see, e.g., 
\cite{Cioranescu_book} or Appendix \ref{app} for details. 
\qed
\end{remark}

\begin{lemma}[Convergence]\label{conver_11}
Let $u^\eps$ be a weak solution of
\eqref{1.eq}-\eqref{1.bic} or \eqref{2.eq}-\eqref{2.bic}.
Then there exists a subsequence of $(u^\eps)$, which is not relabeled, 
and functions $u\in L^\infty(0,T; L^\infty(\Omega; \R^n))$, with $u_i^{s_i+1} 
\in L^2(0,T;H^1(\Omega))$ for $i=1,\ldots, n$, 
$V_1,\ldots,V_n\in L^2(\Omega_T;H^1_{\rm per} (Y)/\R)$ such that, as $\eps \to 0$,
\begin{align}
  u^\eps_i &\to u_i &&\quad\mbox{strongly in }L^p(\Omega_T),\ p<\infty, \label{conv1} \\
  (u_i^\eps)^{s_i+1} &\to u_i^{s_i+1}
	&&\quad\mbox{strongly in }L^2(\Omega_T) \text{ for } s_i >0, \label{conv2} \\
  \na (u_i^\eps)^{s_i+1} &\rightharpoonup \na u_i^{s_i+1}
	&&\quad\mbox{weakly in }L^2(\Omega_T), \label{conv3} \\
  \na (u_i^\eps)^{s_i+1} &\rightharpoonup \na u_i^{s_i+1}+\na_y V_i
	&& \quad\mbox{two-scale}, \ i=1,\ldots,n. \label{conv4} 
\end{align}
In the case of solutions $(u^\eps)$ of \eqref{2.eq}-\eqref{2.bic},
convergence results \eqref{conv1}-\eqref{conv4} hold for a subsequence of 
the  extension of $(u^\eps_i)^{s_i+1}$  and of $u^\eps_i$ from 
$\Omega^\eps_T$ into $\Omega_T$, for $i=1, \ldots, n$, 
considered in Remark \ref{extension}.  
\end{lemma}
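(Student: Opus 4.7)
The plan is to combine the uniform a priori estimates from Lemmas \ref{lem.est} and \ref{lem.est_2} with a Kolmogorov--Riesz--Simon compactness argument, and then invoke the standard Nguetseng--Allaire two-scale compactness result for $L^2(0,T;H^1(\Omega))$-bounded sequences recalled in Appendix \ref{app}. The key auxiliary sequence throughout is $v_i^\eps := (u_i^\eps)^{s_i+1}$, whose $L^2(0,T;H^1(\Omega))$-norm is uniformly bounded in $\eps$ by \eqref{est1}.

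To obtain strong $L^2(\Omega_T)$-compactness of $v_i^\eps$ I would split cases according to the sign of $s_i$. If $-1<s_i\le 0$, estimates \eqref{est2} and \eqref{est3} apply directly to $u_i^\eps$, so Kolmogorov--Riesz gives relative compactness of $u_i^\eps$ in $L^2(\Omega_T)$; this transfers to $v_i^\eps$ by continuity and boundedness of $z\mapsto z^{s_i+1}$ on the range of $u_i^\eps$. If $s_i>0$, I would use Lemma \ref{lem.ineq} with $p=s_i+1$ together with the uniform $L^\infty$-bound to pass from the time-shift estimate on $u_i^\eps$ to one on $v_i^\eps$,
\[
|\vartheta_\tau v_i^\eps - v_i^\eps| \le (s_i+1)\bigl((\vartheta_\tau u_i^\eps)^{s_i}+(u_i^\eps)^{s_i}\bigr)|\vartheta_\tau u_i^\eps - u_i^\eps| \le C\,|\vartheta_\tau u_i^\eps - u_i^\eps|,
\]
so that \eqref{est4} yields $\|\vartheta_\tau v_i^\eps - v_i^\eps\|_{L^{2+s_i}(\Omega_T)} \le C\tau^{1/(4+2s_i)}$, which embeds into an $L^2$-estimate since $2+s_i>2$ and $\Omega_T$ is bounded. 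Combined with the spatial $H^1$-bound, the Kolmogorov--Riesz--Simon criterion produces a subsequence with $v_i^\eps \to v_i$ strongly in $L^2(\Omega_T)$ and a.e.

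Denoting the limit by $v_i$ and setting $u_i := v_i^{1/(s_i+1)}$, the uniform $L^\infty$-bound and dominated convergence upgrade the pointwise convergence to \eqref{conv1} and \eqref{conv2} in every $L^p(\Omega_T)$, $p<\infty$. Weak convergence \eqref{conv3} is then immediate: $\na v_i^\eps$ is bounded in $L^2(\Omega_T)$, admits a weak limit, and the strong convergence of $v_i^\eps$ identifies this limit as $\na u_i^{s_i+1}$ in the distributional sense. The two-scale convergence \eqref{conv4} is a direct application of the standard compactness theorem from Appendix \ref{app}, producing $V_i \in L^2(\Omega_T; H^1_{\rm per}(Y)/\R)$ with the claimed two-scale limit $\na u_i^{s_i+1} + \na_y V_i$.

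For solutions of the perforated problem \eqref{2.eq}--\eqref{2.bic}, the extensions provided by Remark \ref{extension} preserve the $L^2(0,T;H^1(\Omega))$-bounds uniformly in $\eps$; since the extension operator acts pointwise in time, it commutes with time translations, and the time-equicontinuity estimates \eqref{est23}--\eqref{est24} are inherited by the extended sequences, so the preceding argument applies verbatim. The main technical obstacle lies in the case $s_i\in(-1,0)$, where $z\mapsto z^{s_i+1}$ is merely H\"older continuous and compactness of $v_i^\eps$ cannot be inferred directly from \eqref{est1}; this is precisely why the separate gradient estimate \eqref{est2} (respectively \eqref{est22}) on $u_i^\eps$ itself, established via the $L^\infty$-bound in Lemmas \ref{lem.est} and \ref{lem.est_2}, is indispensable.
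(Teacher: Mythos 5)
Your argument for the fixed-domain case essentially reproduces the paper's proof: the same case distinction on the sign of $s_i$, the same use of Lemma~\ref{lem.ineq} to transfer the time-shift estimate from $u_i^\eps$ to $(u_i^\eps)^{s_i+1}$ when $s_i>0$, the same Aubin--Lions/Kolmogorov compactness, and the same two-scale compactness for $\na(u_i^\eps)^{s_i+1}$. That part is fine.

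The gap is in the perforated-domain case, where you assert the ``preceding argument applies verbatim.'' It does not, because the two relevant extensions are genuinely different objects: $\overline{(u_i^\eps)^{s_i+1}}$ is \emph{not} equal to $(\overline{u_i^\eps})^{s_i+1}$ on $\Omega\setminus\Omega^\eps$; they agree only on $\Omega^\eps$. So even once you extract strong limits $\overline{u_i^\eps}\to u_i$ (for $-1<s_i\le 0$, via \eqref{est22}--\eqref{est23}) and $\overline{(u_i^\eps)^{s_i+1}}\to w_i$ (for $s_i>0$, via \eqref{est21}--\eqref{est24} plus Lemma~\ref{lem.ineq}), it is not automatic that $w_i=u_i^{s_i+1}$, and \eqref{conv1}--\eqref{conv2} cannot both be read off without establishing this. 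The paper closes this gap in two steps: first, a Cauchy-type estimate based on Lemma~\ref{lem.ineq} and the extension bounds shows that strong convergence of one of $\overline{u_i^\eps}$, $\overline{(u_i^\eps)^{s_i+1}}$ forces strong convergence of the other (with the direction of implication depending on the sign of $s_i$, since $z\mapsto z^{1/(s_i+1)}$ versus $z\mapsto z^{s_i+1}$ is Lipschitz on $[0,1]$ in only one of the two cases); second, testing against $\phi\in C_0^\infty(\Omega_T)$ and using $\chi_{\Omega^\eps}\rightharpoonup |Y_1|/|Y|$ weakly-$\ast$ together with the identity $\overline{u_i^\eps}\chi_{\Omega^\eps}=(\overline{(u_i^\eps)^{s_i+1}})^{1/(s_i+1)}\chi_{\Omega^\eps}$ on $\Omega^\eps_T$ yields $w_i^{1/(s_i+1)}=u_i$ a.e. Without some version of this identification step your proof does not establish the lemma for the perforated problem. (The observation that the extension commutes with time translations is correct and needed, but it only gives you compactness of the two sequences separately, not the coherence of their limits.)
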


\begin{proof}
For $s_i\le 0$, estimates \eqref{est2} and \eqref{est3} allow us to apply 
the Aubin-Lions lemma in the version of \cite{Sim87}, giving the existence 
of a subsequence, not relabeled, such that
$u_i^\eps\to u_i$ strongly in $L^{2}(\Omega_T)$. Since $(u_i^\eps)$ is
bounded in $L^\infty(\Omega_T)$, by construction, this convergence even holds
in $L^p(\Omega_T)$ for any $p<\infty$. 

For $s_i>0$, we apply Lemma \ref{lem.ineq} with $p=s_i+1$ and the bounds
$|\vartheta_\tau u_i^\eps|\le 1$, $|u_i^\eps|\le 1$:
$$
  \|\vartheta_\tau  (u_i^\eps)^{s_i+1} 
	- (u_i^\eps)^{s_i+1}\|_{L^{2+s_i}(\Omega_{T-\tau})}
	\leq C \|\vartheta_\tau u_i^\eps - u_i^\eps\|_{L^{2+s_i}(\Omega_{T-\tau})}
	\leq C \tau^{1/(4+2s_i)}.
$$
Hence, by applying the Aubin-Lions lemma of \cite{Sim87} to 
$(u_i^\eps)^{s_i+1}$, we deduce the strong convergence 
$(u_i^\eps)^{s_i+1}\to w_i$ in $L^2(\Omega_T)$  as $\eps \to 0$
for some $w_i\in L^2(\Omega_T)$
with $w_i\ge 0$. In particular, up to a subsequence, we have
$(u_i^\eps)^{s_i+1}\to w_i$ a.e.\ in $\Omega_T$ and consequently,
$u_i^\eps\to u_i:=w_i^{1/(s_i+1)}$ a.e.\ in $\Omega_T$. 
Since $(u_i^\eps)$ is bounded in $L^\infty(\Omega_T)$, it follows that 
$u_i^\eps\to u_i$ strongly in $L^p(\Omega_T)$ for any $p<\infty$ and also
$(u_i^\eps)^{s_i+1}\to (u_i)^{s_i+1}$ strongly in
$L^2(\Omega_T)$, which proves \eqref{conv2}.

Convergence \eqref{conv3} follows from the bound
\eqref{est1}, possibly after extracting another subsequence.
Finally, the two-scale convergence \eqref{conv4} is a
consequence of the boundedness of $\na (u_i^\eps)^{s_i+1}$ 
in $L^2(\Omega_T)$; see, e.g., \cite{All92, Ngu89} or Lemma \ref{lem1} 
in Appendix \ref{app}.

In the case of solutions $(u^\eps)$ of problem \eqref{2.eq}-\eqref{2.bic}, 
we consider extensions of $u^\eps_i$ and $(u^\eps_i)^{s_i+1}$ from $\Omega^\eps$  
into $\Omega$ as in Remark \ref{extension}, for $i=1, \ldots, n$.  
The properties and the linearity of the extension and the a priori estimates from 
Lemma \ref{lem.est_2} imply the corresponding estimates for 
$\overline{u^\eps_i}$, for those $i$ such that $-1<s_i \leq 0$,  
and $\overline{(u^\eps_i)^{s_i + 1}}$ in 
$L^2(0,T; H^1(\Omega))$, for $\vartheta_\tau \overline{u^\eps_i} - \overline{u^\eps_i}$  
in $L^2(\Omega_T)$ if $-1<s_i\le 0$,  
and for $\vartheta_\tau  \overline{(u^\eps_i)^{s_i + 1}} 
- \overline{(u^\eps_i)^{s_i+1}}$ in $L^{2+s_i}(\Omega_T)$ for $s_i >0$. 

We conclude from the estimates for $\overline{u^\eps_i}$ that there exists 
$u_i \in L^2(0, T; H^1(\Omega))$ such that, up to a subsequence,   
$\overline{u^\eps_i}\to u_i$ strongly in $L^2(\Omega_T)$ for $-1< s_i\leq 0$.
Furthermore, the estimates for $\overline{(u^\eps_i)^{s_i + 1}}$ ensure that there 
exists $w \in L^2(0,T; H^1(\Omega;\R^n))$ such that, up to a subsequence,   
$w^\eps_i := \overline{(u^\eps_i)^{s_i + 1}}$ converges strongly to 
$w_i$ in $L^2(\Omega_T)$.  
This ensures also the a.e.\ pointwise convergence of 
$(w^\eps_i)^{1/(s_i+1)} = (\overline{(u^\eps_i)^{s_i + 1}})^{1/(s_i+1)}$ 
to $w_i^{1/(s_i+1)}$ in $\Omega_T$ as $\eps \to 0$.
It remains to prove that $w_i^{1/(s_i+1)}=u_i$.

The properties of the extension imply that $(w^\eps_i)^{1/(s_i+1)}$ 
is uniformly bounded as $u^\eps_i$ is uniformly bounded.
We deduce that, up to a subsequence, $(w^\eps_i)^{1/(s_i+1)} \to w_i^{1/(s_i+1)}$ 
strongly in $L^2(\Omega_T)$. 
Notice that, due to the construction of the extension, we have 
$(\overline{(u^\eps_i)^{s_i + 1}})^{1/(s_i+1)} = u^\eps_i$ in 
$\Omega^\eps_T$. For $-1<s_i\le 0$, we apply Lemma \ref{lem.ineq} to
$a=(u_i^{\eps_m})^{s_i+1}$, $b=(u_i^{\eps_\ell})^{s_i+1}$ and $p=1/(s_i+1)\ge 1$
and use the properties of the extension:
\begin{align*} 
  \big\|\overline{(u^{\eps_m}_i)^{s_i+1}} - \overline{(u^{\eps_\ell}_i)^{s_i+1}} 
	\big\|_{L^2(\Omega_T)}^2  
	&\leq \mu \big\|{(u^{\eps_m}_i)^{s_i+1}} - {(u^{\eps_\ell}_i)^{s_i+1}} 
	\big\|_{L^2(\Omega^\eps_T)}^2 \\ 
	&\leq \mu_1 \|u^{\eps_m}_i  - u^{\eps_\ell}_i \|^{2(s_i+1)}_{L^2(\Omega^\eps_T)}  
	\leq \mu_2 \big\|\overline{u^{\eps_m}_i}  - \overline{u^{\eps_\ell}_i} 
	\big\|^{2(s_i+1)}_{L^2(\Omega_T)}, 
\end{align*}
whereas for $s_i>0$ we obtain 
\begin{align*} 
  \big\|\overline{u^{\eps_m}_i}  - \overline{u^{\eps_\ell}_i} \big\|_{L^2(\Omega_T)}^2
  &\leq \mu \|u^{\eps_m}_i  -  u^{\eps_\ell}_i \|_{L^2(\Omega^\eps_T)}^2 
  \leq \mu_1 \|(u^{\eps_m}_i)^{s_i + 1} - (u^{\eps_\ell}_i)^{s_i+1} 
	\|^{2/(s_i+1)}_{L^2(\Omega^\eps_T)} \\
  &\leq \mu_2 \big\|\overline{(u^{\eps_m}_i)^{s_i+1}} 
	- \overline{(u^{\eps_\ell}_i)^{s_i+1}} \big\|^{2/(s_i+1)}_{L^2(\Omega_T)}. 
\end{align*}
Hence, if $-1< s_i \leq 0$, the strong convergence of $\overline{u^\eps_i}$ 
implies the strong convergence of $\overline{(u^{\eps}_i)^{s_i+1}}$, while 
for $s_i>0$, strong convergence of $\overline{(u^{\eps}_i)^{s_i+1}}$ ensures 
the strong convergence of $\overline{u^\eps_i}$.  Therefore, denoting by
$\chi_{\Omega^\eps}$ the characteristic function of $\Omega^\eps$,
\begin{align*} 
  \frac{|Y_1|}{|Y|} \int_{\Omega_T} u_i \phi dx dt 
  &= \lim_{\eps \to 0} \int_{\Omega_T} \overline{u^\eps_i} \chi_{\Omega^\eps}\phi dx dt 
  = \lim_{\eps \to 0} \int_0^T\int_{\Omega^\eps} u^\eps_i \phi  dx dt  \\
	&=  \lim_{\eps \to 0} \int_{\Omega_T} (w^\eps_i)^{1/(s_i+1)} 
	\chi_{\Omega^\eps} \phi dx dt 
  = \frac{|Y_1|}{|Y|}\int_{\Omega_T}w_i^{1/(s_i+1)} \phi dx dt,  
\end{align*}
for any $\phi \in C^\infty_0(\Omega_T)$.  We deduce that
$w_i^{1/(s_i+1)} = u_i$ and consequently 
$w_i = u_i^{s_i+1}$ a.e.\ in $\Omega_T$. The boundedness of $\na(u^\eps_i)^{s_i+1}$ 
and the properties of the extension ensure the convergence results in \eqref{conv3} 
and \eqref{conv4}. 
Hence, we obtain convergence results \eqref{conv1}-\eqref{conv4}  for a 
subsequence of $(\overline{u^\eps_i})$  and $(\overline{(u^\eps_i)^{s_i + 1}})$, 
respectively, finishing the proof.
\end{proof}

\begin{proof}[Proof of Theorem \ref{thm1}]
Using the convergence results in Lemma \ref{conver_11}, we are now able to derive the
macroscopic equations for microscopic problems \eqref{1.eq}-\eqref{1.bic} 
and \eqref{2.eq}-\eqref{2.bic}. 

The strong convergence of $(u^\eps)$ in $L^p(\Omega_T)$ for any $p<\infty$ 
and Assumption A3 imply that, for those $j$ satisfying $s_j>0$,
$$
  \frac{a_{ij}(u^\eps)}{(u_j^\eps)^{s_j}} \to \frac{a_{ij}(u)}{(u_j)^{s_j}}
	\quad\mbox{strongly in }L^p(\Omega_T)
$$
and also weakly* in $L^\infty(\Omega_T)$, where we set $a_{ij}(u)/u_j^{s_j}:=0$
if $u_j=0$. For those $j$ with $s_j\le 0$, it follows that
$$
  a_{ij}(u^\eps)\to a_{ij}(u), \quad
  \frac{a_{ij}(u^\eps)}{(u_j^\eps)^{s_j}} \to \frac{a_{ij}(u)}{(u_j)^{s_j}}
	\quad\mbox{strongly in }L^p(\Omega_T). 
$$
Furthermore,  $f_i(u^\eps)\to f_i(u)$ strongly in $L^p(\Omega_T)$, for $p<\infty$.
Notice that we use the same notation for $u_i^\eps$ or $(u_i^\eps)^{s_i+1}$  
and the corresponding extensions from $\Omega^\eps$ into $\Omega$  
when considering problem \eqref{2.eq}-\eqref{2.bic}  
defined in the perforated domain $\Omega^\eps$. 

{\em Step 1: problem \eqref{1.eq}-\eqref{1.bic}.}
We use the admissible test function $\phi^\eps=(\phi_1^\eps,\ldots,\phi_n^\eps)$ 
in the weak formulation of \eqref{1.eq}-\eqref{1.bic}, where
$$
  \phi^\eps_i(x,t)= \phi^0_i(x,t)+\eps\phi^1_i(x,t,x/\eps),
	\quad i=1,\ldots,n,
$$
with $\phi^0_i \in C^1([0,T]; H^1(\Omega))$ such that $\phi^0_i(x,T)=0$ and 
$\phi^1_i\in C^1_0(\Omega_T; C^1_{\rm per}(Y))$. This gives
\begin{align}
  0 &= \int_0^T\langle\pa_t u^\eps,\phi^\eps\rangle dt
	+ \int_{\Omega_T}\sum_{i,j=1}^n P^\eps(x)
	\frac{a_{ij}(u^\eps)}{(s_j+1)(u_j^\eps)^{s_j}}\na (u_j^\eps)^{s_j+1}
	\cdot\na\phi_i^\eps dxdt \nonumber \\
	&\phantom{xx}{}- \int_{\Omega_T} f(u^\eps)\cdot\phi^\eps dxdt
	=: I_1^\eps + I_2^\eps + I_3^\eps. \label{ueps}
\end{align}

We perform the limit $\eps\to 0$ in the integrals $I_k^\eps$ term by term, 
for $k=1,2,3$. Using the strong convergence of $(u^\eps)$, we obtain 
\begin{align*}
  \lim_{\eps \to 0} I_1^\eps 
	&= -\lim_{\eps \to 0} \bigg( \int_0^T \int_\Omega u^\eps \cdot 
	(\pa_t \phi^0+ \eps \pa_t \phi^1) dxdt 
	+ \int_\Omega u^\eps(0) \cdot \big(\phi^0(0)+ \eps  \phi^1(0)\big) 
	dx\bigg) \\
  &= -\int_0^T \int_\Omega u \cdot \pa_t \phi^0  dxdt 
	- \int_\Omega u^0 \cdot \phi^0 (0) dx, \\
\lim_{\eps\to 0}I_3^\eps &= -\int_0^T\int_\Omega f(u)\cdot\phi^0 dxdt.
\end{align*}

The limit $\eps\to 0$ in $I_2^\eps$ is more involved.
By \eqref{conv4}, we have $\na (u_j^\eps)^{s_j+1} \rightharpoonup  
\na u_j^{s_j+1} + \nabla_y V_j$ two-scale. 
Furthermore, we deduce from the definition of $P^\eps$, the strong convergence of 
$(u^\eps)$, and the strong two-scale convergences of $(P(x/\eps))$ and 
$(\na\phi_i^\eps)$ that 
$$
  \lim_{\eps\to 0}\bigg\| P^\eps(x)\frac{a_{ij}(u^\eps)}{(u_j^\eps)^{s_j}}
	\na\phi^\eps_i\bigg\|_{L^2(\Omega_T)}
  =|Y|^{-1/2} \bigg\|P(y)\frac{a_{ij}(u)}{u_j^{s_j}}(\na\phi^0_i+\na_y\phi^1_i)
	\bigg\|_{L^2(\Omega_T\times Y)}, 
$$
for $i,j=1, \ldots, n$. Therefore, by Lemma \ref{lem4} in Appendix \ref{app},
$$
  I_2^\eps \to \int_0^T\int_\Omega\dashint_Y \sum_{i,j=1}^n
	P(y)\frac{a_{ij}(u)}{(s_j+1)u_j^{s_j}}
	\big(\na u_j^{s_j+1} +\na_y V_j\big)\cdot
	\big(\na\phi^0_i+\na_y\phi^1_i\big) dydxdt, 
$$
as $\eps \to 0$,  where $\dashint_Y(\cdots)dy=|Y|^{-1}\int_Y(\cdots)dy$. 
Hence, the limit $\eps\to 0$ in \eqref{ueps} leads to
\begin{align}
  -&\int_0^T\int_\Omega u\cdot\pa_t\phi^0 dxdt
	-\int_\Omega u^0\cdot\phi^0(0)dx \nonumber \\
	&\phantom{xx}{}
	+ \int_0^T\int_\Omega\dashint_Y\sum_{i,j=1}^n P(y)\frac{a_{ij}(u)}{(s_j+1)u_j^{s_j}}
	\big(\na u_j^{s_j+1} +\na_y V_j\big)\cdot
	\big(\na\phi^0_i+\na_y\phi^1_i\big) dydxdt  \label{limu}  \\
	&= \int_0^T\int_\Omega f(u)\cdot\phi^0 dxdt. \nonumber
\end{align}

Next, we need to identify $V_j$. For this, let first $\phi^0_i=0$ 
for $i=1,\ldots,n$ in \eqref{limu}. Then
\begin{equation}\label{macro_11}
  0 = \int_0^T\int_\Omega\int_Y \sum_{i,j=1}^n
	P(y)\frac{a_{ij}(u)}{(s_j+1)u_j^{s_j}}
	\big(\na u_j^{s_j+1}+\na_y V_j\big)\cdot\na_y\phi^1_i dydxdt.   
\end{equation}
We  insert the ansatz 
$$
  V_j(t,x,y) = \sum_{k=1}^d\sum_{\ell=1}^n\frac{\pa}{\pa x_k}
	u_\ell^{s_\ell +1}(t,x)W_j^{k\ell}(t,x,y), \quad 
	j = 1, \ldots, n,
$$
with functions $W_j^{k\ell}$, which need to be determined, in \eqref{macro_11}:
\begin{align*}
  0 &= \int_{\Omega_T}\int_Y \sum_{i,j=1}^n
	\frac{a_{ij}(u)}{(s_j+1)u_j^{s_j}} \sum_{m=1}^d P_m(y)\bigg( 
	\frac{\pa u_j^{s_j+1}}{\pa x_m} + \sum_{k=1}^d\sum_{\ell=1}^n
	\frac{\pa u_\ell^{s_\ell+1}}{\pa x_k}\frac{\pa W_j^{k\ell}}{\pa y_m} \bigg) 
	\frac{\pa \phi^1_i}{\pa y_m} dydxdt \\ 
  &= \int_{\Omega_T}\int_Y \sum_{k=1}^d\sum_{\ell=1}^n
	\frac{\pa u_\ell^{s_\ell+1}}{\pa x_k} 
	\sum_{i,j=1}^n\sum_{m=1}^d P_m(y)
	\frac{a_{ij}(u)}{(s_j+1)u_j^{s_j}}\bigg( \frac{\pa W_j^{k\ell}}{\pa y_m}   
	+ \delta_{km}\delta_{j\ell} \bigg)
	\frac{\pa \phi^1_i}{\pa y_m} dydxdt,
\end{align*}
where $\delta_{km}$ is the Kronecker symbol.
By the linear independence of $(\pa u_\ell^{s_\ell+1}/\pa x_k)_{k\ell}$,
we infer that 
$$
  0 = \int_Y  \sum_{i, j=1}^n \sum_{m=1}^d P_m(y)
	\frac{a_{ij}(u)}{(s_j+1)u_j^{s_j}} \bigg(\frac{\pa W_j^{k\ell}}{\pa y_m}    
	+ \delta_{km}\delta_{j\ell}\bigg) 	\frac{\pa \phi^1_i}{\pa y_m} dy, 
$$
for $k=1, \ldots, d$ and $\ell=1, \ldots, n$.  
This means that the functions $W_j^{k\ell}$ are solutions, if they exist, of the
linear elliptic cross-diffusion equations
$$
  \sum_{m=1}^d\sum_{i,j=1}^n\frac{\pa}{\pa y_m}\bigg(P_m(y)\widehat A_{ij}(u(x,t))
	\bigg(\frac{\pa W_j^{k\ell}}{\pa y_m} + \delta_{j\ell}\delta_{km}\bigg)\bigg) = 0,
$$
where
$$
  \widehat A_{ij}(u) = \frac{a_{ij}(u)}{(s_j+1)u_j^{s_j}}, \quad i,j=1,\ldots,n.
$$
More precisely, $W_j^{k\ell}$ are the solutions, if they exist, of the 
elliptic problem
\begin{equation}\label{unit_cell}
\begin{aligned}
  & \diver_y\big(P(y)\widehat A(u(x,t))(\na_y W^{k\ell} + e_ke_\ell)\big) = 0\quad
	\mbox{in }Y, \\
  & \int_Y W^{k\ell}(x,y,t)dy = 0, \quad W_j^{k\ell}\mbox{ is $Y$-periodic},
	\ j=1,\ldots,n,
\end{aligned}
\end{equation}
for $k=1,\ldots,d$ and $\ell=1,\ldots,n$, parametrized by $(x,t)\in \Omega_T$,
where $e_k$ and $e_\ell$ are the standard
basis vectors of $\R^d$ and $\R^n$, respectively, and $e_ke_\ell$ is the matrix
in $\R^{d\times n}$ with the elements $\delta_{km}\delta_{j\ell}$.
The solvability of \eqref{unit_cell} is proved in Lemma \ref{exist_unit_cell} below.

Setting $\phi^1=0$ and arguing similarly as above, 
we can write the macroscopic equations \eqref{limu} as
\begin{align}
  -\int_0^T\int_\Omega & \sum_{i=1}^n  u_i \pa_t \phi_i^0 dxdt 
	+ \int_0^T\int_\Omega\sum_{k,m=1}^d\sum_{i,\ell=1}^n B_{mk}^{i \ell }(u)
	\frac{\pa u_\ell}{\pa x_k}\frac{\pa\phi_i^0}{\pa x_m}dxdt \nonumber \\
	&= \int_0^T\int_\Omega\sum_{i=1}^n f_i(u)\phi_i^0 dxdt 
	+ \int_\Omega \sum_{i=1}^n u_{i}^0 \phi_i^0(0) dx, \label{eq:macro1}
\end{align}
where
\begin{equation}\label{B}
  B_{mk}^{i\ell }(u) = \sum_{j=1}^n\bigg(
	a_{ij}(u)\delta_{km}\delta_{j\ell} \dashint_Y P_m(y)dy
	 + \frac{a_{ij}(u)(s_\ell +1) u_\ell^{s_\ell}}{(s_j+1)u_j^{s_j}}  
	\dashint_Y P_m(y)\frac{\pa W_j^{k \ell}}{\pa y_m}dy\bigg).
\end{equation}

From equation \eqref{eq:macro1} and $u_i^{s_i+1}\in L^2(0, T; H^1(\Omega))$, 
we obtain  $\partial_t u \in L^2(0,T;H^1(\Omega;$ $\R^n)')$. 
This, together with the boundedness of $u_i$, implies that
$\pa_t u_i^{s_i + 1} \in L^2(0,T;H^1(\Omega)')$ for $s_i > 0$. 
Hence, $u_i \in L^2(0,T;H^1(\Omega))\cap H^1(0,T;H^1(\Omega)')$ for those $i$ 
satisfying $-1<s_i \leq 0$ and $u_i^{s_i+1} \in L^2(0,T;H^1(\Omega))\cap 
H^1(0,T;H^1(\Omega)')$ if $s_i>0$.  Therefore, $u_i$, $u_j^{s_j+1} \in 
C^0([0,T];L^2(\Omega))$ for those $i,j=1, \ldots, n$ satisfying 
$-1<s_i\leq 0$ and $s_j >0$. Consequently, the initial datum
is satisfied in the sense of $L^2(\Omega)$.

{\em Step 2: problem \eqref{2.eq}-\eqref{2.bic}.}
We use the two-scale convergence of $\nabla (u^\eps_i)^{s_i+1}$ and take the limit 
$\eps \to 0$ in the weak formulation of \eqref{2.eq}, i.e.   
$$
  \int_0^T\langle\pa_t u^\eps,\phi^\eps\rangle dt
	+ \int_0^T\int_{\Omega^\eps}\sum_{i,j=1}^n
	\frac{a_{ij}(u^\eps)}{(s_j+1)(u_j^\eps)^{s_j}}\na(u_j^\eps)^{s_j+1}
	\cdot\na\phi_i^\eps dxdt  = \int_{\Omega^\eps_T} f(u^\eps)\cdot\phi^\eps dxdt 
$$
to obtain the macroscopic equation
\begin{align}
  &\int_0^T\int_\Omega\dashint_{Y_1} \sum_{i,j=1}^n
	\frac{a_{ij}(u)}{(s_j+1)u_j^{s_j}}\big(\na u_j^{s_j+1} + \na_y V_j\big)\cdot
	\big(\na\phi^0_i+\na_y\phi^1_i\big) dydxdt \nonumber \\
  &\phantom{xx}{}- \int_0^T \int_\Omega  u\cdot \pa_t \phi^0  dxdt  
	- \int_\Omega u^0 \cdot  \phi^0 (0) dx  
	= \int_0^T \int_\Omega  f(u)\cdot \phi^0 dx dt. \label{macro_perfor} 
\end{align}  
Repeating the calculations from Step 1, we arrive at the macroscopic problem 
\eqref{macro_1} with the macroscopic diffusion matrix 
\begin{equation} \label{B_2}
  B_{mk}^{i\ell }(u) = \sum_{j=1}^n\bigg(a_{ij}(u)\delta_{km}\delta_{j\ell} 
	 + \frac{a_{ij}(u)(s_\ell + 1) u_\ell^{s_\ell}}{(s_j+1)u_j^{s_j}}  
	\dashint_{Y_1}\frac{\pa \widehat W_j^{k \ell}}{\pa y_m}dy\bigg). 
\end{equation}
where $\widehat W^{k\ell}$ for $k=1, \ldots, d$ and $\ell=1, \ldots, n$ are the
solutions of the unit-cell problem 
\begin{equation}\label{unit_cell_12}
\begin{aligned}
  & \diver_y\big(\widehat A(u(x,t))(\na_y \widehat W^{k\ell} 
	+ e_{\ell}e_{k})\big) = 0 \quad\mbox{in }Y_1, \quad
	\int_{Y_1}\widehat W^{k\ell} (x,y,t) dy = 0, \\
	& \widehat A(u(x,t))(\na_y \widehat W^{k\ell} + e_{\ell}e_{k})\cdot\nu = 0 
	\quad\text{on }\Gamma, \quad
  \widehat W^{k\ell}_j \mbox{ is }Y\text{-periodic}, 
\end{aligned}
\end{equation}
where $j=1, \ldots, n$. This finishes the proof.
\end{proof}

It remains to prove the solvability of the unit-cell problems.

\begin{lemma}[Solvability of the unit-cell problem] \label{exist_unit_cell}
There exist weak solutions of the unit-cell problems \eqref{unit_cell} and 
\eqref{unit_cell_12}, respectively. The solutions are unique on 
$\{u_i>0:i=1,\ldots,n\}$.
\end{lemma}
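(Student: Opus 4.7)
The plan is to establish existence by Lax-Milgram on a reformulation that exploits the entropy structure of Assumption A2, together with a regularization to handle the degeneracy when some $u_i$ vanishes. I would first treat $u\in\dom$ with all $u_i>0$. Since $u$ does not depend on $y$, the change of unknown $V_j = W_j^{k\ell}/((s_j+1) u_j^{s_j})$ turns \eqref{unit_cell} into
\begin{equation*}
  \diver_y\big(P(y)A(u)\na_y V\big) = -\frac{1}{(s_\ell+1) u_\ell^{s_\ell}}\,\pa_{y_k}\big(P_k(y) A_{\cdot\ell}(u)\big),
\end{equation*}
sought on $\mathcal V = \{\phi\in H^1_{\rm per}(Y;\R^n):\int_Y\phi\,dy = 0\}$. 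Because $h''(u)$ is a fixed invertible symmetric matrix, $\psi\mapsto h''(u)\psi$ is an isomorphism of $\mathcal V$, and the weak form is equivalent to: find $V\in\mathcal V$ with $\widetilde B(V,\psi) := \int_Y P(y) (h''(u) A(u)) \na_y V : \na_y \psi \,dy = \widetilde L(\psi)$ for all $\psi\in\mathcal V$. Assumption A2 gives $\widetilde B(V,V)\ge \alpha d_0 \sum_{i=1}^n u_i^{2s_i}\|\na V_i\|_{L^2(Y)}^2$; since every $u_i>0$, this together with Poincar\'e--Wirtinger on zero-mean periodic fields yields coercivity on $\mathcal V$, and Lax-Milgram produces a unique $V$, hence a unique $W^{k\ell}$.

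For general $u\in\overline\dom$, I would regularize by $u^\delta = u+\delta(1,\ldots,1)$ and apply the previous step to obtain a unique $W^{k\ell,\delta}$. The algebraic identity $u_i^{2s_i}|\na V_i|^2 = |\na W_i|^2/(s_i+1)^2$ re-expresses the coercivity as $\widetilde B(V^\delta,V^\delta) \ge c\|\na W^{k\ell,\delta}\|_{L^2(Y)}^2$ with $c$ independent of $\delta$. For the right-hand side, writing
\begin{equation*}
  \widetilde L(V^\delta) = -\int_Y P_k(y)\sum_{p=1}^n \frac{(h''(u^\delta) A(u^\delta))_{p\ell}}{(s_\ell+1)(u_\ell^\delta)^{s_\ell}}\,\pa_{y_k} V_p^\delta\, dy
\end{equation*}
and using Assumption A6 to bound the coefficient by $C(u_p^\delta)^{s_p}/(s_\ell+1)$ together with $(u_p^\delta)^{s_p}|\na V_p^\delta| = |\na W_p^\delta|/(s_p+1)$, I obtain $|\widetilde L(V^\delta)|\le C\|\na W^{k\ell,\delta}\|_{L^2(Y)}$ uniformly in $\delta$. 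Combining gives $\|\na W^{k\ell,\delta}\|_{L^2(Y)}\le C$ uniformly, and Poincar\'e--Wirtinger then a uniform $H^1$-bound. Extracting a weak limit $W^{k\ell,\delta}\rightharpoonup W^{k\ell}$ and using the uniform $L^\infty$-bound on $\widehat A(u^\delta)$ from A3 to pass to the limit in the linear weak formulation yields existence. Uniqueness on $\{u_i>0\}$ is immediate from the Lax-Milgram step. Problem \eqref{unit_cell_12} is handled identically on $Y_1$, with the Neumann condition on $\Gamma$ naturally absorbed into the weak formulation.

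The principal obstacle I anticipate is that $\widehat A(u)$ is neither symmetric nor sign-definite, and its entries can blow up or vanish as $u_i\to 0$, so the bilinear form associated with $\widehat A$ is not coercive on $H^1$ in any direct way. The combined substitution $V = W/((s_j+1)u_j^{s_j})$ together with the entropic test function $\phi = h''(u) V$ is what converts A2 into a coercivity estimate whose constant, measured in the $\na W$-norm, does not degenerate with $\delta$; this is the essential ingredient that allows the regularization limit to be closed.
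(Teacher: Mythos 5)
Your proof takes essentially the same route as the paper's. You substitute $V_j = W_j^{k\ell}/((s_j+1)u_j^{s_j})$ (the paper uses the inconsequential rescaling $\widetilde W_j = W_j^{k\ell}/u_j^{s_j}$), conjugate the weak formulation by the Hessian $h''(u)$ so that Assumption A2 delivers coercivity, invoke Lax--Milgram for the regularized problem, obtain a $\delta$-uniform bound on $\|\nabla_y W^{k\ell,\delta}\|_{L^2(Y)}$ via Assumption A6 (this is exactly the paper's Young-inequality absorption, phrased slightly differently), and pass to the weak limit; uniqueness on $\{u_i>0\}$ is the Lax--Milgram step without regularization. The perforated-domain case is handled the same way. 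All of this matches the paper.

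The one genuine issue is your regularization $u^\delta = u + \delta(1,\ldots,1)$: since $\dom\subset(0,1)^n$, whenever some $u_j$ is near $1$ this shifts $u^\delta$ outside $(0,1)^n$ (hence outside $\overline\dom$), and Assumptions A2, A3 and A6 are then not applicable to $u^\delta$ --- the coercivity estimate, the bound on $\widehat A(u^\delta)$, and the $\delta$-uniform bound on the linear functional all rest on evaluating these assumptions at $u^\delta$. The paper avoids this by taking $u_{\delta,j} = (u_j+\delta/2)/(1+\delta)$, which keeps $u_{\delta,j}$ strictly inside $(0,1)$ for every $\delta>0$. Replace your additive regularization by a convex-combination one of this type and the rest of your argument goes through unchanged.
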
 

\begin{proof}  
Let us first consider problem \eqref{unit_cell}. Since $\widehat A(u(x,t))$ may vanish,
the unit-cell problem is of degenerate type. Therefore, we introduce the regularization 
\begin{align}
  & \diver_y\big(P(y) \widehat A(u_\delta(x,t))(\na_y W_\delta^{k\ell} 
	+ e_k e_{\ell})\big)	= 0 \quad\text{in } Y, \nonumber \\
	& \int_Y W_{\delta, j}^{k\ell} (x,y,t)dy = 0, \quad
  W^{k\ell}_{\delta,j}\mbox{ is }Y\text{-periodic},\ j =1, \ldots, n, 
	\label{unit_cell_regular}
\end{align}
where $u_{\delta,j}(x,t) = (u_j(x,t)+\delta/2)/(1+\delta)$ for $j=1,\ldots,n$.
Since $0\le u_j(x,t)\le 1$, it follows that 
$0<\delta/(2+2\delta)\le u_{\delta,j}\le (2+\delta)/(2+2\delta)<1$, which
avoids the degeneracy in Assumption A2. Furthermore, we define
$$
  \widetilde W^{k\ell}_{\delta, j}(x,y,t)
	:= \frac{W^{k\ell}_{\delta, j}(x,y,t)}{u_{\delta, j}^{s_j}(x,t)}, 
	\quad j = 1, \ldots, n.  
$$
Then $\widetilde W^{k\ell}_\delta$ satisfies the problem
\begin{align}
  & \diver_y\big(P(y) (A(u_\delta(x,t))\na_y \widetilde W^{k\ell}_\delta 
	+ \widehat A(u_\delta(x,t))e_{k} e_{\ell})\big) = 0 \quad\text{in } Y, \nonumber \\
	& \int_Y \widetilde  W^{k\ell}_{\delta, j}(x,y,t)dy = 0,  \quad
  \widetilde W^{k\ell}_{\delta, j}\mbox{ is }Y\text{-periodic}, \ j = 1,\ldots,n.   
	\label{unit_cell_ref}
\end{align}
Notice that $u_\delta(x,t)$ is independent of $y\in Y$. The weak formulation
of the elliptic problem reads as
\begin{align*}
  0 &= \int_Y \sum_{i,j=1}^n\sum_{m=1}^d P_m(y)\bigg(a_{ij}(u_\delta(x,t))
	\pa_{y_m}\widetilde W_{\delta,j}^{k\ell} 
	+ \widehat a_{ij}(u_\delta(x,t))\delta_{j\ell}\delta_{km}
	 \bigg)\pa_{y_m}\psi_i dy \\
	&= \int_Y \bigg(\sum_{i,j=1}^n P(y)a_{ij}(u_\delta(x,t))
	\na_{y}\widetilde W_{\delta,j}^{k\ell}\cdot\na_y \psi_i
	+ \sum_{i=1}^n P_k(y)\widehat a_{i\ell}(u_\delta(x,t))
	\frac{\pa \psi_i}{\pa y_k}\bigg)dy.
\end{align*}
We take the test function $\psi_i(x,y,t) 
= \sum_{m=1}^n \pa_{im}h(u_\delta(x,t))\phi_m(y)$,
where $\pa_{im} h = \pa^2 h/(\pa \xi_i$ $\pa \xi_m)$ and $\phi_m$ 
is another test function:
$$
  0 = \int_Y \bigg(\sum_{i,j,m=1}^n P(y) \pa_{im}h(u_\delta)a_{ij}(u_\delta)
	\na_y\widetilde W_{\delta,j}^{k\ell}\cdot\na_y\phi_m
	+ \sum_{i,m=1}^n P_k(y) \pa_{im}h(u_\delta)\widehat a_{i\ell}(u_\delta)
	\frac{\pa \phi_m}{\pa y_k}\bigg)dy.
$$
We rename $m\mapsto i$ and $i\mapsto m$ and use the symmetry of the Hessian
$(\pa_{im}h)$:
\begin{align}
  0 &= \int_Y \bigg(\sum_{i,j,m=1}^n P(y) \pa_{im}h(u_\delta)a_{mj}(u_\delta)
	\na_y\widetilde W_{\delta,j}^{k\ell}\cdot\na_y\phi_i
	+ \sum_{i,m=1}^n P_k(y) \pa_{im}h(u_\delta)\widehat a_{m\ell}(u_\delta)
	\frac{\pa \phi_i}{\pa y_k}\bigg)dy \nonumber \\
  &= \int_Y\bigg(\sum_{i,j=1}^n P(y) \big(h''(u_\delta) A(u_\delta)\big)_{ij} 
	\na_y \widetilde W^{k\ell}_{\delta, j}\cdot \na_y \phi_i 
	+ \sum_{i=1}^n P_k(y)\big(h''(u_\delta) \widehat A(u_\delta)\big)_{i\ell}  
	\frac{\pa \phi_i}{\pa y_k}\bigg) dy. \label{weak_regular}
\end{align}
The assumptions on $A(u_\delta)$ and $h(u_\delta)$ imply that 
$h''(u_\delta)A(u_\delta)$ is positive definite in $\Omega_T$, giving
coercivity of the elliptic problem.
Furthermore, for any fixed $\delta>0$, the coefficients of
$h''(u_\delta)A(u_\delta)$ are uniformly bounded. 
Therefore, we can apply the Lax-Milgram lemma to conclude the existence
of a unique solution $\widetilde W^{k\ell}_{\delta}(x,\cdot,t)\in 
H^1_{\rm per}(Y;\R^n)$ of problem \eqref{weak_regular}. As $h''(u_\delta)$ is
invertible, we may consider $\phi=h''(u_\delta)^{-1}\psi$ as a test function
in \eqref{weak_regular}, which means that the function
$W^{k\ell}_{\delta, j}(x,\cdot,t) = u_{\delta,j}^{s_j}(x,t)
\widetilde W^{k\ell}_{\delta, j}(x,\cdot,t)$ for $j=1,\ldots,n$
also solves \eqref{unit_cell_regular}.

The next step is the derivation of bounds uniform in $\delta$.
To this end, we take the test function $\widetilde W^{k\ell}_{\delta}(x,\cdot,t)$
in \eqref{weak_regular}, take into account the lower bound $P_k(y)\ge d_0>0$ for 
$k=1, \ldots, d$, and the definition of $\widehat A(u_\delta(x,t))$, 
and apply the Cauchy-Schwarz inequality. This leads for any $\sigma>0$ to
\begin{align*}
  d_0\int_Y\sum_{j=1}^n u_{\delta,j}^{2s_j}\big|\na_y\widetilde W^{k\ell}_{\delta,j} 
	\big|^2 dy 
  &\leq C_{\sigma}\int_Y\sum_{i,\ell=1}^n
  \frac{(h''(u_\delta)A(u_\delta))_{i\ell}^2}{u_{\delta, i}^{2s_i} 
	u_{\delta,\ell}^{2s_\ell}}dy  
	+ \sigma\int_Y\sum_{j=1}^n u_{\delta, j}^{2s_j} 
	|\na_y \widetilde W^{k\ell}_{\delta, j} |^2 dy.
\end{align*}
Choosing $\sigma=d_0/2$ and using Assumption A6, we find that
$$
  \frac{d_0}{2}\int_Y\sum_{j=1}^n\big|\na_y W^{k\ell}_{\delta,j} \big|^2 dy
  \leq C_{d_0/2}\int_Y\sum_{i,\ell=1}^n
  \frac{(h''(u_\delta)A(u_\delta))_{i\ell}^2}{u_{\delta, i}^{2s_i} 
	u_{\delta,\ell}^{2s_\ell}}dy \le C,
$$
where $C>0$ does not depend on $\delta$. As the mean of $W_{\delta,j}^{k\ell}$
vanishes, the Poincar\'e-Wirtinger inequality gives a uniform estimate in
$H^1(Y)$.

The uniform estimate for $W^{k\ell}_{\delta}$
implies the existence of a subsequence, which is not relabeled, such that
$W^{k\ell}_\delta \rightharpoonup W^{k\ell}$ weakly in $H^1(Y;\R^n)$ as $\delta\to 0$,
for $x\in \Omega$ and $t>0$. Hence, we can pass to the limit $\delta\to 0$ in
\eqref{unit_cell_regular} to conclude that $W^{k\ell}$ is a solution of  
\eqref{unit_cell}. 

We claim that the solution is unique on the set $\{(x,t):u_i(x,t)>0$ for 
$i=1,\ldots,n\}$. Indeed, taking two
solutions $W^{k\ell}_{(1)}$ and $W^{k\ell}_{(2)}$ of \eqref{unit_cell}, choosing
$(x,t)$ such that $u_i(x,t)>0$ and arguing as before, we obtain
$$
  \|\na_y (W^{k\ell}_{(1),i} - W^{k\ell}_{(2),i})\|_{L^2(Y;\R^{d})} \leq 0
$$
for $k=1,\ldots,d$ and $i,\ell=1,\ldots,n$. This implies that
$W^{k\ell}_{(1)}=W^{k\ell}_{(2)}$ and proves the claim.  

The same arguments ensure also the existence of a solution of the unit-cell problem
\eqref{unit_cell_12} and its uniqueness for those $x\in\Omega$ and $t>0$ 
satisfying $u_i(t,x)>0$ for all $i=1,\ldots,n$. 
\end{proof}


\section{Proof of Theorem \ref{main_filling}}\label{sec_proof2}
First, we state an existence result which follows from \cite{ZaJu17}.

\begin{lemma}[Entropy inequality] \label{Lemma_entropy}
There exists a weak solution $u^\eps=(u_1^\eps,\ldots,u_n^\eps)$ of 
problem \eqref{1.eq}-\eqref{1.bic} with the diffusion matrix \eqref{1.nonloc} 
in the sense of Definition \ref{def.weak}. This solution satisfies the
entropy inequality
\begin{equation}\label{4.epi}
  \int_\Omega h(u^\eps)dx + C\int_0^T\int_\Omega\sum_{i=1}^n \big(u_{n+1}^\eps
	|\na(u_i^\eps)^{1/2}|^2 + |\na(u_{n+1}^\eps)^{1/2}|^2\big)dxdt
	\le \int_\Omega h(u^0)dx,
\end{equation}
where $C=d_0\min_{i=1,\ldots,n}D_i$.
A similar estimate with $\Omega$ replaced by $\Omega^\eps$ holds for solutions of 
problem \eqref{2.eq}-\eqref{2.bic} with the diffusion matrix \eqref{1.nonloc}.
\end{lemma}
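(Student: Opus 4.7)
The strategy is to use the construction of \cite{ZaJu17}, which proves the analogous result in the case $P\equiv I$ on the whole domain $\Omega$, and to verify that the changes required for the spatially periodic diagonal matrix $P^\eps(x)$ and for the perforated geometry $\Omega^\eps$ are essentially notational. Fix $\eps>0$. Following \cite{ZaJu17}, I would regularize the entropy $h$ (which is singular on $\pa\dom$) to obtain $h_\delta$ with a uniformly positive-definite Hessian and a bijective gradient map $h_\delta':\dom\to\R^n$, and then reformulate \eqref{1.eq} in the entropy variable $w=h_\delta'(u^\eps)$, so that $u^\eps=(h_\delta')^{-1}(w)\in\dom$ encodes positivity and the upper bound $\sum_i u_i<1$ automatically. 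A semi-discretization in time combined with a finite-dimensional Galerkin scheme and Brouwer's fixed-point theorem then produces approximate solutions $u^\eps_{\tau,\delta,N}$. The matrix $P^\eps$ creates no additional difficulty at this stage, since $d_0\le P_k(x/\eps)\le\|P_k\|_{L^\infty(Y)}$ preserves uniform coercivity and continuity of the bilinear form.

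To obtain \eqref{4.epi}, I would test the semi-discrete equation with the entropy variable $w^\eps=h_\delta'(u^\eps)$ itself. Convexity of $h_\delta$ turns the time-increment term into the telescoping entropy difference, while the diffusion contribution produces
\begin{equation*}
\int_\Omega\na u^\eps : P^\eps(x)\, h_\delta''(u^\eps)A(u^\eps)\na u^\eps\,dx.
\end{equation*}
Since $P^\eps$ acts diagonally on spatial gradients and satisfies $P_k\ge d_0$, while $h_\delta''(u)A(u)$ is symmetric positive semi-definite in the species indices, the integrand is bounded below pointwise by $d_0\,\na u^\eps:h_\delta''(u^\eps)A(u^\eps)\na u^\eps$. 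The explicit pointwise dissipation identity derived in \cite[Sec.~4.6]{Jue16} and \cite[Thm.~1]{ZaJu17} then yields the lower bound $\min_i D_i\sum_{i=1}^n\bigl(u_{n+1}^\eps|\na(u_i^\eps)^{1/2}|^2+|\na(u_{n+1}^\eps)^{1/2}|^2\bigr)$, giving \eqref{4.epi} with $C=d_0\min_i D_i$ at the approximate level. Summing over time steps and passing to the limit in the Galerkin index, then in $\tau$, and finally in $\delta$, the inequality survives by weak lower semicontinuity of the dissipation functional in its natural variables $(u_{n+1}^\eps)^{1/2}$ and $(u_{n+1}^\eps)^{1/2}u_i^\eps$.

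The main obstacle is the passage to the limit in the degenerate flux, since the entropy estimate controls neither $\na u_i^\eps$ nor $\na u_{n+1}^\eps$ individually in $L^2$. Following \cite{ZaJu17}, I would rewrite the flux in the form appearing in \eqref{weak_2}, namely $D_i(u_{n+1}^\eps)^{1/2}\bigl(\na((u_{n+1}^\eps)^{1/2}u_i^\eps)-3u_i^\eps\na(u_{n+1}^\eps)^{1/2}\bigr)$, which involves precisely the factors controlled by \eqref{4.epi}; an Aubin-Lions argument then delivers strong $L^2(\Omega_T)$ convergence of $(u_{n+1}^\eps)^{1/2}$ and of $(u_{n+1}^\eps)^{1/2}u_i^\eps$, enough to identify the nonlinear limit in the sense of Definition \ref{def.weak}. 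The perforated-domain case \eqref{2.eq}-\eqref{2.bic} is handled identically on $\Omega^\eps$ with $P^\eps\equiv I$; the no-flux condition on the interior boundary $\Gamma^\eps$ is encoded in the weak formulation and contributes no boundary term to the entropy identity.
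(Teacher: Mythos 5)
Your proposal is correct in its essential steps and matches the route the paper sketches in the short ``Proof ideas'' paragraph: regularize, adapt the semi-discrete Galerkin construction from \cite{ZaJu17}, derive the entropy dissipation at the approximate level (with the extra factor $P^\eps$ handled via $P_k\ge d_0$, giving the constant $C=d_0\min_i D_i$), and pass to the limit. The key algebraic observations you make --- that $P^\eps$ acts diagonally on the spatial index so the pointwise dissipation can be bounded below by $d_0\,\na u:h''(u)A(u)\na u$, and that the degenerate flux can be rewritten as $D_i(u_{n+1}^\eps)^{1/2}\bigl(\na((u_{n+1}^\eps)^{1/2}u_i^\eps)-3u_i^\eps\na(u_{n+1}^\eps)^{1/2}\bigr)$ using $\sum_i\na u_i^\eps=-\na u_{n+1}^\eps$ --- are exactly right.

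However, the paper's detailed argument in Appendix~\ref{sec.lemma} takes a genuinely different route. Rather than re-deriving the existence theory and tracking the inequality through all approximation parameters, the Appendix works \emph{a posteriori}: it starts from an already-constructed weak solution $u^\eps$ in the sense of Definition~\ref{def.weak} and derives~\eqref{4.epi} by testing the weak formulation with a doubly-regularized entropy variable
$\phi^\eps_{\sigma,\delta}=\frac{(u_{n+1}^\eps)^{1/2}}{(u_{n+1}^\eps)^{1/2}+\sigma}\,h'(u_\delta^\eps)$,
where $u^\eps_\delta$ shifts $u^\eps$ slightly into the interior of $\dom$. The factor $(u_{n+1}^\eps)^{1/2}/((u_{n+1}^\eps)^{1/2}+\sigma)$ is what makes the test function admissible: the gradient $\na\phi^\eps_{\sigma,\delta,i}$ is in $L^2$ precisely because the singular terms $\na u_i^\eps/u_i^\eps$ are always accompanied by a factor $(u_{n+1}^\eps)^{1/2}$, which Definition~\ref{def.weak} controls. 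One then sends $\sigma\to0$ followed by $\delta\to0$, using dominated convergence for the cutoff factors. This sidesteps the point your proposal glosses over somewhat --- the lower semicontinuity of the dissipation functional $\int u_{n+1}^\eps|\na(u_i^\eps)^{1/2}|^2$ under the limit in the approximation parameters. That functional is \emph{not} obviously lower semicontinuous in the variables $(u_{n+1}^\eps)^{1/2}$ and $(u_{n+1}^\eps)^{1/2}u_i^\eps$ (the pointwise expression involves the unbounded factor $u_{n+1}^\eps/u_i^\eps$), so in the approximate-limit approach one must either work with the entropy dissipation in a convex-in-gradients form or argue via Fatou and pointwise convergence of gradients, as in \cite{ZaJu17}. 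The direct test-function argument in Appendix~\ref{sec.lemma} gets the inequality at the level of the given weak solution without ever having to resolve this issue, which is the main thing it buys.

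Your treatment of the perforated-domain case is fine: the no-flux condition on $\Gamma^\eps$ is built into the weak formulation, so the integrations by parts produce no interior boundary terms, and the constants in the dissipation estimate are purely pointwise and hence independent of $\eps$.
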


\begin{proof}[Proof ideas]
The existence of a weak solution $u^\eps$ follows from Theorem 1 in \cite{ZaJu17}
for $p_i(u)=D_i$ ($i=1,\ldots,n$) and $q(s)=s$.  The entropy inequality 
\eqref{4.epi} follows from inequality (33) in \cite{ZaJu17} 
in the regularization limit.  A direct proof of estimate \eqref{4.epi} using 
the definition of a weak solution of \eqref{1.eq}-\eqref{1.bic} 
or \eqref{2.eq}-\eqref{2.bic} with the 
diffusion matrix \eqref{1.nonloc} can be found in Appendix~\ref{sec.lemma}.  
\end{proof}

\begin{lemma}[A priori estimates]\label{lem.est2}
Weak solutions of \eqref{1.eq}-\eqref{1.bic} with 
diffusion matrix \eqref{1.nonloc} satisfy 
\begin{align}
  & \|(u_{n+1}^\eps)^{1/2} u_i^\eps\|_{L^2(0,T;H^1(\Omega))} \le C, \nonumber \\
	& \|(u_{n+1}^\eps)^{1/2}\|_{L^2(0,T;H^1(\Omega))}
	+ \|(u_{n+1}^\eps)^{3/2}\|_{L^2(0,T;H^1(\Omega))} \le C, \nonumber \\
	& \|\vartheta_\tau u_{n+1}^\eps - u_{n+1}^\eps\|_{L^{5/2}(\Omega_T)}
	\le C\tau^{1/5}, \label{apriori_2} \\
	& \|\vartheta_\tau((u_{n+1}^\eps)^{1/2}u_i^\eps)
	- (u_{n+1}^\eps)^{1/2}u_i^\eps\|_{L^2(\Omega_{T-\tau})}
	\le C\tau^{1/10}, \nonumber
\end{align}
for  all $\eps>0$ and $i=1,\ldots,n$, where  $\vartheta_\tau v(x,t)=v(x,t+\tau)$
for $t \in (0, T-\tau)$ and $\tau \in (0, T)$ and  
the constant $C>0$ is independent of $\eps$. 
\end{lemma}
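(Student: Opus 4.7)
The first two bounds follow directly from the entropy inequality of Lemma \ref{Lemma_entropy}: \eqref{4.epi} gives $L^1(\Omega_T)$-bounds on $u_{n+1}^\eps|\na(u_i^\eps)^{1/2}|^2$ and $|\na(u_{n+1}^\eps)^{1/2}|^2$. Using
\[
\na\big((u_{n+1}^\eps)^{1/2}u_i^\eps\big)=2(u_{n+1}^\eps)^{1/2}(u_i^\eps)^{1/2}\na(u_i^\eps)^{1/2}+u_i^\eps\na(u_{n+1}^\eps)^{1/2}
\]
and $u_i^\eps,u_{n+1}^\eps\le 1$ gives the $L^2(0,T;H^1(\Omega))$-bound on $V_i^\eps:=(u_{n+1}^\eps)^{1/2}u_i^\eps$, while $\na(u_{n+1}^\eps)^{3/2}=3u_{n+1}^\eps\na(u_{n+1}^\eps)^{1/2}$ together with $u_{n+1}^\eps\le 1$ takes care of $(u_{n+1}^\eps)^{3/2}$.

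For the third estimate I plan to test \eqref{weak_2} with the common (independent of $i$) function $\phi_i(x,t)=\int_{t-\tau}^t\kappa(\sigma)\big(\vartheta_\tau(u_{n+1}^\eps)^{3/2}-(u_{n+1}^\eps)^{3/2}\big)(x,\sigma)\,d\sigma$ and sum over $i=1,\ldots,n$. Since $\sum_i\pa_t u_i^\eps=-\pa_t u_{n+1}^\eps$, the same integration-by-parts identity as in the proof of Lemma \ref{lem.est} produces on the left the quantity $\int_0^{T-\tau}\!\int_\Omega(\vartheta_\tau u_{n+1}^\eps-u_{n+1}^\eps)\big(\vartheta_\tau(u_{n+1}^\eps)^{3/2}-(u_{n+1}^\eps)^{3/2}\big)dx\,dt$, which by Lemma \ref{lem.ineq} with $p=3/2$ is bounded below by $\int|\vartheta_\tau u_{n+1}^\eps-u_{n+1}^\eps|^{5/2}dx\,dt$. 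The remaining flux integral, after Fubini and Cauchy-Schwarz in the inner time variable, is $O(\tau^{1/2})$ because the flux in \eqref{weak_2} and $\na(u_{n+1}^\eps)^{3/2}$ are in $L^2(\Omega_T)$ uniformly in $\eps$. This yields \eqref{apriori_2}.

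The fourth estimate is the main technical point: $u_i^\eps$ alone has no uniform spatial gradient. The key observation is that the product $u_i^\eps u_{n+1}^\eps$ does, since $(u_{n+1}^\eps)^{1/2}\na u_i^\eps=\na V_i^\eps-u_i^\eps\na(u_{n+1}^\eps)^{1/2}$ yields $\na(u_i^\eps u_{n+1}^\eps)=(u_{n+1}^\eps)^{1/2}\na V_i^\eps+u_i^\eps(u_{n+1}^\eps)^{1/2}\na(u_{n+1}^\eps)^{1/2}\in L^2(\Omega_T)$ uniformly in $\eps$. I therefore take $\phi_i(x,t)=\int_{t-\tau}^t\kappa(\sigma)(\vartheta_\tau(u_i^\eps u_{n+1}^\eps)-u_i^\eps u_{n+1}^\eps)(\sigma)\,d\sigma$ in the $i$-th equation. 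The decomposition
\[
\vartheta_\tau(u_i^\eps u_{n+1}^\eps)-u_i^\eps u_{n+1}^\eps=u_{n+1}^\eps(\vartheta_\tau u_i^\eps-u_i^\eps)+\vartheta_\tau u_i^\eps(\vartheta_\tau u_{n+1}^\eps-u_{n+1}^\eps)
\]
splits the time-derivative part into $\int u_{n+1}^\eps(\vartheta_\tau u_i^\eps-u_i^\eps)^2\,dx\,dt$ plus a cross term controlled in absolute value by $C\|\vartheta_\tau u_{n+1}^\eps-u_{n+1}^\eps\|_{L^1(\Omega_{T-\tau})}\le C\tau^{1/5}$ (by H\"older and the previous step), while the flux integral is again $O(\tau^{1/2})$; hence $\int u_{n+1}^\eps(\vartheta_\tau u_i^\eps-u_i^\eps)^2\,dx\,dt\le C\tau^{1/5}$. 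Finally, writing $\vartheta_\tau V_i^\eps-V_i^\eps=(u_{n+1}^\eps)^{1/2}(\vartheta_\tau u_i^\eps-u_i^\eps)+\vartheta_\tau u_i^\eps[\vartheta_\tau(u_{n+1}^\eps)^{1/2}-(u_{n+1}^\eps)^{1/2}]$ and using $(a+b)^2\le 2a^2+2b^2$ together with $|a^{1/2}-b^{1/2}|^2\le|a-b|$ (Lemma \ref{lem.ineq} with $p=2$ applied to $a^{1/2},b^{1/2}$), one obtains $\int(\vartheta_\tau V_i^\eps-V_i^\eps)^2\,dx\,dt\le 2\int u_{n+1}^\eps(\vartheta_\tau u_i^\eps-u_i^\eps)^2\,dx\,dt+2\int|\vartheta_\tau u_{n+1}^\eps-u_{n+1}^\eps|\,dx\,dt\le C\tau^{1/5}$, i.e.\ the desired $\tau^{1/10}$-rate in $L^2$. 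The hard part is precisely this fourth estimate: without passing through the auxiliary test function $u_i^\eps u_{n+1}^\eps$, the nonlocal degeneracy $(u_{n+1}^\eps)^{1/2}$ obstructs any direct control of $\vartheta_\tau u_i^\eps-u_i^\eps$, and one only recovers $V_i^\eps$ after converting the weighted quadratic form $u_{n+1}^\eps(\vartheta_\tau u_i^\eps-u_i^\eps)^2$.
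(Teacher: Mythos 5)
Your proof is correct, and for the first three estimates it follows essentially the same path as the paper: the entropy inequality \eqref{4.epi} gives the gradient bounds, the algebraic identities for $\na((u_{n+1}^\eps)^{1/2}u_i^\eps)$ and $\na(u_{n+1}^\eps)^{3/2}$ handle the $H^1$-estimates, and the test function $\int_{t-\tau}^t\kappa(\sigma)(\vartheta_\tau(u_{n+1}^\eps)^{3/2}-(u_{n+1}^\eps)^{3/2})\,d\sigma$ inserted in the sum over $i$ together with Lemma \ref{lem.ineq} ($p=3/2$) and Cauchy--Schwarz gives the $L^{5/2}$-rate $\tau^{1/5}$.

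For the fourth estimate you take a genuinely different route. The paper uses the test function $\phi_i=\int_{t-\tau}^t\kappa(\sigma)\,\vartheta_\tau(u_{n+1}^\eps)^{1/2}\bigl(\vartheta_\tau V_i^\eps-V_i^\eps\bigr)\,d\sigma$ with $V_i^\eps=(u_{n+1}^\eps)^{1/2}u_i^\eps$, so that the time-derivative term \emph{directly} produces $\int(\vartheta_\tau V_i^\eps-V_i^\eps)^2$ plus a cross term $\int u_i^\eps(\vartheta_\tau(u_{n+1}^\eps)^{1/2}-(u_{n+1}^\eps)^{1/2})(\vartheta_\tau V_i^\eps-V_i^\eps)$, which is absorbed by Cauchy--Schwarz and the $\tau^{1/10}$-rate for $(u_{n+1}^\eps)^{1/2}$. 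You instead test with $\phi_i=\int_{t-\tau}^t\kappa(\sigma)\bigl(\vartheta_\tau(u_i^\eps u_{n+1}^\eps)-u_i^\eps u_{n+1}^\eps\bigr)d\sigma$, i.e.\ you first observe that $u_i^\eps u_{n+1}^\eps$ has a uniform $L^2$ spatial gradient, then extract the weighted quantity $\int u_{n+1}^\eps(\vartheta_\tau u_i^\eps-u_i^\eps)^2\le C\tau^{1/5}$, and finally convert it to $\int(\vartheta_\tau V_i^\eps-V_i^\eps)^2\le C\tau^{1/5}$ by the product decomposition and Lemma \ref{lem.ineq} with $p=2$. The two test functions differ by $-V_i^\eps\bigl(\vartheta_\tau(u_{n+1}^\eps)^{1/2}-(u_{n+1}^\eps)^{1/2}\bigr)$, which is small, so the arguments are close cousins. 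Your variant has the pedagogical advantage of isolating the auxiliary quantity $u_i^\eps u_{n+1}^\eps$ (the natural variable with uniform gradient control under the nonlocal degeneracy) and of replacing the paper's Young-type absorption step by a purely algebraic bound via $(a+b)^2\le 2a^2+2b^2$; the paper's version is marginally shorter because it produces the target quantity $(\vartheta_\tau V_i^\eps-V_i^\eps)^2$ in one stroke. Both give the same $\tau^{1/10}$-rate.
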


\begin{proof}
The entropy production inequality \eqref{4.epi}
shows that there exists $C>0$ independent of $\eps$ such that for all $i=1,\ldots,n$,
\begin{equation}\label{estim_vf_11}
  \|(u_{n+1}^\eps)^{1/2}\|_{L^2(0,T;H^1(\Omega))}
	+ \|(u_{n+1}^\eps)^{1/2}\na (u_i^\eps)^{1/2}\|_{L^2(\Omega_T)} \le C.
\end{equation}
Because of 
$$
  \na\big((u_{n+1}^\eps)^{1/2}u_i^\eps\big)
	= 2(u_i^\eps u_{n+1}^\eps)^{1/2}\na (u_i^\eps)^{1/2}
	+ u_i^\eps\na(u_{n+1}^\eps)^{1/2},
$$
estimate \eqref{estim_vf_11}, and the  boundedness of $u_i^\eps$  
for $i=1, \ldots, n$, we conclude that
$$
  \|\na((u_{n+1}^\eps)^{1/2}u_i^\eps)\|_{L^2(\Omega_T)} \le C,
	\quad i=1,\ldots,n.
$$
Adding this inequality for $i=1,\ldots,n$ and recalling that
$\sum_{i=1}^n \na u_i^\eps = -\na u_{n+1}^\eps$, it follows that
$$
  \|(u_{n+1}^\eps)^{3/2}\|_{L^2(0,T;H^1(\Omega))} \le C.
$$

It remains to verify the uniform estimates on the equicontinuity of $u^\eps$ 
with respect to the time variable.
For this, we define similarly as in the proof of Lemma~\ref{lem.est}
$$
  \phi(x,t) = \int_{t-\tau}^t\big((\vartheta_\tau u_{n+1}^\eps)^{3/2}
	- (u_{n+1}^\eps)^{3/2}\big)\kappa(\sigma)d\sigma
$$
for some $\tau\in(0,T)$, where $\kappa(\sigma)=1$ for $\sigma\in(0,T-t)$ and
$\kappa(\sigma)=0$ for $\sigma\in[-\tau,0]\cup[T-\tau,T]$. We take $\phi$
as a test function in the sum of equations \eqref{weak_2} for $i=1,\ldots,n$
and use Lemma~\ref{lem.ineq} with $p=3/2$ and the Cauchy-Schwarz inequality
to infer that
\begin{align*}
  &\|\vartheta_\tau u_{n+1}^\eps - u_{n+1}^\eps\|_{L^{5/2}(\Omega_{T-\tau})}^{5/2}
  \le \int_0^{T-\tau}\int_\Omega|\vartheta_\tau u_{n+1}^\eps - u_{n+1}^\eps|^{5/2}
	dxdt \\
	&\le \int_0^{T-\tau}\int_\Omega(\vartheta_\tau u_{n+1}^\eps - u_{n+1}^\eps)
	\big(\vartheta_\tau (u_{n+1}^\eps)^{3/2} - (u_{n+1}^\eps)^{3/2}\big)dxdt \\
	&\le C\bigg|\int_0^{T-\tau}\int_\Omega\bigg(\int_t^{t+\tau}P^\eps(x)\sum_{i=1}^n
	(A(u^\eps) \na u^\eps)_i\,  d\sigma\bigg)\cdot\na\big(\vartheta_\tau (u_{n+1}^\eps)^{3/2}
	- (u_{n+1}^\eps)^{3/2}\big)dxdt\bigg| \\
	&\le C\bigg\{\int_{\Omega_{T-\tau}} \hspace{-0.1 cm}\bigg[\int_t^{t+\tau} \sum_{i=1}^n
	(A(u^\eps)\na u^\eps)_i d\sigma\bigg]^2 dxdt\bigg\}^{\frac 1 2} 
	\bigg\{\int_{\Omega_{T-\tau}} \hspace{-0.3 cm} \big|\na\big(\vartheta_\tau (u_{n+1}^\eps)^{\frac 3 2}
	- (u_{n+1}^\eps)^{\frac 3 2}\big)\big|^2dxdt\bigg\}^{\frac 1 2}.
\end{align*}
The second factor on the right-hand side is uniformly bounded since
$\na(u_{n+1}^\eps)^{3/2}$ is bounded in $L^2(\Omega_T)$.
The first factor can be estimated from above by using
definition \eqref{1.nonloc} of $A(u^\eps)$ and the uniform estimates for 
$(u_{n+1}^\eps)^{1/2}\na u_i^\eps$ as well as $\na(u_{n+1}^\eps)^{1/2}$:
\begin{align*}
  & \int_0^{T-\tau}\int_\Omega\bigg(\int_{t}^{t+\tau} \sum_{i=1}^n
	(A(u^\eps)\na u^\eps)_i d\sigma\bigg)^2 dxdt \\
	&\le C\tau\int_0^{T-\tau}\int_\Omega u_{n+1}^\eps\sum_{i=1}^n
	\Big(\big|\na\big((u_{n+1}^\eps)^{1/2} u_i^\eps\big)\big|^2
	+ |\na(u_{n+1}^\eps)^{1/2}|^2 (u_i^\eps)^2\Big)dxdt \le C\tau.
\end{align*}
We conclude that
$$
  \|\vartheta_\tau u_{n+1}^\eps - u_{n+1}^\eps\|_{L^{5/2}(\Omega_{T-\tau})} 
	\le C\tau^{1/5}.
$$

To prove the remaining estimate in \eqref{apriori_2}, we take the test function
$$
  \phi_i(x,t) = \int_{t-\tau}^t\vartheta_\tau((u_{n+1}^\eps)^{1/2})
	\big(\vartheta_\tau((u_{n+1}^\eps)^{1/2}u_i^\eps)
	- (u_{n+1}^\eps)^{1/2}u_i^\eps\big)\kappa(\sigma)d\sigma
$$
in \eqref{weak_2} for $i=1,\ldots,n$. A computation shows that
\begin{align*}
  & \int_0^{T-\tau}\int_\Omega\sum_{i=1}^n
	\big(\vartheta_\tau((u_{n+1}^\eps)^{1/2}u_i^\eps)
	- (u_{n+1}^\eps)^{1/2}u_i^\eps\big)^2 dxdt \\
	&\phantom{xx}{}+ \int_0^{T-\tau}\int_\Omega\int_t^{t+\tau}\sum_{i=1}^n 
	P^\eps(x) D_i 
	\Big[(u_{n+1}^\eps)^{1/2}\na\big((u_{n+1}^\eps)^{1/2}u_i^\eps\big)
	- 3u_i^\eps(u_{n+1}^\eps)^{1/2}\na(u_{n+1}^\eps)^{1/2}\Big] \\
	&\phantom{xx}{}\times\na\Big[\vartheta_\tau((u_{n+1}^\eps)^{1/2})
	\big(\vartheta_\tau((u_{n+1}^\eps)^{1/2}u_i^\eps)
	- (u_{n+1}^\eps)^{1/2}u_i^\eps\big)\Big]d\sigma dxdt \\
	&= \int_0^{T-\tau}\int_\Omega\sum_{i=1}^n u_i^\eps
	\big(\vartheta_\tau (u_{n+1}^\eps)^{1/2} - (u_{n+1}^\eps)^{1/2}\big)
	\big(\vartheta_\tau((u_{n+1}^\eps)^{1/2}u_i^\eps)
	- (u_{n+1}^\eps)^{1/2}u_i^\eps\big)dxdt \\
	&\le \sum_{i=1}^n\big\|\vartheta_\tau (u_{n+1}^\eps)^{1/2} - (u_{n+1}^\eps)^{1/2}
	\big\|_{L^2(\Omega_{T-\tau})}\big\|\vartheta_\tau((u_{n+1}^\eps)^{1/2}u_i^\eps)
	- (u_{n+1}^\eps)^{1/2}u_i^\eps\big\|_{L^2(\Omega_{T-\tau})}.
\end{align*}
The second integral on the left-hand side is bounded by $C\tau^{1/2}$ in view of
the gradient estimates in \eqref{apriori_2}.
We infer from Lemma~\ref{lem.ineq} with $p=2$, $a=\vartheta_\tau (u_{n+1}^\eps)^{1/2}$,
$b=(u_{n+1}^\eps)^{1/2}$ and the third estimate in \eqref{apriori_2} that
$$
  \|\vartheta_\tau (u_{n+1}^\eps)^{1/2} - (u_{n+1}^\eps)^{1/2}
	\|_{L^2(\Omega_{T-\tau})} 
	\le C\|\vartheta_\tau u_{n+1}^\eps - u_{n+1}^\eps\|_{L^2(\Omega_{T-\tau})}^{1/2}
	\le C\tau^{1/10},
$$
finishing the proof.
\end{proof}

\begin{remark}\rm
Similar uniform estimates as in Lemma \ref{lem.est2} hold for the solutions
of problem \eqref{2.eq}-\eqref{2.bic} with the
diffusion matrix \eqref{1.nonloc} defined in a perforated domain with
the only difference that the domain $\Omega$ has to be replaced by $\Omega^\eps$:
\begin{align}
  & \big\|(u_{n+1}^\eps)^{1/2} u_i^\eps\big\|_{L^2(0,T;H^1(\Omega^\eps))} 
	\le C, \nonumber \\
	& \|(u_{n+1}^\eps)^{1/2}\|_{L^2(0,T;H^1(\Omega^\eps))}
	+ \|(u_{n+1}^\eps)^{3/2}\|_{L^2(0,T;H^1(\Omega^\eps))} \le C, \nonumber \\
	& \big\|\vartheta_\tau u_{n+1}^\eps - u_{n+1}^\eps\big\|_{L^{5/2}(\Omega_T^\eps)}
	\le C\tau^{1/5}, \label{apriori_22} \\
	& \big\|\vartheta_\tau\big((u_{n+1}^\eps)^{1/2}u_i^\eps\big)
	- (u_{n+1}^\eps)^{1/2}u_i^\eps\big\|_{L^2(\Omega^\eps_{T-\tau})}
	\le C\tau^{1/10}, \nonumber
\end{align}
for $i=1,\ldots,n$ and $\Omega_T^\eps=\Omega^\eps\times(0,T)$. 
\qed
\end{remark}

The uniform estimates in Lemma~\ref{lem.est2} yield the following convergence
results.

\begin{lemma}[Convergence]\label{lem.conv2}
Let $u^\eps$ be a solution of \eqref{1.eq}-\eqref{1.bic} 
with diffusion matrix \eqref{1.nonloc} satisfying
estimates \eqref{apriori_2}. Then there exist functions $u_1,\ldots,u_n 
\in L^\infty(0,T; L^\infty(\Omega))$, 
with  $u_{n+1}^{1/2}u_i$, $u_{n+1}^{1/2}\in L^2(0,T;$ $H^1(\Omega))$, and functions
$V_1,\ldots,V_{n+1}\in L^2(\Omega_T;H^1_{\rm per}(Y)/\R)$ such that, up
to subsequences,
\begin{align}
  u^\eps_{n+1} &\to u_{n+1} && \text{strongly in }L^p(\Omega_T), \;  
	p \in (1, \infty), \nonumber \\ 
  u^\eps_i &\rightharpoonup u_i && \text{weakly in }L^p(\Omega_T),\ p\in(1,\infty), 
	\nonumber \\ 
  (u_{n+1}^\eps)^{1/2} &\rightharpoonup(u_{n+1})^{1/2} &&\text{weakly  in } 
	L^2(0,T; H^1(\Omega)), \nonumber \\
  (u_{n+1}^\eps)^{1/2} u^\eps_i &\to (u_{n+1})^{1/2} u_i && 
	\text{strongly in } L^2(\Omega_T), \label{4.conv2} \\
  (u_{n+1}^\eps)^{1/2} u^\eps_i &\rightharpoonup (u_{n+1})^{1/2} u_i && 
	\text{weakly  in } L^2(0,T; H^1(\Omega)), \nonumber \\
  \na ((u_{n+1}^\eps)^{1/2} u^\eps_i) &\rightharpoonup\nabla((u_{n+1})^{1/2}u_i)  
	+ \na_y V_i  && \text{two-scale}, \nonumber \\
  \na(u_{n+1}^\eps)^{1/2} &\rightharpoonup \na (u_{n+1})^{1/2} + \na_y V_{n+1}  
	&& \text{two-scale}, \nonumber 
\end{align}
as $\eps \to 0$, where $i=1,\ldots, n$ and $u_{n+1} = 1 - \sum_{i=1}^n u_i$. 
\end{lemma}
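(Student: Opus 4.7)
The plan is to combine the uniform bounds from Lemma~\ref{lem.est2} with two applications of the Aubin-Lions/Simon lemma, following the same pattern as in Lemma~\ref{conver_11} but adapted to the nonlocal degeneracy. The core obstacle is that no $L^2(0,T;H^1(\Omega))$-bound is available for $u_i^\eps$ itself (only for powers and products involving $u_{n+1}^\eps$); consequently $u_i^\eps$ can be captured only weakly, and every product limit must be obtained by pairing it with a strongly convergent factor coming from $u_{n+1}^\eps$.

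The weak-$*$ limits $u_i^\eps \rightharpoonup^* u_i$ in $L^\infty(\Omega_T)$ follow at once from the pointwise bounds $0\le u_i^\eps\le 1$, which also yield weak convergence in $L^p(\Omega_T)$ for every $p\in(1,\infty)$. To handle $u_{n+1}^\eps$ I would work with its square root. From Lemma~\ref{lem.est2}, $(u_{n+1}^\eps)^{1/2}$ is bounded in $L^2(0,T;H^1(\Omega))$; applying Lemma~\ref{lem.ineq} with $p=2$ to $a=\vartheta_\tau u_{n+1}^\eps$, $b=u_{n+1}^\eps$, together with H\"older's inequality and the $L^{5/2}$-equicontinuity in \eqref{apriori_2}, produces
$$
\|\vartheta_\tau (u_{n+1}^\eps)^{1/2} - (u_{n+1}^\eps)^{1/2}\|_{L^2(\Omega_{T-\tau})}^2 \le \|\vartheta_\tau u_{n+1}^\eps - u_{n+1}^\eps\|_{L^1(\Omega_{T-\tau})} \le C\tau^{1/5}.
$$
The Aubin-Lions lemma of \cite{Sim87} (using $H^1(\Omega)\hookrightarrow\hookrightarrow L^2(\Omega)$) then delivers a strongly convergent subsequence $(u_{n+1}^\eps)^{1/2}\to w$ in $L^2(\Omega_T)$, upgraded by the uniform $L^\infty$-bound to $L^p(\Omega_T)$ for every $p<\infty$; squaring gives strong convergence of $u_{n+1}^\eps$ in every $L^p(\Omega_T)$. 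The identity $u_{n+1}^\eps = 1-\sum_{i=1}^n u_i^\eps$ survives in the limit, so $w^2 = 1-\sum_{i=1}^n u_i =: u_{n+1}$ and $w=u_{n+1}^{1/2}$ is the nonnegative square root.

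For the products $(u_{n+1}^\eps)^{1/2} u_i^\eps$, the combination of strong convergence of the first factor with weak convergence of the second identifies their weak $L^2$-limit as $u_{n+1}^{1/2} u_i$. I would upgrade this to strong $L^2$-convergence by invoking Simon's lemma a second time, using the $L^2(0,T;H^1(\Omega))$-bound together with the equicontinuity estimate $\|\vartheta_\tau((u_{n+1}^\eps)^{1/2} u_i^\eps) - (u_{n+1}^\eps)^{1/2} u_i^\eps\|_{L^2(\Omega_{T-\tau})} \le C\tau^{1/10}$ from Lemma~\ref{lem.est2}; uniqueness of weak limits identifies the strong limit. The weak $H^1$-convergences of $(u_{n+1}^\eps)^{1/2}$ and $(u_{n+1}^\eps)^{1/2} u_i^\eps$ are immediate from the uniform bounds and place $u_{n+1}^{1/2},\, u_{n+1}^{1/2} u_i \in L^2(0,T;H^1(\Omega))$. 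The two-scale convergences follow from the standard two-scale compactness result (Lemma~\ref{lem1} in Appendix~\ref{app}) applied to the $L^2$-bounded gradient sequences, with correctors $V_i, V_{n+1}\in L^2(\Omega_T; H^1_{\rm per}(Y)/\R)$; comparison with the weak $H^1$-limits fixes the macroscopic parts as $\na(u_{n+1}^{1/2} u_i)$ and $\na u_{n+1}^{1/2}$ respectively.

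The step I expect to be the main obstacle is the strong convergence of $(u_{n+1}^\eps)^{1/2}$: the identification of every product limit ultimately rests on it, and it in turn hinges critically on the nonstandard $L^{5/2}$-equicontinuity estimate for $u_{n+1}^\eps$ established in Lemma~\ref{lem.est2}, without which the purely weak convergence of $u_i^\eps$ would not be enough to pass to the limit in the nonlinear factor $(u_{n+1}^\eps)^{1/2}$.
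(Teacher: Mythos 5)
Your proposal is correct and follows essentially the same route as the paper: uniform $L^\infty$ bounds give weak $L^p$ convergence of $u_i^\eps$, Lemma~\ref{lem.ineq} with $p=2$ converts the $L^{5/2}$-equicontinuity of $u_{n+1}^\eps$ into $L^2$-equicontinuity of $(u_{n+1}^\eps)^{1/2}$ so Aubin--Lions/Simon applies, a second application of Simon's lemma (using the $H^1$-bound and the $\tau^{1/10}$-equicontinuity from \eqref{apriori_2}) upgrades the weak product limit $(u_{n+1}^\eps)^{1/2}u_i^\eps\rightharpoonup u_{n+1}^{1/2}u_i$ to strong $L^2$, and the two-scale compactness lemma produces the correctors. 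The only cosmetic difference is that you pass through the $L^1$-norm of $\vartheta_\tau u_{n+1}^\eps-u_{n+1}^\eps$ instead of the $L^2$-norm when deriving the square-root equicontinuity estimate; both yield the same exponent $\tau^{1/10}$ and the paper's argument is otherwise identical.
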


\begin{proof}
The estimates for $u_{n+1}^\eps$ in \eqref{apriori_2} and Lemma~\ref{lem.ineq}
with $p=2$ show that
$$
  \|\vartheta_\tau(u_{n+1}^\eps)^{1/2} - (u_{n+1}^\eps)^{1/2}\|_{L^2(\Omega_{T-\tau})}
	\le \|\vartheta_\tau u_{n+1}^\eps - u_{n+1}^\eps\|_{L^2(\Omega_{T-\tau})}^{1/2}
	\le C\tau^{1/10}.
$$
Thus, together with the uniform bound for $u_{n+1}^\eps$ in $L^2(0,T;H^1(\Omega))$,
the Aubin-Lions lemma \cite{Sim87} implies the existence of a function $w\in L^2(\Omega_T)$ and 
a subsequence (not relabeled) such that $(u_{n+1}^\eps)^{1/2}\to w$ strongly in 
$L^2(\Omega_T)$ as $\eps\to 0$. In particular,
possibly for another subsequence, $(u_{n+1}^\eps)^{1/2}\to w$ a.e.\ in 
$\Omega_T$. Then, defining $u_{n+1}:=w^2\ge 0$, it follows that
$u_{n+1}^\eps\to u_{n+1}$ a.e.\ in $\Omega_T$ and, because of the boundedness
of $u_{n+1}^\eps$, also $u_{n+1}^\eps\to u_{n+1}$ in $L^p(\Omega_T)$ for any $p<\infty$.

The weak convergence of $(u_i^\eps)$ to $u_i$ in $L^p(\Omega_T)$ 
for $p<\infty$ is a consequence
of the uniform $L^\infty$-bound of $u_i^\eps$. As a consequence,
$(u_{n+1}^\eps)^{1/2}u_i^\eps\rightharpoonup u_{n+1}^{1/2}u_i$ weakly in
$L^2(\Omega_T)$. By the first estimate
in \eqref{apriori_2}, a subsequence of $((u_{n+1}^\eps)^{1/2}u_i^\eps)$
is weakly converging in $L^2(0,T;H^1(\Omega))$, and we can identify the
limit by $u_{n+1}^{1/2}u_i$. In fact, this limit is strong because
the first and last estimate in \eqref{apriori_2} allow us to apply the Aubin-Lions 
lemma again to conclude that, for a subsequence,
$(u_{n+1}^\eps)^{1/2}u_i^\eps\to u_{n+1}^{1/2}u_i$ strongly in $L^2(\Omega_T)$.

Using the first three estimates in \eqref{apriori_2} and the compactness theorem
for two-scale convergence (see Lemma \ref{lem1} in Appendix \ref{app}),
we obtain the two-scale convergences in \eqref{4.conv2}.
\end{proof}

The uniform estimates \eqref{apriori_22} lead to the following convergences
for the extensions $\overline{u_i^\eps}$, $\overline{(u_{n+1}^\eps)^{1/2}}$,
and $\overline{(u_{n+1}^\eps)^{1/2}u_i^\eps}$ from $\Omega^\eps$ to $\Omega$
of  $u_i^\eps$, $(u_{n+1}^\eps)^{1/2}$, and $(u_{n+1}^\eps)^{1/2}u_i^\eps$, 
respectively, where $i=1,\ldots,n$ and  $u^\eps$  is a weak solution of problem  
\eqref{2.eq} and \eqref{2.bic} with the diffusion matrix~\eqref{1.nonloc}. 

For any $\psi\in L^p(\Omega_T^\eps)$,
we denote by $[\psi]^\sim$ the extension of $\psi$ by zero from $\Omega^\eps_T$
to $\Omega_T$.

\begin{lemma}[Convergence]\label{lem.conv_2}
Let $u^\eps$ be a solution of \eqref{2.eq} and \eqref{2.bic} with 
the diffusion matrix~\eqref{1.nonloc}, satisfying
estimates \eqref{apriori_22}. Then there exist  $u_1,\ldots,u_n 
\in L^\infty(0,T; L^\infty(\Omega))$
with $u_{n+1}^{1/2}u_i$, $u_{n+1}^{1/2} \in L^2(0,T;H^1(\Omega))$ and functions
$V_1,\ldots,V_{n+1}\in L^2(\Omega_T;H^1_{\rm per}(Y_1)/\R)$ such that, up
to subsequences,
\begin{align}
  \overline{u^\eps_{n+1}} &\to  u_{n+1} && \text{strongly  in } L^2(\Omega_T), 
	\nonumber \\ 
  \overline{(u_{n+1}^\eps)^{1/2}} &\rightharpoonup  u_{n+1}^{1/2}   
	&& \text{weakly in } L^2(0,T; H^1(\Omega)), \nonumber  \\
  [u^\eps_i]^{\sim}&\rightharpoonup\theta u_i &&\text{weakly in }L^p(\Omega_T),\ 
	p \in (1, \infty), \nonumber \\ 
  \overline{(u_{n+1}^\eps)^{1/2} u^\eps_i}&\to u_{n+1}^{1/2} u_i
	&& \text{strongly in } L^2(\Omega_T), \nonumber \\
  \overline{(u_{n+1}^\eps)^{1/2} u^\eps_i}&\rightharpoonup 
	u_{n+1}^{1/2} u_i && \text{weakly in } L^2(0,T; H^1(\Omega)), \label{conv_2} \\
  [(u_{n+1}^\eps)^{1/2} u^\eps_i]^{\sim} 
	&\rightharpoonup \chi_{Y_1}u_{n+1}^{1/2} u_i && \text{two-scale}, \nonumber \\
  [\na ((u_{n+1}^\eps)^{1/2} u^\eps_i)]^{\sim} &\rightharpoonup \chi_{Y_1}
	(\na(u_{n+1}^{1/2} u_i)  + \na_y V_i) && \text{two-scale}, \nonumber \\
  [\na(u_{n+1}^\eps)^{1/2}]^{\sim} &\rightharpoonup \chi_{Y_1}(\na u_{n+1}^{1/2}   
	+ \na_y V_{n+1})   && \text{two-scale},  \nonumber 
\end{align}
for $i=1,\ldots,n$, $u_{n+1}=1-\sum_{i=1}^n u_i$, $\theta=|Y_1|/|Y|$, and
$\chi_{Y_1}$ is the characteristic function of $Y_1$.
\end{lemma}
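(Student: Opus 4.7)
The plan is to mirror the proof of Lemma \ref{lem.conv2} while using the extension operator from Remark \ref{extension} to transfer estimates from the perforated domain $\Omega^\eps$ to the full domain $\Omega$. First I would apply this extension to $(u_{n+1}^\eps)^{1/2}$, $(u_{n+1}^\eps)^{3/2}$ and $(u_{n+1}^\eps)^{1/2}u_i^\eps$, which are uniformly bounded in $L^2(0,T;H^1(\Omega^\eps))$ by \eqref{apriori_22}. Linearity of the extension combined with the time-shift estimates in \eqref{apriori_22} also yields, on the full domain and with constants independent of $\eps$, the bound $\|\vartheta_\tau\overline{u_{n+1}^\eps}-\overline{u_{n+1}^\eps}\|_{L^{5/2}(\Omega_{T-\tau})}\le C\tau^{1/5}$ and a corresponding $L^2(\Omega_{T-\tau})$-bound with rate $\tau^{1/10}$ for $\overline{(u_{n+1}^\eps)^{1/2}u_i^\eps}$.

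Given these ingredients, the Aubin--Lions--Simon lemma \cite{Sim87} produces, along a subsequence, the strong $L^2(\Omega_T)$-convergences $\overline{(u_{n+1}^\eps)^{1/2}}\to w$ and $\overline{(u_{n+1}^\eps)^{1/2}u_i^\eps}\to G_i$, together with a.e.\ pointwise convergence of a further subsequence of $\overline{(u_{n+1}^\eps)^{1/2}}$. Setting $u_{n+1}:=w^2\in[0,1]$ and using $0\le u_{n+1}^\eps\le 1$ with dominated convergence yields the strong $L^p$-convergence $\overline{u_{n+1}^\eps}\to u_{n+1}$ for every $p<\infty$, and the weak $L^2(0,T;H^1(\Omega))$-convergences are immediate from the uniform $H^1$-bounds on the extensions. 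To produce the $u_i$ of the statement, I would use that $u_i^\eps$ is uniformly bounded in $L^\infty(\Omega^\eps_T)$, so that $[u_i^\eps]^{\sim}$ is bounded in every $L^p(\Omega_T)$ and has a weakly convergent subsequence with limit $v_i$. Passing to the weak limit in the identity $[u_{n+1}^\eps]^{\sim}+\sum_{i=1}^n[u_i^\eps]^{\sim}=\chi_{\Omega^\eps}$, and using the standard convergences $\chi_{\Omega^\eps}\rightharpoonup\theta$ together with $[u_{n+1}^\eps]^{\sim}\rightharpoonup\theta u_{n+1}$ (the latter from strong convergence of $\overline{u_{n+1}^\eps}$ multiplied by weak convergence of $\chi_{\Omega^\eps}$), forces $v_i=\theta u_i$ with $u_{n+1}=1-\sum_{i=1}^n u_i$, as required.

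The main obstacle is the identification $G_i=u_{n+1}^{1/2}u_i$, since $u_i^\eps$ carries no uniform $H^1$-bound in the nonlocal case. I would exploit the two equivalent factorisations valid on all of $\Omega_T$:
\[
  \bigl[(u_{n+1}^\eps)^{1/2}u_i^\eps\bigr]^{\sim}
  \;=\; \overline{(u_{n+1}^\eps)^{1/2}}\,[u_i^\eps]^{\sim}
  \;=\; \overline{(u_{n+1}^\eps)^{1/2}u_i^\eps}\,\chi_{\Omega^\eps}.
\]
The first factorisation, together with strong $L^2$-convergence of $\overline{(u_{n+1}^\eps)^{1/2}}$ and weak $L^2$-convergence of $[u_i^\eps]^{\sim}$, identifies the weak limit as $\theta u_{n+1}^{1/2}u_i$; the second, together with strong convergence of $\overline{(u_{n+1}^\eps)^{1/2}u_i^\eps}$ and $\chi_{\Omega^\eps}\rightharpoonup\theta$, identifies the same weak limit as $\theta G_i$. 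Comparing the two expressions gives $G_i=u_{n+1}^{1/2}u_i$. Finally, the two-scale convergences involving the characteristic function $\chi_{Y_1}$ follow from the uniform $L^2(\Omega^\eps_T)$-bounds on the gradients and the standard two-scale compactness theorem for zero-extensions from a perforated domain (Lemma \ref{lem1} in Appendix \ref{app}); the specific forms $\chi_{Y_1}(\nabla(u_{n+1}^{1/2}u_i)+\nabla_y V_i)$ and $\chi_{Y_1}(\nabla u_{n+1}^{1/2}+\nabla_y V_{n+1})$ are then fixed by the weak $H^1(\Omega)$-convergences of the extensions established in the previous step.
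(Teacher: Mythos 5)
Your identification of the limit of $\overline{(u_{n+1}^\eps)^{1/2}u_i^\eps}$ via the two factorisations
$[(u_{n+1}^\eps)^{1/2}u_i^\eps]^{\sim}=\overline{(u_{n+1}^\eps)^{1/2}}\,[u_i^\eps]^{\sim}=\overline{(u_{n+1}^\eps)^{1/2}u_i^\eps}\,\chi_{\Omega^\eps}$
is correct and is a cleaner route than the paper's: the paper instead passes through the unfolding operator $\T^\eps_{Y_1}$ (Proposition~\ref{prop.unfold}, Theorem~\ref{conv_unfold}) to identify the two-scale limit $v_i$ with $u_{n+1}^{1/2}u_i$ and simultaneously to argue that the two-scale limit of $[u_i^\eps]^\sim$ is $y$-independent. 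Your product argument uses only strong $L^2$ times weak-$*$ $L^\infty$ convergence and avoids unfolding, which is a genuine simplification and is sound (one should note that $[u_i^\eps]^\sim$ is uniformly bounded in $L^\infty$, so the product makes sense in the weak topology even though the extension $\overline{(u_{n+1}^\eps)^{1/2}}$ need not be pointwise bounded).

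There is, however, a genuine gap in the step establishing $\overline{u_{n+1}^\eps}\to u_{n+1}$ strongly in $L^2(\Omega_T)$. You set $u_{n+1}:=w^2$ where $\overline{(u_{n+1}^\eps)^{1/2}}\to w$ a.e., and then invoke dominated convergence to conclude $\overline{u_{n+1}^\eps}\to w^2$. But the extension operator does not commute with the nonlinearity: on $\Omega\setminus\Omega^\eps$ one has $\overline{u_{n+1}^\eps}\ne\big(\overline{(u_{n+1}^\eps)^{1/2}}\big)^2$ in general, so the a.e.\ convergence of $\overline{(u_{n+1}^\eps)^{1/2}}$ gives you a.e.\ convergence of its square, not of $\overline{u_{n+1}^\eps}$. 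Moreover the extensions from the Appendix lemma are built via mean value plus a Sobolev extension of the fluctuation and therefore carry no uniform $L^\infty$ bound, so even with the a.e.\ convergence dominated convergence is not available without an extra argument. The paper fills exactly this gap by first showing that $(\overline{u_{n+1}^\eps})$ is Cauchy in $L^2(\Omega_T)$ using the extension bounds in both directions together with Lemma~\ref{lem.ineq} (here in the form $|a-b|\le 2|\sqrt{a}-\sqrt{b}|$ on $[0,1]$), and then identifying the limit against test functions via the sequence of equalities
$\theta\int u_{n+1}^{1/2}\phi
= \lim\int(\overline{u_{n+1}^\eps})^{1/2}\chi_{\Omega^\eps}\phi
= \lim\int(u_{n+1}^\eps)^{1/2}\chi_{\Omega^\eps}\phi
= \lim\int\overline{(u_{n+1}^\eps)^{1/2}}\chi_{\Omega^\eps}\phi
= \theta\int w\phi.$
You need an argument of this type (or equivalent) rather than appealing to dominated convergence. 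A minor point: the reference for two-scale compactness of zero-extensions from a perforated domain should be Lemma~\ref{lem12}, not Lemma~\ref{lem1}.
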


\begin{proof}
As in the proof of Lemma \ref{lem.est2}, we obtain the uniform estimate
$$
  \|\vartheta_\tau (u_{n+1}^\eps)^{1/2} - (u_{n+1}^\eps)^{1/2}
	\|_{L^2(\Omega^\eps_{T-\tau})} \le C\tau^{1/10}.
$$
Then, together with the uniform bound on $(u_{n+1}^\eps)^{1/2}$ in
$L^2(0,T;H^1(\Omega^\eps))$,  the properties of
the extension of $(u_{n+1}^\eps)^{1/2}$ from $\Omega^\eps$ to $\Omega$, and the Aubin-Lions lemma \cite{Sim87},  we conclude the strong convergence (up to a subsequence)
$$
  \overline{(u_{n+1}^\eps)^{1/2}} \to \overline{w}\quad\text{strongly in }
	L^2(\Omega_T)
$$
as $\eps\to 0$. To identify the limit, we use the properties of the extension,
the boundedness of $u_{n+1}^\eps$, and the elementary inequality
$|a-b|\le 2|\sqrt{a}-\sqrt{b}|$ for $0\leq a,b \leq1$ 
(also see Lemma~\ref{lem.ineq}) to find that
\begin{align*}
  \big\|\overline{u_{n+1}^{\eps_m}}-\overline{u_{n+1}^{\eps_k}}\big\|_{L^2(\Omega_T)}
	&\le C\|u_{n+1}^{\eps_m}-u_{n+1}^{\eps_k}\|_{L^2(\Omega_T^\eps)}
	\le C\big\|(u_{n+1}^{\eps_m})^{1/2}-(u_{n+1}^{\eps_k})^{1/2}
	\big\|_{L^2(\Omega_T^\eps)} \\
	&\le C\big\|\overline{(u_{n+1}^{\eps_m})^{1/2}}
	- \overline{(u_{n+1}^{\eps_k})^{1/2}}\big\|_{L^2(\Omega_T)},
\end{align*}
for a sequence $(\eps_n)_{n \in \mathbb N}$. Thus, the strong convergence of
$\overline{(u_{n+1}^\eps)^{1/2}}$ in $L^2(\Omega_T)$ implies the strong convergence
$\overline{u_{n+1}^\eps}\to u_{n+1}$ in $L^2(\Omega_T)$. Then the weak convergence
\begin{align*}
  \theta\int_{\Omega_T} u_{n+1}^{1/2}\phi dxdt
	&= \lim_{\eps\to 0}\int_{\Omega_T}(\overline{u_{n+1}^\eps})^{1/2}\chi_{\Omega^\eps}
	\phi dxdt 
	= \lim_{\eps\to 0}\int_{\Omega_T}(u_{n+1}^\eps)^{1/2}\chi_{\Omega^\eps}\phi dxdt \\
	&= \lim_{\eps\to 0}\int_{\Omega_T}\overline{(u_{n+1}^\eps)^{1/2}}\chi_{\Omega^\eps}
	\phi dxdt = \theta\int_{\Omega_T} \overline{w}\phi dxdt
\end{align*}
for any $\phi\in C_0(\Omega_T)$ shows that $\overline{w}=u_{n+1}^{1/2}$ a.e.\
in $\Omega_T$. We have proved the first two convergences in \eqref{conv_2}.

The uniform estimate for $(\na(u_{n+1}^\eps)^{1/2})$ and the
compactness results for the two-scale convergence, see, e.g., \cite{All92} or  
Lemma~\ref{lem12} in Appendix \ref{app}, imply the last convergence in \eqref{conv_2}.
Moreover, by the first and last estimate in \eqref{apriori_22} for
$(u_{n+1}^\eps)^{1/2}u_i^\eps$, the properties of its extension from
$\Omega^\eps$ to $\Omega$, and the Aubin-Lions lemma, it follows that,
up to a subsequence,
$\overline{(u_{n+1}^\eps)^{1/2}u_i^\eps}\to v_i$ strongly in
$L^2(\Omega_T)$ and weakly in $L^2(0,T;H^1(\Omega))$. We need to identify
this limit.
To this end, we first observe that, thanks to the boundedness 
of $u_i^\eps$ in $\Omega_T^\eps$, it follows that
$$
  |u_i^\eps]^\sim \rightharpoonup \chi_{Y_1} u_i\quad\mbox{two-scale}
$$
for some function $u_i\in L^p(\Omega_T\times Y)$, 
where $p \in (1, \infty)$ and $i=1, \ldots, n$.  The a priori estimates
and the compactness properties for sequences defined in perforated domains, 
see \cite{All92} or Lemma~\ref{lem12} in Appendix \ref{app},  
yield  the existence of functions $V_1,\ldots,V_n\in
L^2(\Omega_T;H^1_{\rm per}(Y_1)/\R)$ such that, up to subsequences,
\begin{align*}
  [(u_{n+1}^\eps)^{1/2}u_i^\eps]^\sim &\rightharpoonup \chi_{Y_1} v_i
	&&\text{two-scale}, \\	
	[\na((u_{n+1}^\eps)^{1/2}u_i^\eps)]^\sim &\rightharpoonup
	\chi_{Y_1}(\na v_i + \na_y V_i) &&\text{two-scale}. 
\end{align*}
The strong convergence of $\overline{(u_{n+1}^\eps)^{1/2}u_i^\eps}$ and 
the identity 
$$
  \int_{\Omega_T} [(u_{n+1}^\eps)^{1/2}u_i^\eps]^\sim \phi dx dt 
  = \int_{\Omega_T} (u_{n+1}^\eps)^{1/2}u_i^\eps\chi_{\Omega^\eps} \phi dx dt 
  = \int_{\Omega_T} \overline{(u_{n+1}^\eps)^{1/2}u_i^\eps}\chi_{\Omega^\eps} \phi 
	dx dt 
$$
for any $\phi \in C_0(\Omega_T)$ imply that
$$	
  [(u_{n+1}^\eps)^{1/2}u_i^\eps]^\sim \rightharpoonup \theta v_i
	\quad \text{weakly in } L^2(\Omega_T).
$$
By Proposition \ref{prop.unfold} and Theorem \ref{conv_unfold} in Appendix
\ref{app}, this gives
\begin{align}
  \T_{Y_1}^\eps\big((u_{n+1}^\eps)^{1/2}u_i^\eps\big) &\rightharpoonup v_i
	&&\text{weakly in }L^2(\Omega_T\times Y_1), \label{T1} \\
  \T_{Y_1}^\eps\big(\na((u_{n+1}^\eps)^{1/2}u_i^\eps)\big) &\rightharpoonup
	\na v_i + \na_y V_i &&\text{weakly in }L^2(\Omega_T\times Y_1). \nonumber
\end{align}
The strong convergence of $\overline{(u_{n+1}^\eps)^{1/2}}$,  the two-scale
convergence of $[u_i^\eps]^\sim$, and the fact that 
$\overline{(u_{n+1}^\eps)^{1/2}}\chi_{\Omega^\eps} 
= (u_{n+1}^\eps)^{1/2} \chi_{\Omega^\eps}$, imply 
$$
  \T_{Y_1}^\eps\big((u_{n+1}^\eps)^{1/2}u_i^\eps\big)
	= \T_{Y_1}^\eps\big((u_{n+1}^\eps)^{1/2}\big)\T_{Y_1}^\eps(u_i^\eps)
	\rightharpoonup u_{n+1}^{1/2}u_i\quad\mbox{weakly in }L^2(\Omega_T\times Y_1).
$$
By the convergence \eqref{T1} and the fact that $u_{n+1}$ and $v_i$ are  
independent of $y$, we infer that $u_i(x,y,t)=u_i(x,t)$ and $v_i=u_{n+1}^{1/2}u_i$, 
proving the claim.
\end{proof}

\begin{proof}[Proof of Theorem \ref{main_filling}]
Let $\phi^0\in C_0^1([0,T];C^1(\overline{\Omega};\R^n))$ and
$\phi^1\in C_0^1(\Omega_T;C^1_{\rm per}(Y;\R^n))$ and set
$\phi(x,t) = \phi^0(x,t) + \eps\phi^1(x,x/\eps,t)$. 
We take this function as a test function in \eqref{weak_2} and pass to the
limit $\eps\to 0$, using the two-scale convergence of 
$\na((u_{n+1}^\eps)^{1/2}u_i^\eps)$ and $\na (u_{n+1}^\eps)^{1/2}$
(the last two convergences in \eqref{4.conv2}):
\begin{align}
  0 &= -\int_0^T\int_\Omega u\cdot\pa_t \phi^0 dxdt
	+ \int_0^T\int_\Omega\dashint_{Y}\sum_{i=1}^n P(y)D_i \, u_{n+1}^{1/2} \nonumber \\
	&\phantom{xx}{}\times\Big(\na(u_{n+1}^{1/2}u_i) + \na_y V_i 
	- 3u_i\big(\na u_{n+1}^{1/2}
	+ \na_y V_{n+1}\big)\Big)\cdot(\na \phi_i^0 + \na_y\phi_i^1)\, dydxdt. \label{wf1}
\end{align}
Choosing $\phi^0=0$ and setting $W_i = V_i - 3u_iV_{n+1}$, this gives
$$
  0 = \sum_{i=1}^n\int_0^T\int_\Omega\dashint_{Y} P(y)D_i u_{n+1}^{1/2} \\
	\big(\na(u_{n+1}^{1/2}u_i) - 3u_i\na u_{n+1}^{1/2}
	+ \na_y W_i\big)\cdot\na_y\phi_i^1 dydxdt.
$$
This is a linear equation
for $W_1,\ldots,W_n$ and a weak formulation of a system  of uncoupled elliptic 
equations for $W= (W_1,\ldots,W_n)$.  Since for  $x\in \Omega$ and $t>0$ such that 
$u_{n+1}(t,x) > 0$, we have a unique (up to a constant) solution of the system for  
$W$, each $W_i$ is defined by 
\begin{equation}\label{eq.Wi}
  0 = \int_0^T\int_\Omega\dashint_{Y} P(y)D_i u_{n+1}^{1/2} \\
	\big(\na(u_{n+1}^{1/2}u_i) - 3u_i\na u_{n+1}^{1/2}
	+ \na_y W_i\big)\cdot\na_y\phi_i^1 dydxdt.
\end{equation}
This motivates the following ansatz:
\begin{equation}\label{Wi}
  W_i(x,y,t) = \sum_{\ell=1}^d\bigg(\frac{\pa}{\pa x_\ell}(u_{n+1}^{1/2}u_i) 
	- 3u_i\frac{\pa}{\pa x_\ell}u_{n+1}^{1/2}\bigg)w_i^\ell(x,y,t)
\end{equation}
for some functions $w_i^\ell$ for $\ell=1,\ldots,d$ and $i=1, \ldots, n$.
Substituting the ansatz \eqref{Wi} into \eqref{eq.Wi}, we find that $w_i^\ell$ solves
\begin{align*}
  0 &= D_i\int_0^T\int_\Omega\dashint_Y u_{n+1}^{1/2}\sum_{k=1}^d P_k(y)
	\bigg\{\frac{\pa}{\pa x_k}(u_{n+1}^{1/2}u_i) - 3u_i\frac{\pa}{\pa x_k}u_{n+1}^{1/2} \\
	&\phantom{xx}{}+ \sum_{\ell=1}^d \bigg(\frac{\pa}{\pa x_\ell}(u_{n+1}^{1/2}u_i) 
	- 3u_i\frac{\pa}{\pa x_\ell}u_{n+1}^{1/2}\bigg)\frac{\pa w_i^\ell}{\pa y_k}
	\bigg\}\frac{\pa\phi_i^1}{\pa y_k} dydxdt \\
	&= D_i\sum_{\ell=1}^d \int_{\Omega_T}\dashint_Y u_{n+1}^{1/2}
	\bigg(\frac{\pa}{\pa x_\ell}(u_{n+1}^{1/2}u_i) 
	- 3u_i\frac{\pa}{\pa x_\ell}u_{n+1}^{1/2}\bigg) \sum_{k=1}^d P_k(y)
	\bigg(\frac{\pa w_i^\ell}{\pa y_k} + \delta_{k\ell}\bigg)
	\frac{\pa\phi_i^1}{\pa y_k} dydxdt.
\end{align*}
Since the functions $u_i$ are independent of $y$, we see that $w_i^\ell$ 
is in fact a solution of the unit-cell problem
$$
  \diver_y\big(P(y)( \na_y w^\ell_i + e_\ell)\big) = 0\quad\mbox{in }Y, \quad
	\int_Y w_i^\ell(y,t)dy = 0, \quad w_i^\ell\text{ is $Y$-periodic},
$$
where $i=1,\ldots,n$, $\ell=1,\ldots,d$, and 
recalling that $(e_1,\ldots,e_d)$ is the canonical basis of $\R^d$.
These problems do not depend on $i$, so we may set $w^\ell:=w_i^\ell$ for
$i=1,\ldots,n$.

Next, we choose $\phi^1_i=0$ for $i=1,\ldots,n$ in \eqref{wf1}:
\begin{align*}
  0 &= -\int_0^T\int_\Omega u\cdot\pa_t \phi^0 dxdt \\
	&\phantom{xx}{}
	+ \int_0^T\int_\Omega\dashint_{Y}\sum_{i=1}^n P(y)D_i u_{n+1}^{1/2} 
	\big(\na(u_{n+1}^{1/2}u_i) 
	- 3u_i\na u_{n+1}^{1/2} + \na_y W_i\big)\cdot\na \phi_i^0dydxdt.
\end{align*}
Inserting the ansatz \eqref{Wi} and rearranging the terms leads to
\begin{align}\label{macro_eq_nonl}
  0 &= -\int_0^T\int_\Omega u\cdot\pa_t \phi^0 dxdt
	+ \sum_{i=1}^n D_i\int_0^T\int_\Omega\dashint_{Y}  P(y) u_{n+1}^{1/2}
	\bigg\{\na(u_{n+1}^{1/2}u_i) - 3u_i\na u_{n+1}^{1/2} \nonumber \\
	&\phantom{xx}{} + \sum_{\ell=1}^d\bigg(\frac{\pa}{\pa x_\ell}(u_{n+1}^{1/2}u_i)
	- 3u_i\frac{\pa}{\pa x_\ell}u_{n+1}^{1/2}\bigg)\na_y w^\ell\bigg\}\cdot\na_y\phi^0_i
	dydxdt \\
  &= -\int_0^T\int_\Omega u\cdot\pa_t \phi^0 dxdt
	+ \sum_{i=1}^n D_i\int_0^T\int_\Omega  u_{n+1}^{1/2} \sum_{\ell=1}^d  
	\bigg\{\bigg(\frac{\pa}{\pa x_\ell}(u_{n+1}^{1/2}u_i)
	- 3u_i\frac{\pa}{\pa x_\ell}u_{n+1}^{1/2}\bigg)  \nonumber
	\\ &\phantom{xx}{}\times \dashint_{Y} 
	\bigg( P_l(y) \frac{\pa\phi^0_i}{\pa x_\ell}
	+ \sum_{k=1}^d P_k(y) \frac{\pa w^\ell}{\pa y_k}\frac{\pa\phi^0_i}{\pa x_k}\bigg) 
	dy \bigg\} dxdt. \nonumber
\end{align}
Then, defining the macroscopic matrix 
$D_{\rm hom}=(D_{{\rm hom},k\ell})_{k,\ell=1}^d$ by
\begin{equation}\label{Dhom}
  D_{{\rm hom},k\ell} = \dashint_Y P_k(y)\bigg(\delta_{k\ell} 
	+ \frac{\pa w^\ell}{\pa y_k}
	\bigg)dy, \quad  \text{ for } \; \; k,\ell=1,\ldots,d,
\end{equation}
we obtain the macroscopic problem \eqref{macro_2}. We deduce from 
equation \eqref{macro_eq_nonl} and the regularity of $u$ that 
$\partial_t u \in L^2(0,T; H^1(\Omega; \R^n)^\prime)$ and consequently, 
the initial conditions are satisfied in the sense of $H^1(\Omega; \R^n)'$.

In the case of the macroscopic problem \eqref{2.eq} 
with the diffusion matrix~\eqref{1.nonloc} defined in the perforated
domain $\Omega^\eps$, the convergence results of Lemma~\ref{lem.conv_2}
lead to the following two-scale problem:
\begin{align*}
   0 &= -\int_0^T\int_\Omega u\cdot\pa_t\phi \, dxdt
	+ \int_0^T\int_\Omega\dashint_{Y_1}\sum_{i=1}^n u_{n+1}^{1/2}D_i
	\Big(\na(u_{n+1}^{1/2}u_i) + \na_y V_i \\
	&\phantom{xx}{}- 3u_j\big(\na u_{n+1}^{1/2}
	+ \na_y V_{n+1}\big)\Big)\cdot(\na\phi_i^0 + \na_y\phi_i^1)\, dydxdt.
\end{align*}
We can calculate as above to find similar macroscopic equations for the
microscopic problem~\eqref{2.eq} with the only difference that the
unit-cell problem for $\widehat w^\ell$ is given by
\begin{align*}
  & \diver_y(\na_y\widehat w^\ell + e_\ell) = 0\quad\mbox{in }Y_1,\quad
	\int_{Y_1}\widehat w^\ell(y,t) = 0, \\
  & (\na_y\widehat w^\ell + e_\ell)\cdot\nu = 0\quad\mbox{on }\Gamma,
	\quad\widehat w^\ell\mbox{ is $Y$-periodic},
\end{align*}
and the macroscopic diffusion coefficients are 
\begin{equation}\label{Dhom_perfor}
  D_{{\rm hom},k\ell} = \dashint_{Y_1}\bigg(\delta_{k\ell} 
	+ \frac{\pa\widehat w^\ell}{\pa y_k}
	\bigg)dy, \quad \text{ for } \; \;  k,\ell=1,\ldots,d,
\end{equation}
Observe that the specific structure of the microscopic problem implies a
separation of variables in the two-scale problems and that consequently, 
 scalar unit-cell problems determine the macroscopic diffusion matrix.
\end{proof}


\begin{appendix}

\section{Two-scale convergence}\label{app}

We recall the definition and some properties of two-scale convergence. 
Let $\Omega\subset\R^d$ be an open set
and let $Y\subset\R^d$ be the ``periodicity cell'' identified with the 
$d$-dimensional torus with measure $|Y|$. Consider also the perforated domain 
$\Omega^\eps$ and the corresponding subsets $\overline Y_0 \subset Y$ and 
$Y_1 = Y \setminus \overline Y_0$. 

\begin{definition}[Two-scale convergence]
{\rm (i)} A sequence $(u^\eps)$ in
$L^2(\Omega)$ is {\em two-scale convergent} to $u\in L^2(\Omega\times Y)$ 
if for any smooth $Y$-periodic function $\phi:\Omega\times Y\to\R$, 
$$
  \lim_{\eps\to 0}\int_\Omega u^\eps(x)\phi\bigg(x,\frac{x}{\eps}\bigg)dx
	= \int_\Omega\dashint_Y u(x,y)\phi(x,y)dxdy.
$$
{\rm (ii)} The sequence $(u^\eps)$ is {\em strongly two-scale convergent} to
$u\in L^2(\Omega\times Y)$ if 
$$
  \lim_{\eps\to 0}\int_\Omega
	\bigg|u^\eps(x) - u\bigg(x,\frac{x}{\eps}\bigg)\bigg|^2 dx = 0.
$$
\end{definition}

\begin{remark}\rm
Let $[\cdot]^{\sim}$ denote the extension by zero in the domain 
$\Omega\setminus \Omega^\eps$ and $\chi_{\Omega^\eps}$ be the 
characteristic function of $\Omega^\eps$. 
 
{\rm (i)} If $\| u^\eps \|_{L^2(\Omega^\eps)} \leq C$, then 
$\| [u^\eps]^\sim \|_{L^2(\Omega)} \leq C$ and there exists 
$u \in L^2(\Omega\times Y)$ such that, up to a subsequence,  $[u^\eps]^\sim \rightharpoonup \chi_{Y_1} u$ 
two-scale:
\begin{align*}
  \lim_{\eps\to 0}&\int_{\Omega^\eps} u^\eps(x)\phi\bigg(x,\frac{x}{\eps}\bigg)dx 
	= \lim_{\eps\to 0}\int_{\Omega} [u^\eps(x)]^\sim \phi\bigg(x,\frac{x}{\eps}\bigg)
	dx \\ 
  &= \lim_{\eps\to 0}\int_{\Omega} [u^\eps(x)]^\sim \chi_{\Omega^\eps}(x) 
	\phi\bigg(x,\frac{x}{\eps}\bigg)dx 	
	= \int_\Omega\dashint_Y \chi_{Y_1}(y) u(x,y)\phi(x,y)dxdy.
\end{align*}
{\rm (ii)} If $u^\eps \rightharpoonup u$ two-scale with $u \in L^p(\Omega\times Y)$ then 
$$
  u^\eps \rightharpoonup \dashint_Y u(x,y) dy \quad \text{weakly in }L^p(\Omega) 
	\text{ for }p \in [1, \infty).
$$
\end{remark} 

The following results hold. 

\begin{lemma}[\cite{All92, Ngu89}]\label{lem1}
{\rm (i)} If $(u^\eps)$ is bounded in $L^2(\Omega)$, there exists
a subsequence (not relabeled) such that $u^\eps \rightharpoonup u$ two-scale as $\eps\to 0$
for some function $u\in L^2(\Omega\times Y)$. 

{\rm (ii)} If $u^\eps\rightharpoonup u$ weakly in $H^1(\Omega)$ then 
$\na u^\eps \rightharpoonup \na u(x)+\na_y u_1(x,y)$ two-scale, where 
$u_1\in L^2(\Omega;H^1_{\rm per}(Y)/\R)$.
\end{lemma}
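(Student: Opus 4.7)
The plan is to follow the classical Nguetseng--Allaire approach. For part (i), I would consider the linear functionals
$$
 L^\eps(\phi) := \int_\Omega u^\eps(x)\,\phi\bigg(x,\frac{x}{\eps}\bigg)\,dx, \quad \phi \in C_c^\infty(\Omega;C^\infty_{\rm per}(Y)).
$$
The key ingredient is the mean-value property $\|\phi(\cdot,\cdot/\eps)\|_{L^2(\Omega)} \to |Y|^{-1/2}\|\phi\|_{L^2(\Omega\times Y)}$ as $\eps\to 0$, which together with $\|u^\eps\|_{L^2(\Omega)}\le C$ gives the uniform bound $|L^\eps(\phi)|\le C\|\phi\|_{L^2(\Omega\times Y)}$. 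By separability of the test space and a diagonal extraction argument, I would pass to a subsequence along which $L^\eps(\phi)\to L(\phi)$ for every $\phi$ in a countable dense family; the uniform bound extends $L$ by density to a continuous linear form on $L^2(\Omega\times Y)$, and Riesz representation produces $u\in L^2(\Omega\times Y)$ such that $L(\phi)=\int_\Omega\dashint_Y u(x,y)\phi(x,y)\,dy\,dx$, which is the claimed two-scale convergence.

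For part (ii), apply (i) componentwise to the $L^2$-bounded sequence $(\na u^\eps)$ to obtain a subsequence and $\xi\in L^2(\Omega\times Y;\R^d)$ with $\na u^\eps\rightharpoonup\xi$ two-scale. The weak $H^1$-convergence and Rellich's theorem yield $u^\eps\to u$ strongly in $L^2(\Omega)$; combined with the weak $L^2$-convergence $\phi(\cdot,\cdot/\eps)\rightharpoonup \dashint_Y\phi(\cdot,y)dy$, this shows that the two-scale limit of $u^\eps$ is $u(x)$, i.e.\ independent of $y$. To identify the $y$-dependence of $\xi$, I would test $\na u^\eps$ against $\phi(x,x/\eps)$ with $\phi\in C_c^\infty(\Omega;C^\infty_{\rm per}(Y;\R^d))$ satisfying $\diver_y\phi=0$. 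Integration by parts together with
$$
 \diver_x\big(\phi(x,x/\eps)\big) = (\diver_x\phi)(x,x/\eps) + \eps^{-1}(\diver_y\phi)(x,x/\eps),
$$
where the singular term vanishes by the choice of $\phi$, and passage to the two-scale limit on both sides, yields
$$
 \int_\Omega\dashint_Y \xi(x,y)\cdot\phi(x,y)\,dy\,dx = -\int_\Omega\dashint_Y u(x)\,\diver_x\phi(x,y)\,dy\,dx = \int_\Omega\dashint_Y \na u(x)\cdot\phi(x,y)\,dy\,dx,
$$
so that $\int_\Omega\dashint_Y(\xi-\na u)\cdot\phi\,dy\,dx=0$ for every $y$-divergence-free $\phi$.

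The main obstacle is the final identification step: concluding from this orthogonality that $\xi(x,y)-\na u(x)=\na_y u_1(x,y)$ for some $u_1\in L^2(\Omega;H^1_{\rm per}(Y)/\R)$. This is a de Rham-type result on the torus. I would argue, for a.e.\ $x\in\Omega$, that $\xi(x,\cdot)-\na u(x)$ is orthogonal in $L^2(Y;\R^d)$ to the closed subspace of mean-zero divergence-free $Y$-periodic vector fields; by the Helmholtz decomposition on the torus, its orthogonal complement consists exactly of $y$-gradients of functions in $H^1_{\rm per}(Y)/\R$. A measurable selection argument, using the uniqueness (modulo constants) of the potential together with the $L^2$-bound inherited from $\xi-\na u$, then delivers $u_1\in L^2(\Omega;H^1_{\rm per}(Y)/\R)$ as required.
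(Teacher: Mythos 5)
The paper does not prove this lemma: it cites it directly from \cite{All92, Ngu89}. Your reconstruction is the standard Allaire--Nguetseng argument and is essentially correct: part (i) is the two-scale compactness theorem via the uniform bound on the functionals $L^\eps$, a diagonal extraction over a countable dense test family, extension by density, and Riesz representation on $L^2(\Omega\times Y)$; part (ii) identifies the two-scale limit of $\nabla u^\eps$ by testing against $y$-divergence-free fields, integrating by parts, and invoking the Helmholtz decomposition on the torus, followed by a measurable-selection argument backed by Poincar\'e--Wirtinger. One step in (ii) needs tightening: the orthogonal complement in $L^2(Y;\R^d)$ of the \emph{mean-zero} divergence-free periodic fields is the direct sum of gradients \emph{and constants}, not gradients alone. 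Your integration-by-parts identity actually holds for all test fields $\phi$ with $\diver_y\phi=0$, including those constant in $y$ (which are not mean-zero), so the orthogonality you obtain is to the full divergence-free subspace and the conclusion $\xi-\na u\in\{\na_y w:w\in H^1_{\rm per}(Y)/\R\}$ follows; alternatively, one can note that $\dashint_Y(\xi-\na u)\,dy=0$ a.e.\ because $\na u^\eps\rightharpoonup\na u$ weakly in $L^2(\Omega)$ while the $y$-average of the two-scale limit is the weak $L^2$-limit. Either patch removes the slip, and otherwise the argument matches the cited proof.
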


\begin{lemma}[\cite{All92}]\label{lem12}
Let $\|u^\eps\|_{L^2(\Omega^\eps)} + \|\nabla u^\eps\|_{L^2(\Omega^\eps)} \leq C$. 
Then, up to a  subsequence, $[u^\eps]^{\sim}$ and $[\nabla u^\eps]^{\sim}$ two-scale converge 
to $\chi_{Y_1}(y) u(x)$ and 
$\chi_{Y_1}(y)[ \nabla u(x) + \nabla_y u_1(x,y)]$ as $\eps \to 0$, respectively, where 
$u \in H^1(\Omega)$ and $u_1 \in L^2(\Omega; H^1_{\rm per}(Y_1)/\mathbb R)$. 
\end{lemma}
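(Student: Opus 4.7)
The plan is to reduce the perforated-domain statement to the standard two-scale convergence theorem (Lemma \ref{lem1}) via the extension operator, and then to read off the perforation factor $\chi_{Y_1}$ using the fact that $\chi_{\Omega^\eps}(x) = \chi_{Y_1}(x/\eps)$ is an admissible ``test'' function in the two-scale sense.

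First I would invoke the extension operator from Remark \ref{extension} (see also \cite{Cioranescu_book}) to produce $\overline{u^\eps}\in H^1(\Omega)$ with $\overline{u^\eps}|_{\Omega^\eps}=u^\eps$ and $\|\overline{u^\eps}\|_{H^1(\Omega)}\le \mu\,\|u^\eps\|_{H^1(\Omega^\eps)}\le C$, uniformly in $\eps$. By reflexivity and Rellich's theorem, after extracting a subsequence, $\overline{u^\eps}\rightharpoonup u$ weakly in $H^1(\Omega)$ and strongly in $L^2(\Omega)$ for some $u\in H^1(\Omega)$. Applying Lemma \ref{lem1}(ii) to the extended sequence yields a corrector $\widetilde u_1\in L^2(\Omega;H^1_{\rm per}(Y)/\R)$ such that $\nabla\overline{u^\eps}\rightharpoonup \nabla u(x)+\nabla_y\widetilde u_1(x,y)$ two-scale.

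Next, I would use the identities $[u^\eps]^\sim = \overline{u^\eps}\,\chi_{\Omega^\eps}$ and $[\nabla u^\eps]^\sim=(\nabla\overline{u^\eps})\,\chi_{\Omega^\eps}$, together with the observation that $\chi_{\Omega^\eps}(x)=\chi_{Y_1}(x/\eps)$. For any admissible $Y$-periodic test function $\phi(x,y)$, the product $\chi_{Y_1}(y)\phi(x,y)$ is itself admissible, so
\[
\int_\Omega[u^\eps]^\sim(x)\,\phi(x,x/\eps)\,dx
=\int_\Omega\overline{u^\eps}(x)\,\chi_{Y_1}(x/\eps)\phi(x,x/\eps)\,dx
\longrightarrow \int_\Omega\dashint_Y\chi_{Y_1}(y)\,u(x)\,\phi(x,y)\,dy\,dx,
\]
and a parallel computation (using the two-scale convergence of $\nabla\overline{u^\eps}$) gives
\[
\int_\Omega[\nabla u^\eps]^\sim\cdot\phi(x,x/\eps)\,dx
\longrightarrow \int_\Omega\dashint_Y\chi_{Y_1}(y)\bigl(\nabla u(x)+\nabla_y\widetilde u_1(x,y)\bigr)\cdot\phi(x,y)\,dy\,dx.
\]
This establishes the claimed two-scale limits with a corrector a priori defined on the whole cell $Y$.

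The remaining and most delicate point is to upgrade $\widetilde u_1\in L^2(\Omega;H^1_{\rm per}(Y)/\R)$ to a corrector $u_1\in L^2(\Omega;H^1_{\rm per}(Y_1)/\R)$. Since the outcome only involves $\chi_{Y_1}\nabla_y\widetilde u_1$, nothing in the limit depends on the values of $\widetilde u_1$ inside $\overline Y_0$; the natural choice is $u_1:=\widetilde u_1|_{Y_1}$, which lies in $L^2(\Omega;H^1(Y_1))$ and inherits $Y$-periodicity on the outer boundary $\partial Y\setminus\Gamma$, so that it defines an element of $L^2(\Omega;H^1_{\rm per}(Y_1)/\R)$. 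The main obstacle is checking that this restriction is well defined modulo constants and truly captures all two-scale limits obtainable from test fields supported in $Y_1$; the standard way around this is either to work from the beginning with test functions $\phi(x,y)$ whose $y$-support lies in $\overline Y_1$, thereby making the restriction tautological, or to invoke the fact that $H^1_{\rm per}(Y_1)/\R$ is precisely the quotient that parametrizes the corrector in the perforated geometry, as in \cite{All92}. Either route completes the identification and finishes the proof.
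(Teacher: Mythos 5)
The paper cites this lemma from \cite{All92} without reproducing a proof, so there is no internal argument to compare against; your reconstruction via the $H^1$-extension operator, standard two-scale compactness (Lemma~\ref{lem1}), and the strongly two-scale convergent factor $\chi_{Y_1}(x/\eps)$ is the standard route and is essentially correct. Three small remarks. First, $\chi_{\Omega^\eps}(x)=\chi_{Y_1}(x/\eps)$ holds only away from a boundary layer of width $O(\eps)$: holes are removed only from cells $\eps(\overline{Y}+\xi)$ entirely contained in $\Omega$, so near $\pa\Omega$ one has $\chi_{\Omega^\eps}=1$ while $\chi_{Y_1}(\cdot/\eps)$ may vanish. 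This layer has measure $O(\eps)$ and contributes nothing in the limit because $\overline{u^\eps}$ and $\nabla\overline{u^\eps}$ are uniformly bounded in $L^2$. Second, passing to the limit in the gradient term requires the product rule (Lemma~\ref{lem4}), since $\nabla\overline{u^\eps}$ converges only weakly two-scale; your assertion that $\chi_{Y_1}(y)\phi(x,y)$ is ``admissible'' is precisely the statement that $\|\chi_{Y_1}(\cdot/\eps)\phi(\cdot,\cdot/\eps)\|_{L^2(\Omega)}\to\|\chi_{Y_1}\phi\|_{L^2(\Omega\times Y)}$, which follows from the mean-value property of periodic $L^1$ functions, so the hypotheses of Lemma~\ref{lem4} are met. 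Third, the restriction step is cleaner than your closing paragraph suggests: the two-scale limit only involves $\chi_{Y_1}(y)\nabla_y\widetilde u_1$, which depends solely on $\widetilde u_1|_{Y_1}$, and this quantity is determined by the zero extension $[\nabla u^\eps]^{\sim}$ and therefore independent of the chosen extension operator. Hence $u_1:=\widetilde u_1|_{Y_1}\in L^2(\Omega;H^1_{\rm per}(Y_1)/\R)$ is well defined modulo constants with no further work, and there is no need to restrict the class of test fields or appeal to the parametrization of correctors.
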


\begin{lemma}[\cite{All92, Ngu89}]\label{lem4}
Let $(u^\eps)\subset L^2(\Omega)$ converges two-scale to $u\in L^2(\Omega\times Y)$,
$\|u^\eps\|_{L^2(\Omega)}\to \|u\|_{L^2(\Omega\times Y)}$ as $\eps\to 0$, and let
$(v^\eps)\subset L^2(\Omega)$ converges two-scale to $v\in L^2(\Omega\times Y)$. Then,
as $\eps\to 0$,
$$
  \int_\Omega u^\eps v^\eps dx \to \int_\Omega\dashint_Y u(x,y)v(x,y)dxdy.
$$
\end{lemma}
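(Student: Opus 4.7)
The strategy is the standard approximation argument that makes the norm-convergence hypothesis on $u^\eps$ play the role of "strong two-scale convergence." The plan is to approximate $u$ in $L^2(\Omega\times Y)$ by admissible test functions $\phi_\delta\in C(\overline{\Omega};C_{\rm per}(Y))$ (such functions are dense, by a standard mollification together with Fourier series in $y$), and then to split
$$
  \int_\Omega u^\eps(x)v^\eps(x)\,dx
  = \int_\Omega \phi_\delta\Big(x,\tfrac{x}{\eps}\Big) v^\eps(x)\,dx
  + \int_\Omega \Big(u^\eps(x)-\phi_\delta\Big(x,\tfrac{x}{\eps}\Big)\Big) v^\eps(x)\,dx.
$$

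For the first term, the two-scale convergence of $v^\eps$ against the admissible test function $\phi_\delta$ immediately gives the limit $\int_\Omega\dashint_Y \phi_\delta(x,y)v(x,y)\,dy\,dx$, which by $L^2$-density tends to $\int_\Omega\dashint_Y u(x,y)v(x,y)\,dy\,dx$ as $\delta\to 0$. The second term I would estimate by Cauchy–Schwarz using that $\|v^\eps\|_{L^2(\Omega)}$ is bounded (two-scale convergent sequences are bounded in $L^2(\Omega)$). Thus it suffices to show
$$
  \limsup_{\eps\to 0}\Big\|u^\eps-\phi_\delta\big(\cdot,\tfrac{\cdot}{\eps}\big)\Big\|_{L^2(\Omega)}^2
  \le \|u-\phi_\delta\|_{L^2(\Omega\times Y)}^2,
$$
which can be made arbitrarily small by choosing $\delta$ small.

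To establish this inequality, I would expand the square:
$$
  \Big\|u^\eps-\phi_\delta\big(\cdot,\tfrac{\cdot}{\eps}\big)\Big\|_{L^2(\Omega)}^2
  = \|u^\eps\|_{L^2(\Omega)}^2 - 2\int_\Omega u^\eps\phi_\delta\big(x,\tfrac{x}{\eps}\big)dx
  + \Big\|\phi_\delta\big(\cdot,\tfrac{\cdot}{\eps}\big)\Big\|_{L^2(\Omega)}^2.
$$
The first term converges to $\|u\|_{L^2(\Omega\times Y)}^2$ by hypothesis. The middle term converges to $2\int_\Omega\dashint_Y u(x,y)\phi_\delta(x,y)\,dy\,dx$ by two-scale convergence of $u^\eps$ tested against $\phi_\delta$. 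The last term converges to $\|\phi_\delta\|_{L^2(\Omega\times Y)}^2$ by the usual mean-value/Riemann–Lebesgue argument for periodic functions (writing $|\phi_\delta(x,y)|^2$ as an admissible test function and using that $1\rightharpoonup 1$ two-scale). Reassembling these three limits yields exactly $\|u-\phi_\delta\|_{L^2(\Omega\times Y)}^2$.

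Combining everything and letting first $\eps\to 0$ and then $\delta\to 0$ gives the claim. The main obstacle is really the density step: one must know that smooth admissible test functions are dense in $L^2(\Omega\times Y)$ in such a way that their traces $\phi_\delta(x,x/\eps)$ are measurable and satisfy the "strong two-scale" identity $\|\phi_\delta(\cdot,\cdot/\eps)\|_{L^2(\Omega)}\to\|\phi_\delta\|_{L^2(\Omega\times Y)}$. Once this admissibility is in place, the rest of the proof is a straightforward limsup/liminf squeeze.
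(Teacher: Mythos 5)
The paper does not give a proof of Lemma~\ref{lem4}: it is cited directly from \cite{All92, Ngu89} as a known property of two-scale convergence. Your argument is correct and is essentially the standard proof of Allaire's Theorem~1.8 on ``strong two-scale convergence'': approximate $u$ in $L^2(\Omega\times Y)$ by admissible test functions $\phi_\delta$, use the two-scale convergence of $v^\eps$ and boundedness of $\|v^\eps\|_{L^2(\Omega)}$ to reduce the matter to $\limsup_\eps\|u^\eps-\phi_\delta(\cdot,\cdot/\eps)\|_{L^2(\Omega)}$, and control that quantity by expanding the square and using the hypothesis $\|u^\eps\|_{L^2(\Omega)}\to\|u\|_{L^2(\Omega\times Y)}$ together with the mean-value property of oscillating periodic functions. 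The limsup/liminf squeeze as $\eps\to 0$ then $\delta\to 0$ closes the argument, exactly as in the reference.
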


To define the unfolding operator, let $[z]$ for any $z\in\R^d$ denotes the 
unique  combination $\sum_{i=1}^d k_ie_i$ with $k\in\mathbb Z^d$,   
such that $z-[z]\in Y$, where $e_i$ is the $i$th
canonical basis vector of $\R^d$.

\begin{definition}[\cite{Cio12}] \label{unfold}
Let $p\in[1,\infty]$ and $\phi\in L^p(\Omega)$. Then the unfolding 
operator $\T^\eps$ is defined by $\T^\eps(\phi)\in L^p(\R^d\times Y)$, where
$$
  \T^\eps(\phi)(x,y) = \phi\bigg(\eps \bigg[\frac{x}{\eps}\bigg] + \eps y\bigg) 
	\quad \text{for a.e. }(x,y) \in \Omega \times Y.
$$
Furthermore, for $\psi\in L^p(\Omega^\eps)$, the unfolding operator
$\T^\eps_{Y_1}$ is defined by
$$
  \T^\eps_{Y_1}(\psi)(x,y) = \psi\bigg(\eps \bigg[\frac{x}{\eps}\bigg] + \eps y\bigg) 
	\quad \text{for a.e. }(x,y) \in \Omega \times Y_1.
$$
\end{definition}

For any function $\psi$ defined on $\Omega^\eps$, we have $\T^\eps_{Y_1}(\psi)  
= \T^\eps ([\psi]^{\sim})|_{\Omega \times Y_1}$, whereas  
for $\phi$ defined on $\Omega$, it holds that
$\T^\eps_{Y_1}(\phi|_{\Omega^\eps}) = \T^\eps (\phi)|_{\Omega \times Y_1}$. 
The following result relates the two-scale convergence and the weak convergence
involving the unfolding operator.

\begin{proposition}[\cite{Cio08}]\label{prop.unfold}
Let $(\psi^\eps)$ be a bounded sequence in $L^p(\Omega)$ for some $1<p<\infty$.   
Then the following assertions are equivalent:
\begin{itemize} 
\item[(i)] $(\T^\eps (\psi^\eps))$ converges weakly to $\psi$ in
 $L^p(\Omega\times Y)$.
\item[(ii)] $(\psi^\eps)$ converges two-scale to $\psi$. 
\end{itemize}
\end{proposition}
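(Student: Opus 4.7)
The plan is to exploit the fundamental unfolding identity, which says that for any $\psi\in L^p(\Omega)$ and any admissible test function $\phi(x,y)$,
$$
  \int_\Omega \psi(x)\phi\bigg(x,\frac{x}{\eps}\bigg)dx
  = \frac{1}{|Y|}\int_{\Omega\times Y}\T^\eps(\psi)(x,y)\,
    \phi\bigg(\eps\Big[\frac{x}{\eps}\Big]+\eps y,\, y\bigg)dydx
  + R_\eps,
$$
where $R_\eps$ collects contributions from the $\eps$-boundary cells (those cells $\eps(\xi+Y)$ that intersect $\pa\Omega$) and satisfies $|R_\eps|\to 0$ as $\eps\to 0$, provided $\psi$ is bounded in $L^p(\Omega)$ and $\phi$ is continuous on $\overline\Omega\times Y$. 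Combined with the norm identity $\|\T^\eps(\psi)\|_{L^p(\Omega\times Y)}\le |Y|^{1/p}\|\psi\|_{L^p(\Omega)}$, this will be the only real tool needed.

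For the implication (i)$\Rightarrow$(ii), fix $\phi\in C(\overline\Omega;C_{\rm per}(Y))$ and apply the identity above to $\psi=\psi^\eps$. The unfolded test function $\phi(\eps[x/\eps]+\eps y,y)$ converges to $\phi(x,y)$ uniformly on $\Omega\times Y$, hence strongly in $L^{p'}(\Omega\times Y)$. Since $\T^\eps(\psi^\eps)\rightharpoonup\psi$ weakly in $L^p(\Omega\times Y)$, the product of weak and strong convergence gives
$$
  \int_\Omega \psi^\eps(x)\phi\bigg(x,\frac{x}{\eps}\bigg)dx
  \to \frac{1}{|Y|}\int_{\Omega\times Y}\psi(x,y)\phi(x,y)\,dydx,
$$
which is the definition of two-scale convergence of $\psi^\eps$ to $\psi$.

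For the converse (ii)$\Rightarrow$(i), the boundedness of $(\psi^\eps)$ in $L^p(\Omega)$ together with the norm inequality shows that $(\T^\eps(\psi^\eps))$ is bounded in $L^p(\Omega\times Y)$. By reflexivity (recall $1<p<\infty$), some subsequence converges weakly to a limit $\Psi\in L^p(\Omega\times Y)$. Applying the implication already proved to this subsequence identifies its two-scale limit as $\Psi$; but by assumption the two-scale limit is $\psi$, so $\Psi=\psi$. Uniqueness of the limit forces the whole sequence $\T^\eps(\psi^\eps)$ to converge weakly to $\psi$, which is (i).

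The only delicate point is the control of the boundary remainder $R_\eps$ in the unfolding identity, which requires that $(\psi^\eps)$ is equi-integrable on the boundary strip of thickness $\eps$; this follows from the uniform $L^p$-bound via H\"older's inequality and the fact that the measure of the strip tends to zero. Everything else is the straightforward pairing of weak with uniform convergence.
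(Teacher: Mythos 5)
The paper does not prove Proposition~\ref{prop.unfold}; it cites it from \cite{Cio08}, so there is no in-paper argument to compare against. Your proof is correct and reproduces the standard route from that reference: the exact integration (unfolding) formula with a boundary-strip remainder, the $L^p$ bound $\|\T^\eps(\psi)\|_{L^p(\Omega\times Y)}\le|Y|^{1/p}\|\psi\|_{L^p(\Omega)}$, uniform convergence of the unfolded test function $\phi(\eps[x/\eps]+\eps y,y)\to\phi(x,y)$ paired against the weak convergence of $\T^\eps(\psi^\eps)$, and for the converse a subsequence–reflexivity argument closed by uniqueness of the two-scale limit. The one delicate point, vanishing of the boundary term $R_\eps$, you correctly isolate and resolve: it needs $|\Lambda_\eps|\to 0$ (true for Lipschitz $\partial\Omega$, as assumed in the paper's geometric setup) together with $p<\infty$ so that $|\Lambda_\eps|^{1/p'}\to 0$; note that this is precisely the step that would fail for $p=\infty$, which is why the hypothesis $1<p<\infty$ is essential.
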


\begin{theorem} [\cite{Cio12}]\label{conv_unfold}
Let $(\psi^\eps)$ be a bounded sequence in $W^{1, p}(\Omega^\eps)$ for some
$1\leq p < \infty$. Then  there exist functions $\psi \in W^{1, p}(\Omega)$ and 
$\psi_1 \in L^p(\Omega; W^{1, p}_{\rm per}(Y_1)/\R)$ such that as $\eps\to 0$, 
up to a subsequence,  
\begin{align*} 
  & \mathcal T^\eps_{Y_1}(\psi^\eps) \rightharpoonup\psi 
	&& \text{weakly in } L^p(\Omega; W^{1,p}(Y_1)), \\
  & \mathcal T^\eps_{Y_1}(\psi^\eps) \to  \psi 
	&& \text{strongly  in } L^p_{\rm loc}(\Omega; W^{1,p}(Y_1)), \\
  & \mathcal T^\eps_{Y_1}(\na\psi^\eps) \rightharpoonup\na \psi  
	+ \na_y \psi_1 && \text{weakly in } L^p(\Omega\times Y_1). 
\end{align*} 
\end{theorem}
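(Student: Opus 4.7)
The plan is to combine three ingredients: the algebraic properties of the unfolding operator $\T^\eps_{Y_1}$, a standard Sobolev extension of $\psi^\eps$ from the perforated domain $\Omega^\eps$ to $\Omega$ (as in Remark \ref{extension}), and a corrector construction on the reference cell $Y_1$. Ultimately the argument reduces the perforated case to the classical unfolding convergence on the full domain $\Omega$ (the case $Y_0=\emptyset$).

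First I would exploit the norm estimate $\|\T^\eps_{Y_1}(\phi)\|_{L^p(\Omega\times Y_1)}\le C\|\phi\|_{L^p(\Omega^\eps)}$ and the commutation identity $\na_y \T^\eps_{Y_1}(\psi^\eps)=\eps\,\T^\eps_{Y_1}(\na\psi^\eps)$, both immediate from Definition \ref{unfold}. Together with $\|\psi^\eps\|_{W^{1,p}(\Omega^\eps)}\le C$, these give a uniform bound on $\T^\eps_{Y_1}(\psi^\eps)$ in $L^p(\Omega;W^{1,p}(Y_1))$ while forcing $\|\na_y\T^\eps_{Y_1}(\psi^\eps)\|_{L^p(\Omega\times Y_1)}\le C\eps$. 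Extracting a weakly convergent subsequence yields a limit $\Psi\in L^p(\Omega;W^{1,p}(Y_1))$ with $\na_y\Psi\equiv 0$, so $\Psi(x,y)=\psi(x)$ for some $\psi\in L^p(\Omega)$.

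Second I would identify the $x$-regularity of $\psi$ through the extension. By Remark \ref{extension}, an extension $\overline{\psi^\eps}\in W^{1,p}(\Omega)$ exists with $\|\overline{\psi^\eps}\|_{W^{1,p}(\Omega)}\le C$; up to a subsequence, $\overline{\psi^\eps}\rightharpoonup\tilde\psi$ weakly in $W^{1,p}(\Omega)$. The classical (non-perforated) unfolding compactness applied to $\overline{\psi^\eps}$ gives $\T^\eps(\overline{\psi^\eps})\to\tilde\psi$ strongly in $L^p_{\rm loc}(\Omega;W^{1,p}(Y))$ and $\T^\eps(\na\overline{\psi^\eps})\rightharpoonup\na\tilde\psi+\na_y\tilde\psi_1$ weakly in $L^p(\Omega\times Y)$ for some $\tilde\psi_1\in L^p(\Omega;W^{1,p}_{\rm per}(Y)/\R)$. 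For any compact $K\subset\Omega$ and $\eps$ small, the point $\eps[x/\eps]+\eps y$ lies in $\Omega^\eps$ for every $(x,y)\in K\times Y_1$ (the $\eps$-cell containing $x$ is inside $\Omega$ and $y\notin\overline{Y_0}$), so $\T^\eps(\overline{\psi^\eps})|_{K\times Y_1}=\T^\eps_{Y_1}(\psi^\eps)|_{K\times Y_1}$ and similarly for the gradient. Comparing weak limits forces $\psi=\tilde\psi\in W^{1,p}(\Omega)$, which simultaneously delivers the strong $L^p_{\rm loc}(\Omega;W^{1,p}(Y_1))$ convergence.

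For the gradient limit I would build a corrector directly. Set
\begin{equation*}
\psi_1^\eps(x,y):=\frac{1}{\eps}\bigg(\T^\eps_{Y_1}(\psi^\eps)(x,y)-\dashint_{Y_1}\T^\eps_{Y_1}(\psi^\eps)(x,z)\,dz-y\cdot\na\psi(x)+\dashint_{Y_1}z\cdot\na\psi(x)\,dz\bigg),
\end{equation*}
so that $\na_y\psi_1^\eps=\T^\eps_{Y_1}(\na\psi^\eps)-\na\psi(x)$ and $\int_{Y_1}\psi_1^\eps\,dy=0$. The Poincar\'e--Wirtinger inequality on the connected set $Y_1$ yields $\|\psi_1^\eps\|_{L^p(\Omega;W^{1,p}(Y_1))}\le C$; a subsequence converges weakly to some mean-zero $\psi_1\in L^p(\Omega;W^{1,p}(Y_1))$, whence $\T^\eps_{Y_1}(\na\psi^\eps)\rightharpoonup\na\psi+\na_y\psi_1$. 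The main obstacle is the last upgrade $\psi_1\in L^p(\Omega;W^{1,p}_{\rm per}(Y_1)/\R)$, i.e.\ the $Y$-periodicity of $\psi_1$: since $\psi^\eps\in W^{1,p}(\Omega^\eps)$ has matching traces across opposite faces of adjacent $\eps$-cells that are entirely contained in $\Omega^\eps$, the corresponding face-traces of $\T^\eps_{Y_1}(\psi^\eps)(x,\cdot)$ from two $x$-cells differing by $\eps e_k$ coincide; exploiting the strong $L^p_{\rm loc}$ convergence established above to pass to the limit in this face-matching relation, and absorbing constants by the mean-zero normalization, yields $\psi_1|_{\text{face}=0}=\psi_1|_{\text{face}=b_k}$ in the trace sense, i.e.\ periodicity in the quotient $W^{1,p}_{\rm per}(Y_1)/\R$.
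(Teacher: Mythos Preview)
The paper does not prove this theorem: it is stated in Appendix~\ref{app} as a quotation from \cite{Cio12} with no argument supplied, so there is nothing in the paper to compare your proposal against. Your sketch follows essentially the strategy of the cited reference (bound $\T^\eps_{Y_1}(\psi^\eps)$ in $L^p(\Omega;W^{1,p}(Y_1))$ via $\na_y\T^\eps_{Y_1}=\eps\,\T^\eps_{Y_1}\na$, identify the $y$-independent limit, upgrade $x$-regularity through an extension, then build a mean-zero corrector and read off periodicity from trace matching across adjacent cells), so in spirit it is the ``same'' proof.

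One concrete slip: as written, your corrector
\[
\psi_1^\eps(x,y)=\frac{1}{\eps}\Big(\T^\eps_{Y_1}(\psi^\eps)-\dashint_{Y_1}\T^\eps_{Y_1}(\psi^\eps)\,dz-y\cdot\na\psi+\dashint_{Y_1}z\cdot\na\psi\,dz\Big)
\]
does \emph{not} satisfy $\na_y\psi_1^\eps=\T^\eps_{Y_1}(\na\psi^\eps)-\na\psi$; you get instead $\T^\eps_{Y_1}(\na\psi^\eps)-\eps^{-1}\na\psi$, and the sequence itself is unbounded because $\eps^{-1}y\cdot\na\psi$ blows up. The factor $1/\eps$ must multiply only the first two terms (equivalently, replace $y\cdot\na\psi$ by $\eps\,y\cdot\na\psi$ inside the bracket). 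With that fix, Poincar\'e--Wirtinger on $Y_1$ gives the claimed bound and the rest of your argument goes through. The periodicity step you outline is the delicate part in \cite{Cio12} as well; your face-matching description is correct in outline but would need the precise shift identity $\T^\eps_{Y_1}(\psi^\eps)(x+\eps b_k e_k,y)=\T^\eps_{Y_1}(\psi^\eps)(x,y)$ on interior cells, combined with the strong local convergence, to be made rigorous.
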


\begin{lemma}[\cite{Cioranescu_book, HJ91}]
{\rm (i)} For $u \in H^1(Y_1)$, there exists an extension $\overline u$ into $Y_0$ 
and thus onto $Y$ such that 
$$
  \|\overline u\|_{L^2(Y)} \leq C \| u\|_{L^2(Y_1)}, \quad  
  \| \na \overline u\|_{L^2(Y)} \leq C \| \na  u\|_{L^2(Y_1)}.
$$
{\rm (ii)} For $u \in H^1(\Omega^\eps)$ there exists an extension $\overline u$ 
into $\Omega$ such that 
$$
  \|\overline u\|_{L^2(\Omega)} \leq C \| u\|_{L^2(\Omega^\eps)}, \quad  
	\| \na \overline u\|_{L^2(\Omega)} \leq C \| \na  u\|_{L^2(\Omega^\eps)}, 
$$
where the constant $C$ is independent of $\eps$.   
\end{lemma}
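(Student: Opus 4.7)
Both parts are classical and I would reconstruct their proofs along the lines of the Acerbi--Chiad\`o Piat--Dal Maso--Percivale construction for (i), followed by a cell-by-cell rescaling argument for (ii).

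For part (i), my plan is to combine a Stein-type extension operator with a cutoff localized near the hole. First I would choose an auxiliary open set $U$ with $\overline{Y_0}\subset U$ and $\overline{U}\subset Y$, together with a cutoff $\eta\in C_c^\infty(U)$ equal to $1$ on a neighborhood of $\overline{Y_0}$. Since $\Gamma=\pa Y_0$ is Lipschitz, $U\cap Y_1$ is a bounded Lipschitz domain, so a bounded linear Stein extension $E:H^1(U\cap Y_1)\to H^1(U)$ exists. I would then define
\[
\overline u := (1-\eta)\,u + \eta\,E(u|_{U\cap Y_1}),
\]
understanding $(1-\eta)u$ as $0$ on $Y_0$, which is consistent since $1-\eta$ vanishes on a neighborhood of $\overline{Y_0}$. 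On $Y_1$ this reduces to $u$, so $\overline u$ is an extension belonging to $H^1(Y)$.

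The \emph{main obstacle} is the sharp gradient estimate $\|\na\overline u\|_{L^2(Y)}\le C\|\na u\|_{L^2(Y_1)}$ \emph{without} an $\|u\|_{L^2(Y_1)}$ contribution on the right, as a direct application of $E$ would produce both. The remedy is to apply $E$ not to $u$ itself but to $u-\langle u\rangle_{U\cap Y_1}$, where $\langle u\rangle_{U\cap Y_1}$ is the average over $U\cap Y_1$, and then add the constant back; constants do not affect the gradient, while the Poincar\'e inequality on $U\cap Y_1$ bounds $\|u-\langle u\rangle_{U\cap Y_1}\|_{L^2(U\cap Y_1)}$ by $C\|\na u\|_{L^2(Y_1)}$. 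The $L^2$ bound on $\overline u$ then follows separately from the $L^2$-boundedness of $E$ and of multiplication by $\eta$.

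For part (ii), I would patch together the extensions from (i) cell-by-cell. For each $\xi\in\Xi^\eps$, set $u_\xi(y):=u(\eps(\xi+y))$ for $y\in Y_1$ and apply (i) to obtain $\overline{u_\xi}\in H^1(Y)$. On the hole $\eps(Y_0+\xi)$ define $\overline u(x):=\overline{u_\xi}((x/\eps)-\xi)$; on the remaining boundary strip $\Omega\setminus\bigcup_{\xi\in\Xi^\eps}\eps(Y_0+\xi)$, which by definition of $\Xi^\eps$ lies in $\Omega^\eps$, take $\overline u:=u$. The pieces agree across all cell interfaces because the extension leaves $u$ unchanged on $\eps(Y_1+\xi)$, so $\overline u\in H^1(\Omega)$. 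A change of variables yields
\[
\|\overline u\|_{L^2(\eps(Y+\xi))}^2 = \eps^d\,\|\overline{u_\xi}\|_{L^2(Y)}^2 \le C\eps^d\,\|u_\xi\|_{L^2(Y_1)}^2 = C\,\|u\|_{L^2(\eps(Y_1+\xi))}^2,
\]
and, crucially using the sharp form of (i),
\[
\|\na\overline u\|_{L^2(\eps(Y+\xi))}^2 = \eps^{d-2}\,\|\na_y\overline{u_\xi}\|_{L^2(Y)}^2 \le C\eps^{d-2}\,\|\na_y u_\xi\|_{L^2(Y_1)}^2 = C\,\|\na u\|_{L^2(\eps(Y_1+\xi))}^2.
\]
Summing over $\xi\in\Xi^\eps$ and adding the trivial contribution from the boundary strip gives the claimed $\eps$-independent bounds. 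Without the sharp form of (i), the $L^2$-term in the gradient estimate would scale as $\eps^{-2}$ and destroy the uniformity in $\eps$, which is precisely why step~(i) had to be sharpened.
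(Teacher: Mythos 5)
Your proposal is correct and follows essentially the same route as the paper's sketch: in both cases the sharp gradient bound in (i) is obtained by subtracting a mean, extending the zero-mean part by a standard Sobolev/Stein extension, and invoking Poincar\'e to absorb the $L^2$ contribution into $\|\nabla u\|_{L^2(Y_1)}$, while (ii) follows by the cell-by-cell change of variables that turns the $\eps^{d-2}$ vs.\ $\eps^{d}$ scaling mismatch into the reason the sharp form of (i) is indispensable. Your use of a cutoff and a localized Stein operator on $U\cap Y_1$ is merely a more explicit realization of the ``standard extension results'' the paper invokes for the zero-mean part $\psi$.
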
 

\begin{proof}[Sketch of the proof.] 
We can write $u = \dashint_{Y_1} u dy + \psi$, where $\dashint_{Y_1} \psi dy = 0$.  
By standard extension results, we obtain an extension $\overline \psi \in H^1(Y)$ of 
$\psi$.  The definition $\overline u = \dashint_{Y_1} u dy + \overline \psi$ and 
the Poincar\'e inequality imply the results stated in (i).  
The results in (i) and a scaling argument ensure the existence of an extension from $
\Omega^\eps$ into $\Omega$ and estimates in (ii) uniform in $\eps$. 
\end{proof} 

The same results hold also for $u \in W^{1, p}(\Omega^\eps)$,  with $1\leq p < \infty$,
see, e.g., \cite{Acerbi}.

Notice that the corresponding extension operator is linear and continuous from 
$H^1(\Omega^\eps)$ to $H^1(\Omega)$ and by the construction of the extension,  
we have $\overline u = u $ in $\Omega^\eps$. 


\section{Proof of Lemma~\ref{Lemma_entropy}. } \label{sec.lemma}

Consider the entropy density
\begin{equation}\label{entropy_dens}
  h(u) = \sum_{i=1}^{n+1} (u_i\log u_i - u_i + 1)\quad\text{for }  
	u=(u_1,\ldots,u_n) \in \dom,  
\end{equation}
where $u_{n+1}=1-\sum_{i=1}^n u_i$.
Since $h^\prime(u) = (\log(u_1/u_{n+1}), \ldots, \log(u_n/u_{n+1}))$ is invertible 
on $\dom$, the solutions of the microscopic problem are bounded, $u\in\overline{\dom}$.
By Lemma 7 in \cite{ZaJu17}, it holds for all $z\in\R^n$ and $u\in\dom$ that
$$
  z^\top h''(u)A(u)z \ge p_0u_{n+1}\sum_{i=1}^n\frac{z_i^2}{u_i}
	+ \frac{p_0}{2}\frac{1}{u_{n+1}}\bigg(\sum_{i=1}^n z_i\bigg)^2, 
$$
where $p_0 = \min_{i=1,\ldots,n}D_i>0$. 
This shows that for suitable functions $u=(u_1,\ldots,u_n)$,
$$
  \na u:h''(u)A(u)\na u \ge 4p_0u_{n+1}\sum_{i=1}^n|\na u_i^{1/2}|^2
	+ 2p_0|\na u_{n+1}^{1/2}|^2.
$$

The entropy inequality is derived formally from the weak formulation of \eqref{2.eq}
by choosing the test function $w^\eps=h'(u^\eps)$. Since this function is not in
$L^2(0,T;H^1(\Omega))$, we need to consider a regularization. We define
\begin{align*} 
  & w_{\delta}^\eps(u^\eps) = h^\prime(u^\eps_{\delta}) \quad\text{and}\quad 
	\phi^\eps_{\sigma,\delta} = \dfrac{ (u_{n+1}^\eps)^{1/2} }{(u_{n+1}^\eps)^{1/2} 
	+ \sigma }  w^\eps_\delta(u^\eps),  \quad \text{where} \\
  & u^\eps_{\delta, j} = \frac{ u_{j}^\eps +  \delta_1}{1+\delta}, \quad
  u^\eps_{\delta, n+1} = \frac{ u^\eps_{n+1} + \frac  \delta 2} {1+ \delta}
	\quad\text{for } \delta >0, \ \delta_1 = \frac{\delta}{2n}, \ j = 1, \ldots, n.
\end{align*}
Thanks to the regularity properties of $u_i^\eps$, the function 
\begin{align*}
  \na\phi^\eps_{\sigma,\delta,i} 
	&= \frac{ (u_{n+1}^\eps)^{1/2}}{(u_{n+1}^\eps)^{1/2} + \sigma} 
	\bigg(\frac{\na u_i^\eps }{u_i^\eps + \delta_1}
  + \frac{\na u_{n+1}^\eps }{u_{n+1}^\eps + \delta/2 } \bigg) \\
  &\phantom{xx}{}
	+ w_{\delta, i}^\eps(u^\eps) \bigg(\frac{\na(u_{n+1}^\eps)^{1/2}}{(u_{n+1}^\eps)^{1/2} 
	+ \sigma} - \frac{(u_{n+1}^\eps)^{1/2} \na(u_{n+1}^\eps)^{1/2} }{(
	(u_{n+1}^\eps)^{1/2} + \sigma)^2 } \bigg)
\end{align*}
is in $L^2(\Omega_T)$ for each fixed $\sigma$, $\delta >0$. Thus, we can use 
$\phi^\eps_{\sigma, \delta} $ as a test function in \eqref{weak_2}:
\begin{align}\label{weak_regular_2} 
  \sum_{i=1}^n &\int_{\Omega_T} P^\eps(x) D_i (u^\eps_{n+1})^{1/2}
	\big(\na(u^\eps_i(u^\eps_{n+1})^{1/2}) - 3u_i^\eps\na(u^\eps_{n+1})^{1/2}\big)
	\cdot\na\phi^\eps_{\sigma, \delta, i}   dx dt  \nonumber \\
  &{}+ \sum_{i=1}^n \int_0^T \langle \pa_t u^\eps_i,\phi^\eps_{\sigma, \delta, i}
	\rangle dt = 0. 
 \end{align}

The nonnegativity of  $u_{n+1}^\eps$ and $u_j^\eps$  yields the  
pointwise monotone convergences 
\begin{alignat*}{3} 
   &\frac{ (u_{n+1}^\eps)^{1/2}}{(u_{n+1}^\eps)^{1/2} + \sigma}  \to 1, \quad
	&&\frac{u_{n+1}^\eps}{[(u_{n+1}^\eps)^{1/2} + \sigma]^2}  \to 1 \quad
	&&\text{as } \sigma \to 0, \\
  & \frac{ u^\eps_{i} }{ u_i^\eps + \delta/2n} \to 1, \quad
	&& \frac{u^\eps_{n+1}}{ u_{n+1}^\eps + \delta/2} \to 1 \quad
	&& \text{as } \delta \to 0.
\end{alignat*} 
As these four sequences are uniformly bounded by 1, they converge strongly  
in $L^p(\Omega_T)$ for any $1< p < \infty$. 
Thus, the definition of $w_\delta^\eps(u^\eps)$ and the $L^2$-regularity of 
$(u_{n+1}^\eps)^{1/2} \na u_i^\eps$, $(u_{n+1}^\eps)^{1/2} \na (u_i^\eps)^{1/2}$,  
$\na (u_{n+1}^\eps)^{3/2}$, and $\na (u_{n+1}^\eps)^{\frac 12 }$ ensure that
$$
  (u_{n+1}^\eps)^{1/2} \na\phi^\eps_{\sigma, \delta,  i}  
	\to \frac {(u_{n+1}^\eps)^{1/2}\na u_{i}^\eps}{u^\eps_i + \delta/2n} 
	+  \frac{ (u_{n+1}^\eps)^{1/2}\na u_{n+1}^\eps }{u_{n+1}^\eps + \delta/2 }    
	\quad \text{strongly in } L^2(\Omega_T), 
$$
as $\sigma \to 0$, 
and the sequences 
\begin{alignat*}{2} 
  &\frac{u_i^\eps}{u_i^\eps+\delta/2n}  (u_{n+1}^\eps)^{1/2}  \na u_i^\eps
	\cdot \na(u_{n+1}^\eps)^{1/2}, \quad 
  && \frac{u_i^\eps}{u_i^\eps+\delta/2n} u_{n+1}^\eps  |\na (u_i^\eps)^{1/2} |^2, \\
  &\frac{u_{n+1}^\eps}{u_{n+1}^\eps + \delta/2}u_i^\eps|\na (u_{n+1}^\eps)^{1/2}|^2, 
	\quad 
  && \frac{u_{n+1}^\eps}{u_{n+1}^\eps + \delta/2}(u_{n+1}^\eps)^{1/2}\na u_i^\eps 
	\cdot \nabla(u_{n+1}^\eps)^{1/2}
\end{alignat*} 
convergence, up to a subsequence, strongly in $L^1(\Omega_T)$ as $\delta \to 0$, 
for $i=1, \ldots, n$. 
The pointwise convergence of $u_{\delta, j}^\eps$ as $\delta \to 0$ and the
boundedness of the function $s\mapsto s\log s$ for $s\in[0,1]$ ensure the
convergence of $h(u_\delta^\eps)$ in $L^1(\Omega_T)$. 
Rearranging the terms in \eqref{weak_regular_2} and letting first $\sigma \to 0$ 
and then $\delta \to 0$ yields the entropy inequality \eqref{4.epi}.

The same calculations yield entropy estimate for solutions of problem 
\eqref{2.eq}-\eqref{2.bic} with diffusion matrix \eqref{1.nonloc}.


\section{Examples satisfying Assumption A6}\label{app.ex}

We present two cross-diffusion systems whose diffusion matrix and associated
entropy density satisfy Assumption A6.
The first example appears in biofilm modeling. A biofilm is an aggregate of
microorganisms consisting of several subpopulations of bacteria, algae,
protozoa, etc. We assume that the biofilm consist of 
three subpopulations and that it is saturated, i.e., the volume fractions
of the subpopulations $u_i$ sum up to one. Therefore, the volume fraction
of one subpopulation can be expressed by the remaining ones, $u_3=1-u_1-u_2$.
A heuristic approach to define the diffusion fluxes \cite{WED18} 
leads to the cross-diffusion system \eqref{1.eq} with diffusion matrix
$$
  A(u) = \begin{pmatrix}
	D_1(1-u_1) & -D_2u_1 \\ -D_1u_2 & D_2(1-u_2)
	\end{pmatrix},
$$
where $D_1>0$ and $D_2>0$ are some diffusion coefficients. Taking the entropy density
\begin{equation}\label{a.h}
  h(u) = \sum_{i=1}^2 u_i(\log u_i-1) + (1-u_1-u_2)(\log(1-u_1-u_2)-1),
\end{equation}
we compute
$$
  h''(u)A(u) = \begin{pmatrix} D_1/u_1 & 0 \\ 0 & D_2/u_2 \end{pmatrix}. 
$$
This shows that Assumptions A2 and A6 are satisfied with $s_1=s_2=-1/2$.

The second example is a model that describes the evolution of an avascular 
tumor. During the avascular stage, the tumor remains in a diffusion-limited, dormant
stage with a diameter of a few millimeters. We suppose that the tumor growth
can be described by the volume fraction $u_1$ of tumor cells, the volume fraction
of the extracellular matrix $u_2$ (a mesh of fibrous proteins and polysaccharides), 
and the volume fraction of water/nutrients $u_3=1-u_1-u_2$.
Jackson and Byrne \cite{JaBy02} have derived
by a fluiddynamical approach the cross-diffusion model \eqref{1.eq} with
diffusion matrix
$$
  A(u) = \begin{pmatrix}
  2u_1(1-u_1) - \beta\theta u_1u_2^2 & -2\beta u_1u_2(1+\theta u_1) \\
  -2u_1u_2 + \beta\theta(1-u_2)u_2^2 & 2\beta u_2(1-u_2)(1+\theta u_1)
  \end{pmatrix},
$$
where the parameters $\beta>0$ and $\theta>0$ model the strength of the 
partial pressures. With the entropy \eqref{a.h}, we find that \cite[(32)]{JuSt11}
$$
  h''(u)A(u) = \begin{pmatrix}
	2 & 0 \\ \beta\theta u_2 & 2\beta(1+\theta u_1)
	\end{pmatrix}.
$$
Assuming that $\theta<4\sqrt{\beta}$, it follows for $0\le u_1,u_2\le 1$
and $z\in\R^2$ that
$$
  z^\top h''(u)A(u)z \ge (2-\eps)z_1^2 + 2\beta\bigg(1 - \frac{\beta\theta^2}{8\eps}
	\bigg)z_2^2 \ge \kappa|z|^2,
$$
where $\kappa=\min\{2-\eps,2\beta(1-\beta\theta^2/(8\eps)\}>0$ if we choose
$0<\eps<2$. Then Assumption~A2 is fulfilled with $s_1=s_2=0$,
and Assumption~A6 holds as well since $(h''(u)A(u))_{21}$ is bounded from
above by $\beta\theta$. 

\end{appendix}


\end{document}